\newtheorem{thm}{Theorem}[section]
\newtheorem{lemma}[thm]{Lemma}
\theoremstyle{definition}
\title{Travelling Waves 
in a Mathematical Model \\for 
Oncolytic Virotherapy}
\author{Negar Mohammadnejad \thanks{Department of Mathematical and Statistical Sciences, University of Alberta, Edmonton, AB, Canada, T6G 2R3, \texttt{negar3@ualberta.ca}} , Thomas Hillen \thanks{Department of Mathematical and Statistical Sciences, University of Alberta, Edmonton, AB, Canada, T6G 2R3,  \texttt{thillen@ualberta.ca} (Corresponding author)}}
\date{}
\pgfplotsset{compat=1.18}
\begin{document}

\maketitle

\begin{abstract}
    Oncolytic virotherapy (OVT) is a promising cancer treatment strategy in which engineered viruses selectively infect and destroy tumor cells. Motivated by the biological mechanisms underlying viral spread and tumor invasion into the tissue, we analyze a non-cooperative reaction–diffusion model capturing the invasion of tumor tissue by oncolytic viruses. Using carefully constructed upper and lower solutions together with Schauder’s fixed point theorem, we establish the existence of positive travelling‐wave solutions. In particular, we identify a minimal wave speed value $\bar{c}$
  such that positive travelling waves exist for all $c \ge \bar{c}$. Our analysis also highlights parameter regions where the existence of travelling waves remains ambiguous, suggesting new mathematical questions about the propagation of viral treatments through tumor environments.
\end{abstract}
\textbf{Keywords:} Oncolytic virotherapy, reaction-diffusion equations, traveling wave solutions, positive traveling wave solutions.
\section{Introduction} 

In 1991, the first genetically engineered oncolytic virus (OV)---a thymidine kinase–deficient herpes simplex virus type~1 (HSV-1) mutant---was developed and used to treat malignant glioma in nude mice~\cite{viro_intro}. Oncolytic virotherapy (OVT) has since emerged as a promising cancer treatment strategy in which oncolytic viruses are introduced into a patient to selectively infect and eliminate tumor cells \cite{LinDanni2023Ovbp}. 

Over the last two decades, mathematical models for oncolytic viruses were developed and analysed \cite{WodarzKamorva009,KomorovaWodarz2010,Tian,eftimie,BhattDarshakKartikey2022Mtsd,storey,Jenner2019,MorselliDavid2023AaCM,arwa,alturqwi}.
Besides clinically relevant conclusions on the administration of oncolytic virotherapy, it turned out that the basic oncolytic virus model has very interesting, and non-trivial, mathematical properties. The model is often formulated as a system of reaction-diffuion equations and it can lead to oscillations, spatial pattern formation, invasion fronts, spiral waves  and spatio-temporal chaotic behavior \cite{POOLADVAND2021108520,eftimie,Tejas2025}. 
The analysis of these patterns has mostly focussed on numerical simulations. In the present work, we rigorously establish the existence of travelling wave solutions for the basic oncolytic virus reaction--diffusion model, which describes the spatial interactions between uninfected cancer cells $C$, infected cancer cells $I$, and free virions $V$ on an unbounded spatial domain.

We notice that the three-component reaction-diffusion system is non-cooperative, hence the standard theory for monotone or cooperative systems (e.g.,~\cite{FangJian2009MWfP, LiangXing2007Asos, MaShiwang2001TWfD, WeinbergerHansF.2002Aold, WuJianhong2001TWFo, WangHaiyan2011SSaT_epidemy_travekingwave}) cannot be applied. Instead,  we construct appropriate upper and lower solutions and apply Schauder's fixed point theorem to show existence of travelling waves with speeds in an interval $[\bar c, \infty)$. 

Before presenting our analysis, we begin with a brief overview of existing oncolytic virotherapy (OVT) models, followed by a short introduction to travelling–wave methods. 
In the sections that follow, we introduce the oncolytic virotherapy model studied here in Section~\ref{sec:model} and provide a preliminary analysis. We then state the necessary conditions that allow us to construct the upper and lower solutions in preparation for the main existence result. The proof of the main theorem is completed in Section~\ref{sec:proof}. Finally, in Section~\ref{sec:numerics} we present numerical simulations of the full PDE system and compare them with our analytical findings.

\subsection{Oncolytic Virotherapy Modeling}
Early theoretical work on oncolytic virotherapy (OVT) was shaped by the models of Wodarz and Komarova \cite{Woodarz2002,WodarzKamorva009,KomorovaWodarz2010}, who established a general ODE framework capturing key features of tumor growth and viral spread. Their analyses revealed that OVT dynamics fall into fast- and slow-spread regimes, and that treatment outcomes depend sensitively on assumptions about infection kinetics. These studies provided a foundation for later spatial and PDE-based models of tumor–virus interactions.
Building on these ideas, Tian \cite{Tian} introduced a three-variable ordinary differential system tracking healthy cancer cells, infected cells, and free virus, showing how viral burst size can determine therapeutic success. More detailed spatial models soon followed. Pooladvand et al.\cite{POOLADVAND2021108520} developed a three-dimensional, radially symmetric PDE model for adenovirus therapy in solid tumors, allowing susceptible cells, infected cells, and free virus to diffuse. Their analysis showed that increasing viral infectivity alone does not guarantee tumor elimination—consistent with experimental observations that virotherapy rarely succeeds as a standalone treatment. Their work helped motivate further spatial modelling, including the diffusion-based system of \cite{arwa}, which examined temporal and spatial infection patterns in the absence of immune effects.
Additional computational studies have explored why OVT can fail. Bhatt et al. \cite{BhattDarshakKartikey2022Mtsd} combined immersed-boundary and Voronoi-based simulations to identify key limiting mechanisms: rapid loss of infected cells (leading to premature viral clearance), emergence of virus-resistant tumor cells, and insufficient viral spread rates. Morselli et al. \cite{MorselliDavid2023AaCM} used a stochastic agent-based model to examine how spatial constraints and microenvironmental factors shape viral propagation, comparing random versus pressure-driven viral movement and reproducing infection patterns seen in earlier biological and theoretical studies.
Although these agent-based and high-dimensional spatial models provide rich biological insight, their computational cost often limits extensive analytical investigation. For this reason, reaction–diffusion models remain valuable for uncovering the spatial structure of viral spread and identifying conditions favoring tumor control or eradication.
Baadbulla et al. \cite{arwa} extend the framework of Pooladvand et al. \cite{POOLADVAND2021108520}. Pooladvands original model—formulated in 3-D radial symmetry—displayed transitions to coexistence states and oscillatory dynamics through a Hopf bifurcation, matching oscillations previously reported in experimental systems. Baabdulla et al \cite{arwa} removed the radial symmetry and analyzed the full 2-D and 3-D diffusion problem. They computed invasion speeds numerically, and compared spatial infection profiles. They found  complex spatiotemporal behaviours that are often seen in excitable media \cite{Kuramoto,Sandstede}. Their analysis highlighted how spatial heterogeneity shapes viral spread and helps clarify the circumstances under which OVT can suppress—or nearly eradicate—tumor mass. 

In the absence of oncolytic viruses (OVs), tumors typically develop an immunosuppressive microenvironment that suppresses the host immune response. The introduction of OVs, however, induces pro-inflammatory signalling within the tumor and can substantially alter tumor--immune dynamics. Motivated by the work of Al-Tuwairqi et al.~\cite{alturqwi}, a reaction--diffusion model was proposed in \cite{NegarThomas1} to capture OVT--immune system interactions. This model extends that of Baabdulla et al.~\cite{arwa} by incorporating a fourth variable $Y$ representing the immune response.                                 Their analysis demonstrated the existence of stacked travelling waves and, through the computation of associated wave speeds, indicated that OVT as a monotherapy is unlikely to achieve complete tumor eradication. This was further explored in~\cite{NegarTHomas2}, where strategies to enhance the efficacy of OVT were investigated, with particular emphasis on the importance of combination treatments involving immunotherapies. These studies highlight that explicitly modelling the immune response is essential for understanding the full therapeutic potential of OVT.

Eftimie et al.~\cite{eftimie} developed an ODE model incorporating both lymphoid and peripheral immune compartments to describe the interactions among uninfected and infected tumor cells, memory and effector immune cells, and two viral types (Ad and VSV). Their model successfully reproduced experimental tumor growth and immune-response patterns, identifying conditions under which sustained tumor elimination is possible. Importantly, they showed that viral persistence alone cannot clear the tumor without a coordinated anti-tumor immune response.

Storey et al.~\cite{storey} further distinguished the roles of innate and adaptive immunity in OVT, noting that the innate response may act too rapidly and clear the virus before sufficient tumor infection occurs. To overcome this limitation, combination therapies pairing OVT with PD-1/PD-L1 checkpoint inhibition have been explored. Such approaches mitigate T-cell exhaustion, enhance immune-mediated tumor clearance, and reduce the viral infectivity threshold required for therapeutic success. Their findings also underscore the importance of treatment scheduling: administering a second viral dose too early can redirect the immune response toward viral, rather than tumor, elimination, thereby reducing overall therapeutic efficacy.

 \subsection{Travelling Waves} 

Modeling with partial differential equations (PDEs)—and reaction–diffusion equations in particular—has had a profound influence on the mathematical study of biological populations, with significant applications in cell biology \cite{SherrattJA1992Ewht_murray_cell_traveling, LagergrenJohnH.2020Bnng_Cell_travelingwave}, ecology \cite{SkellamJ.G.1991Rdit_ecology_travelingwave, LevinSimonA.2003TEAE_ecology_travelingwave}, and epidemiology \cite{ZhangTianran2014Eotw_wang_epidomy_travelingwave, WangHaiyan2011SSaT_epidemy_travekingwave}. Many works (e.g., \cite{murray, Britton, volpretVolpretVolpret, smoller_alma991043704819909116}) have investigated travelling waves and spatial dynamics for $n$-dimensional $(n\ge 1)$ reaction–diffusion systems of the form
\begin{align}\label{base_diffusion}
    \frac{\partial u_i}{\partial t}
    = D_i \Delta u_i + f_i(u_1,\ldots,u_n),
    \qquad t \ge 0,\ x \in \mathbb{R},
\end{align}
for $1\le i\le n$. Here $u_i(x,t)$ denotes the density of species of type $i=1,\dots,n$, $D_i$ denotes their diffusion coefficient, and $f_i(u_1,\dots,u_n)$ describes their birth, death, and other interactions.  
Such models give rise to travelling–wave solutions that represent invasion fronts. The most prominent model arises for one species $n=1$ and is known as the Fisher–KPP model \cite{murray, Britton}. It is a classical and widely studied prototype for biological invasions and describes the spatiotemporal dynamics of a population whose individuals migrate via linear diffusion and proliferate according to logistic-type  growth.

In cancer biology, reaction–diffusion frameworks have been especially useful in describing tumor invasion, particularly when the spread of malignant cells is coupled to degradation of surrounding tissue \cite{El-HachemMaud2021Twao_travelingwave}.  

The system \eqref{base_diffusion} is called \emph{cooperative} if each $f_i(u)$ is non-decreasing in all components of $u$, with the possible exception of the $i$th one. Cooperative systems have been extensively studied, and a general travelling–wave theory is available in this setting \cite{FangJian2009MWfP, WeinbergerHansF.2002Aold, LiangXing2007Asos}.

For \emph{non-cooperative} systems—characterized by kinetic Jacobians that include negative off-diagonal entries—this general theory does not apply. The model studied in this work (system \eqref{eqn:mainPDE}) is non-cooperative and the above mentioned theory is not available. Instead we follow the framework of \cite{ThiessenRyan2026Twst} to construct upper and lower solutions that connect correctly to the left boundary and remain uniformly bounded throughout the domain. This framework then enables the application of Schauder’s fixed-point theorem, yielding the existence of travelling–wave solution for speeds in an interval $[\bar c, \infty)$.

The non-monotonicity of our system introduces significant technical challenges and restricts us to parameter regimes where existence can be rigorously established. When determining the minimal wave speed, we encounter parameter regions in which the existence—or nonexistence—of travelling waves cannot be conclusively determined. Such indeterminate parameter zones are common in travelling–wave analysis and motivate future work aimed at resolving these ``foggy'' regimes where classical arguments do not decisively settle the question of the wave's existence.

\section{The mathematical model}\label{sec:model}
In this section, we will study the A. Baabdulla \cite{arwa} reaction diffusion
model on a rectangular or smooth domain $\Omega \subset \mathbb{R}^n$.\\
The proposed model from \cite{arwa} is given by 
\begin{align}
\label{eqn:main_PDE}
\notag
\frac{dC}{dt}=&D \Delta C 
+ r\,C\left(1-\frac{C+I}{L}\right)
- \beta C V,\\
\frac{dI}{dt}=&D \Delta I
+ \beta C V 
- \eta I,\\ \notag
\frac{dV}{dt}=&D_V \Delta V 
+ \eta b\, I
- \omega V.
\end{align}

In this formulation, \(C\) denotes the population of uninfected cancer cells, \(I\) the population of infected cancer cells, and \(V\) the concentration of free oncolytic virus.  

We consider system \eqref{eqn:main_PDE} together with the initial conditions
\begin{align*}
    C(x,0)=C_0>0, \qquad 
    I(x,0)=I_0=0, \qquad 
    V(x,0)=V_0>0,
\end{align*}
and homogeneous Neumann boundary conditions on $\partial \Omega$,
\begin{align*}
    n \cdot \nabla C = 
    n \cdot \nabla I = 
    n \cdot \nabla V = 0 
    \qquad \forall\, x \in \partial \Omega.
\end{align*}

In the system of equations~\eqref{eqn:main_PDE}, the diffusion coefficients of the susceptible and infected tumor cell populations are denoted by $D$, while the free virus diffuses with coefficient $D_V$. The operator $\Delta$ denotes the Laplacian, i.e.\ the sum of all second–order spatial derivatives.

In the first equation, the second term represents the logistic proliferation of the uninfected cancer cells in the absence of viral therapy, while the third term, $-\beta C V$, models the loss of susceptible tumor cells due to infection by free virus. Free virions interact with uninfected cancer cells and transfer them into the infected class. Consequently, the same interaction appears with opposite sign in the second equation as the term $+\beta C V$, which describes the influx of cells into the infected population. Thus, the second equation accounts for the accumulation of infected tumor cells through infection, while the term $-\eta I$ represents their removal due to lysis.

In the third equation, the term $\eta b I$ incorporates the burst size $b$, describing the number of viral particles released upon lysis of infected cells, and the final term, $-\omega V$, corresponds to the natural clearance rate of free virus.

To simplify our analysis, we apply the non-dimensionalization method by considering
\begin{align*}
    \hat{C}=\frac{C}{L}, \quad \hat{I}=\frac{I}{L}, \quad\hat{V}=\frac{\beta V}{r},\quad \hat{t}=r t,\quad \hat x=  x\sqrt{ \frac{r}{D_V}} \  \text{, and} \quad \hat D= \frac{D}{D_V}.
\end{align*} 
After dropping the hats, we get the non-dimensional system:

 \begin{align}
\label{eqn:immune}
\notag
\frac{dC}{dt}=&D \Delta C+{C}\left(1-C-I\right)- {C}{V},\\ \notag
\frac{dI}{dt}=& D \Delta I+CV- a I,\\
\frac{dV}{dt}=& \Delta V+\theta I-\gamma V,
\end{align}

where 
\begin{align*}
 a= \frac{\eta}{r},\quad \theta= \frac{\eta b \beta L }{r^2},\quad \gamma= \frac{\omega}{r}.\end{align*}


\subsection{Analysis of the model}
We initially examine the aforementioned model in the spatially homogeneous scenario,
where all spatial dependencies are disregarded. Consequently, the system governing
$C(t), I(t), V (t)$ simplifies to:

 \begin{align}
\label{eqn:base model}
\notag
\frac{dC}{dt}=&{C}\left(1-C-I\right)- {C}{V},\\ 
\frac{dI}{dt}=& CV- a I, \\ \notag
\frac{dV}{dt}=& \theta I-\gamma V.
\end{align}

We obtain the following equilibrium points for system \eqref{eqn:base model}
\begin{align*}
    E_0=&(0,0,0),\\
    E_1=&(1,0,0),\\ E_2=&(C^*,I^*,V^*)=\left(\frac{a \gamma }{\theta}, \frac{\gamma( \theta-a \gamma )}{\theta(\gamma +\theta)}, \frac{\theta-a \gamma}{\gamma+\theta}\right).
\end{align*}

The analysis of the equilibrium points $E_0$, $E_1$, and $E_2$ was carried out in detail in \cite{arwa}. We show a bifurcation diagram in Figure \ref{fig:bifurcation} with $\theta$ as bifurcation parameter. For low values of $\theta$, the cancer only state $E_1$ is stable, and by increasing $\theta$, it undergoes a transcritical bifurcation at $\theta_t = \gamma a$, and the coexistence state, $E_2$, becomes biologically relevant and stable. Upon further increase of $\theta$, we obtain a Hopf bifurcation at $\theta_H$ with periodic orbits arising for $\theta>\theta_H$. For our further analysis, we assume the coexistence equilibrium exists, hence we assume throughout: 
\begin{equation}\label{theta>agamma}
\theta > \gamma a.
\end{equation}
\color{black}

\begin{figure}
    \centering
    \includegraphics[width=0.5\linewidth]{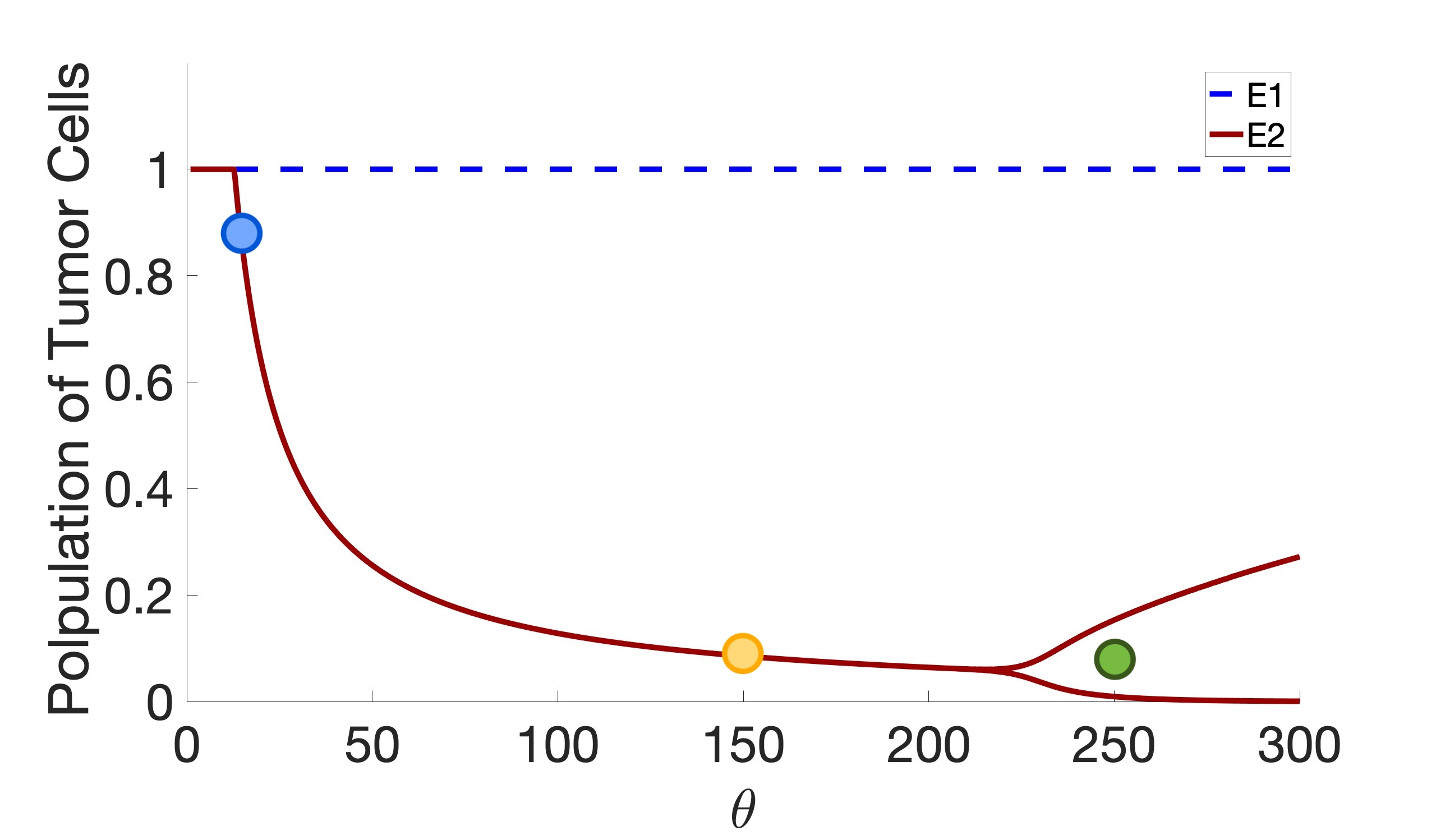}
 \caption{
Bifurcation diagram of the model~\eqref{eqn:base model} with bifurcation parameter $\theta$. 
The equilibrium $E_{1}$ loses stability at $\theta \approx 12.7$, while $E_{2}$ becomes 
biologically relevant and remains stable up to the Hopf bifurcation at 
$\theta_{H} \approx 220$. Beyond this point, the system exhibits oscillatory dynamics. 
The parameter values used here are $a = 0.96$ and $\gamma = \frac{40}{3}$. 
The circles mark the $\theta$ values selected in Section~\ref{sec:numerics}
for numerical simulations.
}
    \label{fig:bifurcation}
\end{figure}

\section{Travelling wave existence}
In our main result, Theorem~\ref{mainTHeorem}, we establish conditions under which the system
\begin{align}
\label{eqn:mainPDE}
\notag
\frac{dC}{dt}=& D \Delta C + C(1 - C - I) - C V,\\
\frac{dI}{dt} =& D \Delta I + C V - a I,\\
\notag
\frac{dV}{dt} =& \Delta V + \theta I - \gamma V,
\end{align}
admits travelling wave solutions on $-\infty<x<\infty$.   

\subsection{The travelling wave problem}
Since our interest lies in describing the invasion of the viral treatment into an established tumor, it is convenient to shift the equilibrium corresponding to the untreated cancer to the origin. To do so, we introduce the change of variables
\[
B \coloneqq 1 - C.
\]
Under this transformation, the relevant equilibrium $E_1$ becomes $(0,0,0)$, allowing us to apply standard techniques for studying invasion into the ``disease-free'' state $(0,0,0)$. Substituting $C = 1 - B$ into \eqref{eqn:mainPDE} yields the equivalent system
\begin{align}
\label{eqn:mainPDE_B}
\notag
\frac{dB}{dt} &= D \Delta B + (B-1)(B-I) + (1-B)V,\\
\frac{dI}{dt} &= D \Delta I + (1-B)V - a I,\\
\notag
\frac{dV}{dt} &= \Delta V + \theta I - \gamma V.
\end{align}

To derive the travelling wave equations, we introduce the coordinates $\xi = x + ct$. A travelling wave solution has the form
\[
(B(x,t), I(x,t), V(x,t)) = (B(\xi), I(\xi), V(\xi)),
\]
and substituting this into \eqref{eqn:mainPDE_B} leads to the ODE system
\begin{align}
\label{ODE_B}
c B' &= D B'' - B + B^2 - B I - B V + I + V, \nonumber\\
c I' &= D I'' - a I + (1-B)V,\\
c V' &= V'' + \theta I - \gamma V. \nonumber
\end{align}

The boundary conditions require special attention. At the leading edge of the wave, corresponding to the pre-invasion cancer state, we impose
\[
(C(-\infty), I(-\infty), V(-\infty)) = (1,0,0).
\]
Behind the wave, i.e.\ as $\xi \to \infty$, several behaviours are possible. The solution may converge to the equilibrium $E_2$, but as illustrated in Figure~\ref{fig:bifurcation}, $E_2$ undergoes a Hopf bifurcation for sufficiently large~$\theta$. In such cases,  the wave approaches a periodic orbit surrounding $E_2$ rather than the equilibrium itself. Accordingly, we impose the general boundedness condition
\[
\|(C(\infty), I(\infty), V(\infty))\| < \infty.
\]
The associated boundary conditions, expressed in $(B,I,V)$ variables, are
\begin{equation}\label{boundarycondition}
(B(-\infty),\, I(-\infty),\, V(-\infty)) = (0,0,0),
\qquad \text{and} \qquad
\bigl\|(B(\infty),\, I(\infty),\, V(\infty))\bigr\| < \infty.
\end{equation}

From this point forward, we work exclusively with the transformed system \eqref{ODE_B}, as it simplifies the analytical treatment of the travelling wave problem.

 The system \eqref{ODE_B} is a second-order inhomogeneous ODE and we find an alternative formulation using the method of variation of parameters. This transformation has shown to be useful for travelling wave analysis in the work of Fang and Zhao \cite{FangJian2009MWfP}. We define $U\coloneqq(B,I,V)$, matrix $\mathcal{D}$, and the function $f$ as  \begin{align}\label{f_equation}f(U)= f(B,I,V)=\begin{pmatrix}
    f_1(B,I,V)\\  f_2(B,I,V)\\  f_3(B,I,V)
\end{pmatrix}= \begin{pmatrix}
    -B+B^2-BI-BV+I+V\\ -a I+(1-B)V\\ \theta I-\gamma V
\end{pmatrix}, \ \mathcal{D}\coloneqq\begin{pmatrix}
    D &0&0\\ 0& D&0 \\ 0&0& 1 
\end{pmatrix}, \end{align} 
so system \eqref{ODE_B} can be written as  
\begin{align} \label{eq:short1}
    \mathcal{D}U''-cU'=-f(U).\end{align}
    
Let \begin{align}
    F(U)=\begin{pmatrix}
    F_1(U)\\  F_2(U)\\  F_3(U)
\end{pmatrix}= \alpha U+f(U),
\end{align}  where $\alpha$ is a positive free parameter. This definition allows us to rewrite \eqref{eq:short1} as \begin{align}\label{eq:short2}
    \mathcal{D}U''-cU'- \alpha U=-F(U).\end{align} 
    
    We apply the variation of parameters such that 
    \begin{align} \label{c_operator}
        \nonumber B(\xi)&=\dfrac{1}{\sqrt{c^2+4 \alpha D}} \left( \int_{-\infty}^{\xi} e^{(\xi- \eta)\delta_1} F_1(B(\eta),I(\eta),V(\eta))\ d \eta +  \int_{\xi}^{\infty} e^{(\xi- \eta)\delta_2} F_1(B(\eta),I(\eta),V(\eta))\ d \eta \right),\end{align} 
        where 
     \begin{align}
    \delta_1=\dfrac{c- \sqrt{c^2+4 \alpha D}}{2 D},\quad \delta_2=\dfrac{c+ \sqrt{c^2+4 \alpha D}}{2 D}.\end{align}
     
     \begin{align}\label{I_operator} I(\xi)&=\dfrac{1}{\sqrt{c^2+4 \alpha D}} \left( \int_{-\infty}^{\xi} e^{(\xi- \eta)\sigma_1} F_2(B(\eta),I(\eta),V(\eta))\ d \eta +  \int_{\xi}^{\infty} e^{(\xi- \eta)\sigma_2} F_2(B(\eta),I(\eta),V(\eta))\ d \eta \right),\end{align} where \begin{align} \label{I_operator2}\sigma_1=\dfrac{c- \sqrt{c^2+4 \alpha D}}{2 D},  \quad \sigma_2=\dfrac{c+ \sqrt{c^2+4 \alpha D}}{2 D}. \end{align}\begin{align} \label{V_operator} V(\xi)&=\dfrac{1}{\sqrt{c^2+4 \alpha }} \left( \int_{-\infty}^{\xi} e^{(\xi- \eta)\zeta_1} F_3(B(\eta),I(\eta),V(\eta))\ d \eta +  \int_{\xi}^{\infty} e^{(\xi- \eta)\zeta_2} F_3(B(\eta),I(\eta),V(\eta))\ d \eta \right),
    \end{align} where 
    \begin{align} \label{V_operator2}\zeta_1=\dfrac{c- \sqrt{c^2+4 \alpha }}{2 }, \quad \zeta_2=\dfrac{c+ \sqrt{c^2+4 \alpha }}{2 } .\end{align}

These integral formulations will allow us to define an operator framework, the fixed points of which correspond to traveling wave solutions.

\subsection{Dispersion relation for the invasion wave speed}
    Before stating the main theorem, we first analyze the linearization of \eqref{ODE_B} in order to characterize the invasion speed and the exponential growth rate at the leading edge of the wave.  
This analysis leads to a set of conditions that must be satisfied for traveling wave solutions to exist.

We linearize the system \eqref{ODE_B} about the $(0,0,0)$ equilibrium \begin{align}
    c B'&= DB''-B+I+V,\\ cI'&= DI'' -a I+V, \\ cV'&= V'' +\theta I - \gamma V.   
\end{align}

At the leading edge of the wave, we assume exponential growth as $B= {\nu}_be^{\rho \xi}$, $I=\nu_i e^{\rho \xi}$, and $V= \nu_v e^{\rho \xi}$ for a constant vector $\nu=(\nu_b,\nu_i,\nu_v)$, and a growth rate $\rho>0$. Substituting the solution into the above equation yields the algebraic system \begin{align}
    c\rho \nu =\underbrace{\begin{pmatrix}
        D & 0& 0 \\ 0 & D &0 \\ 0 & 0 &1
    \end{pmatrix}}_{\eqqcolon \mathcal{D}} \rho^2 \nu + \underbrace{\begin{pmatrix}
        -1 &1&1\\ 0&-a &1\\ 0&\theta & -\gamma
    \end{pmatrix}}_{= f'(0,0,0)} \nu,
\end{align} where $f'(0,0,0)$ is the Jacobian of $f$ from \eqref{f_equation} at equilibrium $(0,0,0)$. Defining the matrix $A(\rho)= \mathcal D \rho^2+f'(0,0,0)$, we retrieve an eigenvalue problem 
\begin{align}
    (A(\rho)- c \rho \mathbb{I})\nu= 0, 
\end{align} with the eigenvalue \begin{align}
    \lambda= c \rho.
\end{align} 

We explicitly compute the eigenvalues and the eigenvectors of the matrix $A$. The matrix $A$ is given by \begin{align}A(\rho)=
    \begin{pmatrix}
        D \rho^2 -1 &+1 &+1 \\ 0 & D \rho^2 -a &1 \\ 0 & \theta & \rho^2 -\gamma
    \end{pmatrix}.
\end{align}

Thus, the eigenvalues are given by \begin{align}\label{lambdas}\notag \lambda_{1}&= D \rho^2-1, \\  \lambda_{2,3}&= \frac{\rho^2+D \rho^2-a- \gamma \pm \sqrt{(\rho^2+ D \rho^2-a- \gamma)^2- 4(D \rho^4- D \rho^2 \gamma- a \rho^2+ a \gamma- \theta)}}{2}\\ \notag &= \frac{\rho^2+D \rho^2-a- \gamma \pm \sqrt{(\rho^2- D \rho^2+a- \gamma)^2+ 4 \theta}}{2},
\end{align} and the eigenvectors

\begin{align}\label{nu_s}
    \nu_1= \begin{pmatrix}
        1\\0\\0
    \end{pmatrix}, \ \nu_2= \begin{pmatrix}
       \dfrac{(\lambda_2+1+a-D \rho^2)}{(\lambda_2+a-D \rho^2 )(\lambda_2+1-D\rho^2 )} \\ \dfrac{1}{\lambda_2+a-D \rho^2} \\ 1
    \end{pmatrix}, \ \nu_3= \begin{pmatrix}
       \dfrac{(\lambda_3+1+a-D \rho^2)}{(\lambda_3+a-D \rho^2 )(\lambda_3+1-D\rho^2 )} \\ \dfrac{1}{\lambda_3+a-D \rho^2} \\ 1
    \end{pmatrix}.
\end{align}

The eigenvalue $\lambda_{1}(\rho)$ in \eqref{lambdas} has the associated eigenvector 
$\nu_{1} = (1,0,0)$, which corresponds to a direction purely in the cancer component: it involves neither infected cells nor virus. In fact, $\lambda_{1}$ is determined entirely by the $B$–equation and represents the spatial decay rate of the stable manifold of the classical Fisher--KPP equation at the equilibrium $C=1$ (equivalently, $B=0$). It is well known that this mode cannot generate a bounded travelling wave solution, and we therefore exclude this case from further consideration (see, for example, \cite{Britton, murray}).

Since our objective is to analyse viral invasion into a cancer population, we restrict attention to the remaining two eigenvalues, $\lambda_{2}(\rho)$ and $\lambda_{3}(\rho)$. Because $\lambda_{2} > \lambda_{3}$ for all $\rho>0$, the relevant branch for constructing travelling–wave solutions is $\lambda_{2}$.

The wave speed associated with the eigenvalue $\lambda_{2}(\rho)$ is
\[
    S(\rho) = \frac{\lambda_{2}(\rho)}{\rho}
    = \frac{\rho^2 + D\rho^2 - a - \gamma}{2\rho}
    + \frac{\sqrt{(\rho^2 - D\rho^2 + a - \gamma)^2 + 4\theta}}{2\rho}.
\]
We then introduce the quantity
\begin{align} \label{barc definiton}
    c_m = \inf_{\rho>0} S(\rho),
\end{align}
which represents the minimum of this speed curve. 
    
    \begin{lemma}
        The speed curve $S(\rho)$ decreases as $\rho$ is close to zero and tends to infinity as $\rho \to 0$. We have $\lim_{\rho \to \infty}S(\rho)= +\infty$ and ${S}(\rho)$ has at least one global minimum. 
    \end{lemma}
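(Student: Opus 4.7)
The plan is to pull out a factor $1/(2\rho)$ from $S(\rho)$ and analyse the resulting numerator, writing
\[
S(\rho) = \frac{g(\rho)}{2\rho}, \qquad g(\rho) := (1+D)\rho^2 - (a+\gamma) + \sqrt{\bigl((1-D)\rho^2 + (a-\gamma)\bigr)^2 + 4\theta}.
\]
Since the radicand is bounded below by $4\theta>0$, the function $g$ is smooth on $[0,\infty)$ and $S$ is smooth on $(0,\infty)$, so the limits at the two endpoints together with a Taylor expansion of $g$ near $0$ will give all four assertions.

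First I would compute $g(0) = -(a+\gamma) + \sqrt{(a-\gamma)^2 + 4\theta}$ and observe that the coexistence hypothesis \eqref{theta>agamma} is equivalent to $g(0)>0$, because
\[
(a-\gamma)^2 + 4\theta > (a-\gamma)^2 + 4 a\gamma = (a+\gamma)^2 \iff \theta > a\gamma.
\]
Consequently $g(\rho) \to g(0)>0$ as $\rho\to 0^+$ while $2\rho\to 0^+$, giving $\lim_{\rho\to 0^+} S(\rho)=+\infty$. For the behaviour as $\rho\to\infty$, I would note that $\bigl((1-D)\rho^2+(a-\gamma)\bigr)^2 \sim (1-D)^2\rho^4$, so $g(\rho)\sim (1+D)\rho^2 + |1-D|\rho^2 = 2\max(1,D)\rho^2$ and therefore $S(\rho)\sim \max(1,D)\rho\to +\infty$.

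For the monotonicity close to $\rho=0$, I would Taylor-expand $g$ around the origin to get $g(\rho)=g(0)+c_1\rho^2+O(\rho^4)$ for an explicit constant $c_1$. Then
\[
S(\rho) = \frac{g(0)}{2\rho} + \tfrac{1}{2}c_1\rho + O(\rho^3), \qquad S'(\rho) = -\frac{g(0)}{2\rho^2} + \tfrac{1}{2}c_1 + O(\rho^2),
\]
and since $g(0)>0$ the singular negative term dominates, so $S'(\rho)<0$ on some right neighbourhood $(0,\epsilon)$. Existence of a global minimum then follows from the extreme value theorem: fixing any $\rho_0\in(0,\infty)$, the two limits allow me to pick $0<\epsilon<\rho_0<M$ such that $S(\rho)>S(\rho_0)$ outside $[\epsilon,M]$, and the minimum of $S$ on the compact interval $[\epsilon,M]$ is automatically the global minimum on $(0,\infty)$.

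The only step that requires genuine input beyond elementary asymptotics is the equivalence $g(0)>0\iff \theta>a\gamma$; this is where the standing assumption \eqref{theta>agamma} enters essentially and ensures that $c_m$ in \eqref{barc definiton} is a finite positive number. Everything else reduces to a routine comparison of leading-order terms.
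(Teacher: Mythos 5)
Your proposal is correct, and its overall skeleton (compute the two endpoint limits, then invoke continuity on a compact subinterval to extract a global minimizer) matches the paper's argument. However, you supply two pieces of genuine content that the paper's proof omits or gets slightly wrong. First, the paper claims the limit $\lim_{\rho\to 0^+}S(\rho)=+\infty$ ``by L'H\^opital's rule,'' but the quotient $\lambda_2(\rho)/\rho$ is not an indeterminate form: the numerator tends to the finite constant $\lambda_2(0)=\tfrac12\bigl(-(a+\gamma)+\sqrt{(a-\gamma)^2+4\theta}\bigr)$, and the limit is $+\infty$ precisely because this constant is positive. Your observation that $g(0)>0\iff\theta>a\gamma$ is exactly the missing justification, and it correctly identifies where the standing assumption \eqref{theta>agamma} enters --- without it the limit would be $-\infty$ and the lemma would fail. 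Second, the paper never actually proves the stated claim that $S$ \emph{decreases} near $\rho=0$ (a limit of $+\infty$ alone does not preclude oscillation); your expansion $S'(\rho)=-g(0)/(2\rho^2)+\tfrac12 c_1+O(\rho^2)<0$ settles this. Your asymptotics $S(\rho)\sim\max(1,D)\,\rho$ at infinity, including the degenerate case $D=1$, are also correct. In short: same route, but your version is the complete one.
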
 
  \begin{proof}
      The limit $\displaystyle \lim_{\rho \to 0} S(\rho) = \infty$ follows by applying \textit{L'Hôpital's rule} to the defining expression of $S(\rho)$.
Moreover, since $\displaystyle \lim_{\rho \to \infty} S(\rho) = \infty$ and $S$ is continuous on $(0,\infty)$, it follows that
$S$ admits at least one global minimizer ${\rho}_m \in (0,\infty)$ such that
\begin{align}
    S({\rho}_m) = \inf_{\rho>0} S(\rho) =: {c}_m.
\end{align}
If multiple minimizers exist, we select the smallest $\rho$ and denote it again by
\begin{align}\label{rho_m_definiton}
    {\rho}_m = \min\{\rho_i : S(\rho_i) = {c}_m\}.
\end{align}
Hence, the minimal value ${c}_m$ is attained at $\rho ={\rho}_m$.
 \end{proof}
 In the following lemma, we show that $\lambda_2$ is positive. This will show that the function $S(\rho)$ is positive and the minimum wave speed has positive values.

\begin{lemma}
  Assume \eqref{theta>agamma}. The eigenvalue  $\lambda_2(\rho)$ is positive for $\rho>0$. 
\end{lemma}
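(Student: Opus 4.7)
The plan is to view $\lambda_2$ and $\lambda_3$ as the eigenvalues of the $2\times 2$ sub-matrix
\[
M(\rho) \;=\; \begin{pmatrix} D\rho^2 - a & 1 \\ \theta & \rho^2 - \gamma \end{pmatrix}
\]
obtained by stripping off the uncoupled first row and column of $A(\rho)$. Their trace and determinant are $T(\rho) = (D+1)\rho^2 - a - \gamma$ and $\Delta(\rho) = (D\rho^2 - a)(\rho^2 - \gamma) - \theta$. Since $\lambda_2 \ge \lambda_3$ with $\lambda_2 + \lambda_3 = T(\rho)$ and $\lambda_2\lambda_3 = \Delta(\rho)$, the sign of $\lambda_2$ is controlled by a standard trace--determinant analysis.

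If $\Delta(\rho)<0$, the two eigenvalues have opposite signs and $\lambda_2(\rho)>0$ automatically. The remaining task is to show that whenever $\Delta(\rho)\ge 0$, we also have $T(\rho)>0$, which forces both eigenvalues to be positive. To this end I would split on the signs of the two factors of $\Delta(\rho)+\theta = (D\rho^2-a)(\rho^2-\gamma)$. If these factors have opposite signs, or one vanishes, their product is non-positive, so $\Delta(\rho)\le -\theta<0$, contradicting $\Delta(\rho)\ge 0$.

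The remaining possibility of both factors being strictly negative is where the standing hypothesis \eqref{theta>agamma} enters. In that regime $0<a-D\rho^2<a$ and $0<\gamma-\rho^2<\gamma$, so
\[
(a-D\rho^2)(\gamma-\rho^2) \;<\; a\gamma \;<\; \theta,
\]
which again gives $\Delta(\rho)<0$, a contradiction. Thus $\Delta(\rho)\ge 0$ forces both factors to be strictly positive, and $T(\rho)=(D\rho^2-a)+(\rho^2-\gamma)>0$ follows immediately, completing the argument.

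The whole proof is elementary sign analysis; no genuine obstacle appears. The only place where the biologically motivated condition $\theta>a\gamma$ is used is in excluding the ``awkward'' sub-case in which both diagonal entries of $M(\rho)$ are negative, and indeed the conclusion would fail without this assumption (e.g.\ at $\rho=0$ one recovers $\lambda_2(0)=0$ precisely when $\theta=a\gamma$, corresponding to the transcritical bifurcation in Figure~\ref{fig:bifurcation}). As a side benefit the same analysis yields $S(\rho)=\lambda_2(\rho)/\rho>0$ for $\rho>0$, so that the minimum wave speed $c_m$ defined in \eqref{barc definiton} is strictly positive.
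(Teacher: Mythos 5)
Your proof is correct, and it takes a genuinely different route from the paper. The paper works directly with the closed-form expression for $\lambda_2$, bounds $4\theta$ below by $4a\gamma$ under the square root, and then splits into cases according to the sign of the trace-like quantity $\rho^2+D\rho^2-a-\gamma$, with a further sub-split on $D<1$, $D=1$, $D\ge 1$ to verify the squared inequality $a+D(\gamma-\rho^2)>0$. You instead observe that $\lambda_2,\lambda_3$ are the (real, since the discriminant contains $+4\theta>0$) eigenvalues of the lower-right $2\times 2$ block
\[
M(\rho)=\begin{pmatrix} D\rho^2-a & 1\\ \theta & \rho^2-\gamma\end{pmatrix},
\]
and run a trace--determinant argument: if $\det M<0$ the eigenvalues have opposite signs and the larger one is positive; if $\det M\ge 0$ you show both diagonal entries must be positive (the mixed-sign case gives $\det M\le-\theta<0$, and the both-negative case gives $(a-D\rho^2)(\gamma-\rho^2)<a\gamma<\theta$, again $\det M<0$), whence $\operatorname{tr}M>0$ and $\lambda_2\ge \operatorname{tr}M/2>0$. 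This avoids all manipulation of the radical and all case distinctions on $D$, uses the hypothesis $\theta>a\gamma$ in exactly one visibly necessary spot, and, as you note, makes the sharpness at $\theta=a\gamma$ (where $\lambda_2(0)=0$, the transcritical bifurcation) transparent. The one point worth stating explicitly rather than leaving implicit is that the eigenvalues of $M(\rho)$ are real, so that ``$\det M<0$ implies opposite signs'' applies; this is immediate from the discriminant $(\rho^2-D\rho^2+a-\gamma)^2+4\theta>0$. Your closing remark that $S(\rho)>0$ pointwise should be paired with the paper's lemma that the infimum defining $c_m$ is attained, in order to conclude $c_m>0$; pointwise positivity alone does not bound an infimum away from zero.
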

\begin{proof}
    Using $\theta> a\gamma$, we prove that \begin{align}\label{equivalent_inequality}
            \lambda_2&=  \frac{\rho^2+D \rho^2-a- \gamma}{2} +\dfrac{\sqrt{(\rho^2- D \rho^2+a- \gamma)^2+ 4 \theta}}{2}\\& \geq  \frac{\rho^2+D \rho^2-a- \gamma}{2} +\dfrac{ \sqrt{(\rho^2- D \rho^2+a- \gamma)^2}}{2}.
        \end{align}
Thus, we need to prove 
\begin{align}
    \frac{\rho^2+D \rho^2-a- \gamma + \sqrt{(\rho^2- D \rho^2+a- \gamma)^2+4a \gamma}}{2}>0 \quad \text{for } \quad \rho>0.
\end{align}
We first note that when $D=1$, we have 
\begin{align}\label{D=1}
   \lambda_2> \frac{\rho^2+ \rho^2-a- \gamma + \sqrt{(a- \gamma)^2+4a \gamma}}{2}=\frac{2\rho^2-a- \gamma + \sqrt{(a+ \gamma)^2}}{2}= \rho^2>0.
\end{align}
For $D \ne 1$,
two possible cases exist that need to be handled separately.
\begin{enumerate}
    \item $\rho^2+D \rho^2-a- \gamma>0$, \item $\rho^2+D \rho^2-a- \gamma<0$.
\end{enumerate}

In the first case, both terms in \eqref{equivalent_inequality} are positive, which implies directly that $\lambda_2>0$.  
In the second case, noting that $D>0$, we can rewrite the inequality as  
\begin{align} \label{Golden_assumption}
    \rho^2 + D\rho^2 - a - \gamma < 0 
    \qquad\Longleftrightarrow\qquad 
    0 < D\rho^2 < a + \gamma - \rho^2.
\end{align}
Under the assumption \eqref{Golden_assumption}, we now determine conditions guaranteeing $\lambda_2>0$.

To ensure that 
\begin{align}
    \lambda_2
    =\frac{ \sqrt{(\rho^2 - D\rho^2 + a - \gamma)^2 + 4a\gamma}
        + (\rho^2 + D\rho^2 - a - \gamma)}{2}
    >0,
\end{align}
we require
\begin{align}
    \sqrt{(\rho^2 - D\rho^2 + a - \gamma)^2 + 4a\gamma}
    > -(\rho^2 + D\rho^2 - a - \gamma).
\end{align}
Squaring both sides yields
\begin{align}
    (\rho^2 - D\rho^2 + a - \gamma)^2 + 4a\gamma
    &> (\rho^2 + D\rho^2 - a - \gamma)^2.
\end{align}
Expanding and simplifying, we obtain the required condition
\begin{align} \label{lambda2_requirement}
    a + D\gamma - D\rho^2 > 0
    \qquad\Longleftrightarrow\qquad
    a + D(\gamma - \rho^2) > 0.
\end{align}

We now show that this inequality holds for all $\rho>0$ in the two cases $D<1$ and $D\ge 1$.

Since \eqref{Golden_assumption} implies $a+\gamma-\rho^2>0$, if $D<1$, then immediately
\begin{align}\label{Di<1}
    a + D(\gamma - \rho^2) > 0.
\end{align}

If $D\ge 1$, then from \eqref{Golden_assumption} we have
\[
    D\rho^2 < a+\gamma-\rho^2.
\]
Since $D\rho^2$ is smaller than $a+\gamma-\rho^2$, substituting the larger value into \eqref{lambda2_requirement} yields
\begin{align}
    0 < a + D\gamma - (a + \gamma - \rho^2)
    < a + D\gamma - D\rho^2.
\end{align}
Simplifying, we obtain
\begin{align}\label{Di>1}
    a + D\gamma - a - \gamma + \rho^2
    = \gamma(D-1) + \rho^2 > 0,
\end{align}
which is always positive for $D\ge1$ .

Thus, by \eqref{D=1}, \eqref{Di<1} and \eqref{Di>1}, we conclude that
\begin{align}
    \lambda_2 > 0 
    \qquad\text{for all } \rho>0.
\end{align}
\end{proof}

We also prove an estimate for $\lambda_2$ in the following Lemma, which will be used in the proof of the main theorem.

\begin{lemma} \label{estimate_lambda_lemma}
  Assume \eqref{theta>agamma}.  For all $\rho>0$, we have that $\lambda_2 > D \rho^2-a$.
    \end{lemma}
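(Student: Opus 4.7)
The plan is to reduce the claimed inequality to a pure algebraic statement about a square root and then split on the sign of the expression inside to remove the square root safely. Starting from the explicit formula
\begin{align*}
\lambda_2 = \frac{\rho^2+D\rho^2-a-\gamma+\sqrt{(\rho^2-D\rho^2+a-\gamma)^2+4\theta}}{2},
\end{align*}
I would rearrange $\lambda_2 > D\rho^2 - a$ by multiplying through by $2$ and isolating the square root, which after cancellation gives the equivalent inequality
\begin{align*}
\sqrt{(\rho^2-D\rho^2+a-\gamma)^2+4\theta} \;>\; -(\rho^2-D\rho^2+a-\gamma).
\end{align*}
To keep the bookkeeping light, I would introduce the shorthand $X \coloneqq \rho^2-D\rho^2+a-\gamma$, so the target becomes $\sqrt{X^2+4\theta}>-X$.

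The argument then splits naturally into two cases based on the sign of $X$. If $X\ge 0$, then the right-hand side is non-positive while the left-hand side is strictly positive (since $\theta>a\gamma\ge 0$ by the standing assumption \eqref{theta>agamma}, assuming $a,\gamma\ge 0$ from the biological parameters), so the inequality is immediate. If $X<0$, both sides are strictly positive, so squaring preserves the inequality and reduces it to $X^2+4\theta > X^2$, i.e.\ $4\theta>0$, which again follows from \eqref{theta>agamma}.

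There is no real analytic obstacle here: the claim is sharp but purely algebraic once the square-root term is isolated, and the role of the hypothesis $\theta>a\gamma$ is only to guarantee that $\theta$ is strictly positive. The one detail to be careful with is ensuring that the step of squaring is legitimate, which is exactly why I would perform the case split on the sign of $X$ rather than squaring blindly. After combining the two cases, the proof of Lemma~\ref{estimate_lambda_lemma} is complete.
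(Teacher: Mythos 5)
Your proof is correct and follows essentially the same route as the paper: both arguments reduce the claim to $\sqrt{X^2+4\theta}>-X$ with $X=\rho^2-D\rho^2+a-\gamma$, use $\theta>a\gamma>0$ only to guarantee strictness, and split on the sign of $X$. The paper phrases this as dropping $4\theta$ to get $\lambda_2>\tfrac{1}{2}\bigl(\rho^2+D\rho^2-a-\gamma+|X|\bigr)$ and then evaluating each sign case, but the content is identical to your isolation-then-squaring argument.
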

    \begin{proof} Using $\theta>a \gamma$, we obtain
        \begin{align}
            \lambda_2&=  \frac{\rho^2+D \rho^2-a- \gamma +\sqrt{(\rho^2- D \rho^2+a- \gamma)^2+ 4 \theta}}{2}\\& >  \frac{\rho^2+D \rho^2-a- \gamma + \sqrt{(\rho^2- D \rho^2+a- \gamma)^2}}{2}.
        \end{align}
    We consider two cases. If we have $\rho^2 - D  \rho^2+a- \gamma \geq 0$ then  
    \begin{align}
        \lambda_2 & \geq  \frac{\rho^2+D \rho^2-a- \gamma + {(\rho^2- D \rho^2+a- \gamma)}}{2} = \rho^2- \gamma \geq D \rho^2 -a.
    \end{align} 
    Secondly, if $\rho^2 - D  \rho^2+a- \gamma \leq0$, 
    \begin{align}
        \lambda_2 & \geq  \frac{\rho^2+D \rho^2-a- \gamma - {(\rho^2- D \rho^2+a- \gamma)}}{2} =  D \rho^2 -a.
    \end{align}
    \end{proof}

\subsection{Conditions for the construction of upper and lower solutions} 
We will use the eigenvector $\nu_2$ from \eqref{nu_s} to define the upper and lower solutions. For this construction to work, we must ensure that all three components of $\nu_2$ are positive.

 \begin{lemma}\label{lem:cases} Assume \eqref{theta>agamma}. Then all components of $\nu_2$ are positive in the following cases:
\begin{enumerate}
    \item $a \le 1$.
    \item $a>1$ and the curves 
    \[
        S(\rho) \qquad \text{and} \qquad H_1(\rho) = D\rho - \frac{1}{\rho}
    \]
    do not intersect.
    \item $a>1$ and the curves $S(\rho)$ and $H_1(\rho)$ intersect, and the smallest intersection point $\rho_*$ satisfies $\rho_* > \rho_m$.
    \item $a>1$ and the curves $S(\rho)$ and $H_1(\rho)$ intersect, and the smallest intersection point $\rho_*$  satisfies $\rho_* < \rho_m$.
\end{enumerate}
In cases (1)–(3) we have $\rho \le \rho_m$, whereas in case (4) we restrict to $\rho \le \rho_*$.

\end{lemma}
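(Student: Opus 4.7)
The plan is to examine each entry of $\nu_2$ separately and reduce the positivity of the first entry to the geometric inequality $S(\rho) > H_1(\rho)$, which the four cases in the statement encode. The third entry is constantly $1$, so it is trivially positive. The second entry is $1/(\lambda_2+a-D\rho^2)$, which is positive by Lemma \ref{estimate_lambda_lemma}. For the first entry
\[
\frac{\lambda_2+1+a-D\rho^2}{(\lambda_2+a-D\rho^2)(\lambda_2+1-D\rho^2)},
\]
Lemma \ref{estimate_lambda_lemma} gives $\lambda_2+1+a-D\rho^2 > 1 > 0$ in the numerator and $\lambda_2+a-D\rho^2 > 0$ as the first factor of the denominator, so the overall sign is governed solely by $\lambda_2+1-D\rho^2$.

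Since $\rho > 0$ and $\lambda_2 = \rho\,S(\rho)$, dividing by $\rho$ converts the condition $\lambda_2+1-D\rho^2 > 0$ into $S(\rho) > D\rho - 1/\rho = H_1(\rho)$. Because $S(\rho) \to +\infty$ and $H_1(\rho) \to -\infty$ as $\rho \to 0^+$, the inequality $S > H_1$ holds on some right-neighbourhood of $0$; the cases of the lemma merely record whether (and where) the two curves first cross.

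The case analysis then proceeds as follows. In Case (1), with $a \le 1$, write $\lambda_2+1-D\rho^2 = (\lambda_2+a-D\rho^2) + (1-a)$; Lemma \ref{estimate_lambda_lemma} makes the first term strictly positive and the second is non-negative, so positivity holds for all $\rho > 0$, and a fortiori for $\rho \le \rho_m$. In Cases (2)--(4), with $a > 1$, the continuous curves $S$ and $H_1$ satisfy $S > H_1$ throughout the interval $(0, \rho_*)$, where we set $\rho_* = +\infty$ in Case (2). The prescribed restrictions $\rho \le \rho_m \le \rho_*$ in Cases (2)--(3) and $\rho \le \rho_*$ in Case (4) therefore keep $\rho$ in the regime where $S > H_1$, yielding the desired positivity.

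The only conceptual step is observing that the sign of the first entry collapses to the scalar condition $\lambda_2 + 1 - D\rho^2 > 0$ once Lemma \ref{estimate_lambda_lemma} rules out the numerator ever becoming negative; from there, the proof is pure bookkeeping of when the two curves $S$ and $H_1$ first cross. The mildest subtlety is at the boundary $\rho = \rho_*$ in Case (4), where the denominator vanishes and $\nu_2$ becomes singular; this is harmless provided the restriction is read as strict inequality at that endpoint.
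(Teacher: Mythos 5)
Your proof is correct and follows essentially the same route as the paper: positivity of the second and third entries comes from Lemma~\ref{estimate_lambda_lemma}, and positivity of the first entry reduces to $\lambda_2+1-D\rho^2>0$, equivalently $S(\rho)>H_1(\rho)$, which the four cases track via the first intersection point $\rho_*$. Your two added observations --- that $S>H_1$ near $\rho=0^+$ guarantees the inequality up to the first crossing, and that the first entry of $\nu_2$ becomes singular exactly at $\rho=\rho_*$ so the endpoint in case (4) should be excluded --- are correct refinements that the paper leaves implicit.
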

\begin{proof}
For the first component of $\nu_2$ in \eqref{nu_s} we have
\[
\frac{\lambda_2 + a + 1 - D\rho^{2}}{(\lambda_2 + a - D\rho^{2})(\lambda_2 + a - D\rho^{2})}.
\]
Lemma~\ref{estimate_lambda_lemma} guarantees that
\[
\lambda_2 + a - D\rho^{2} > 0,
\]
which in turn implies 
\[
\lambda_2 + a + 1 - D\rho^{2} > 0.
\]
Therefore, to ensure positivity of the first component, we further require
\begin{equation}\label{new_conditions}
    \lambda_2 + 1 - D\rho^{2} > 0.
\end{equation}

The second component of $\nu_2$ in \eqref{nu_s} is always positive by Lemma~\ref{estimate_lambda_lemma}.

To determine when condition \eqref{new_conditions} holds, we distinguish two cases depending on the value of $a$.

\medskip
\noindent \textbf{Case 1: $a \le 1$.}
In this case, we have
\[
\lambda_2 +1 - D\rho^2 \geq \lambda_2 +a - D\rho^2 >0, \]
and the positivity requirement is automatically satisfied.  
Figure~\ref{fig:panel-a} illustrates an example of this case by plotting the speed curve 
$S(\rho)=\frac{\lambda_2(\rho)}{\rho}$ together with the curves 
$H_1(\rho)=D\rho - \frac{1}{\rho}$ and $H_2(\rho)=D\rho - \frac{a}{\rho}$.  
When $a \le 1$, the graph of $H_2$ lies strictly above that of $H_1$ for all 
$\rho>0$, and therefore neither curve intersects $S(\rho)$. This confirms that the inequality $\lambda_2 > D\rho^{2} - 1$ 
holds, and hence the components of $\nu_2$ remain positive throughout 
this case.

\medskip
\noindent\textbf{Case 2: $a > 1$.}
When $a>1$, there may exist an intersection point between $H_{1}(\rho)$ and
$S(\rho)$. If no such intersection occurs, then no additional restriction is
required. Otherwise, suppose that an intersection exists, and define the
smallest intersection point $(\rho_{*},c_{*})$ by
\begin{align}\label{rho_*_definition}
    \rho_{*} := \{\rho>0 \,;\, H_{1}(\rho)=S(\rho)\},
    \qquad
    c_{*} := S(\rho_{*}).
\end{align}
The inequality
\[
\lambda_{2}+1-D\rho^{2} > 0
\]
may still hold if $\rho_{m}<\rho_{*}$. However, if the smallest intersection
point satisfies $\rho_{*}<\rho_{m}$, then the inequality
\[
\lambda_{2}+1-D\rho^{2} > 0
\]
fails for $\rho\in(\rho_{*},\rho_{m})$. In this case, the first component of
$\nu_{2}$ becomes negative. Consequently, the construction of an upper solution
is no longer valid, and the parameter regime $\rho\in(\rho_{*},\rho_{m})$
remains undecided.

\end{proof}

We obtain two additional restrictions arising from the construction of upper
and lower solutions, which will be used later in~\eqref{upper_requirement}.
Specifically, we require
\begin{align}\label{H3_condition}
    c \;\ge\; D\rho \eqqcolon H_{3}(\rho),
\end{align}
and
\begin{align}\label{H4_condition}
    c \;\le\; \frac{-a}{\rho} + \frac{\theta}{\gamma\rho} + D\rho
    \eqqcolon H_{4}(\rho).
\end{align}

If $H_{3}(\rho)$ intersects $S(\rho)$ at a point $(\check{\rho},\check{c})$,
defined by
\begin{align}\label{check_rho_definition}
    \check{\rho} := \{\rho>0 \,;\, H_{3}(\rho)=S(\rho)\},
    \qquad
    \check{c} := S(\check{\rho}),
\end{align}
then the admissible values of $\rho$ satisfy $\rho \le \check{\rho}$.

Similarly, if $H_{4}(\rho)$ intersects $S(\rho)$ at a point
$(\tilde{\rho},\tilde{c})$, defined by
\begin{align}\label{tilde_rho_definition}
    \tilde{\rho} := \{\rho>0 \,;\, H_{4}(\rho)=S(\rho)\},
    \qquad
    \tilde{c} := S(\tilde{\rho}),
\end{align}
then the admissible values of $\rho$ satisfy $\rho \le \tilde{\rho}$.

Combining all the above restrictions
\eqref{rho_*_definition}, \eqref{check_rho_definition}, and
\eqref{tilde_rho_definition}, the values of $\rho$ for which we can establish
the existence of positive travelling wave solutions satisfy
\[
0 < \rho \le \bar{\rho},
\]
where
\begin{align}\label{bold_rho}
    \bar{\rho}
    := \min\{\rho_m,\, \rho_{*},\, \check{\rho},\, \tilde{\rho}\},
    \qquad
    \bar{c} := S(\bar{\rho}).
\end{align}
Here $\rho_m$ is defined in~\eqref{rho_m_definiton}, $\rho_*$ in \eqref{rho_*_definition}, $\check \rho$ in \eqref{check_rho_definition}, and $\tilde \rho$ in \eqref{tilde_rho_definition}.

\medskip

In Figure~\ref{fig:two-panels}, we present four different cases illustrating the relative alignment of the critical points
$\rho_m$, $\rho_{*}$, $\tilde{\rho}$, and $\check{\rho}$.
Figures~\ref{fig:panel-a} and~\ref{fig:panel-c} display the curves
$S(\rho)$, $H_{1}(\rho)$, $H_{2}(\rho)$, $H_{3}(\rho)$, and $H_{4}(\rho)$
in the parameter regime $a>1$.
In Figure~\ref{fig:panel-a}, since $a>1$, the curve $H_{1}(\rho)$ lies above
$H_{2}(\rho)$. However, there is no intersection between $H_{1}(\rho)$ and
$S(\rho)$ for $\rho<\rho_m$. Similarly, $H_{3}(\rho)$ and $S(\rho)$ do not
intersect for $\rho<\rho_m$. The curve $H_{4}(\rho)$ intersects $S(\rho)$ for
$\rho>\rho_m$, and the point $(\rho_m,c_m)$ lies within the shaded region.
Consequently, in this case we have $\bar{\rho}=\rho_m$.
The value of $\gamma$ used in Figures~\ref{fig:panel-a} and~\ref{fig:panel-c}
is taken from~\cite{arwa} and equals $\gamma=40/3$. By contrast, Friedman
et al.~\cite{FriedmanAvner2006GVEo} reported a viral clearance rate of $\gamma=1.25$.
This smaller value of $\gamma$ yields a less restrictive shaded region, as
illustrated in Figures~\ref{fig:panel-b} and~\ref{fig:panel-d}.
Figure~\ref{fig:panel-b} depicts a possible, though biologically unrealistic,
parameter regime in which the curve $H_{1}(\rho)$ intersects $S(\rho)$, as does
$H_{3}(\rho)$. In this situation, the smallest of these intersection points
lying within the shaded region determines $\bar{\rho}$. 

Figures~\ref{fig:panel-b}
and~\ref{fig:panel-d} correspond to the case $a<1$. In both panels, the curve
$H_{2}(\rho)$ lies strictly above $H_{1}(\rho)$, and consequently there are no
intersections between $S(\rho)$ and $H_{1}(\rho)$.
The effect of varying $\gamma$ is particularly evident in these latter two
cases. As shown in Figure~\ref{fig:panel-b}, for larger values  of $\gamma$ the
region between $H_{4}(\rho)$ and $S(\rho)$ becomes more restrictive, and
$\tilde{\rho}$ occurs before $\rho_m$, so that $\bar{\rho}=\tilde{\rho}$.
In contrast, for smaller values of $\gamma$, $\tilde{\rho}$ occurs beyond
$\rho_m$, and hence $\bar{\rho}=\rho_m$.

\begin{figure}[ht]
    \centering
    
    \begin{subfigure}[t]{0.49\textwidth}
        \centering
        \includegraphics[width=\textwidth]{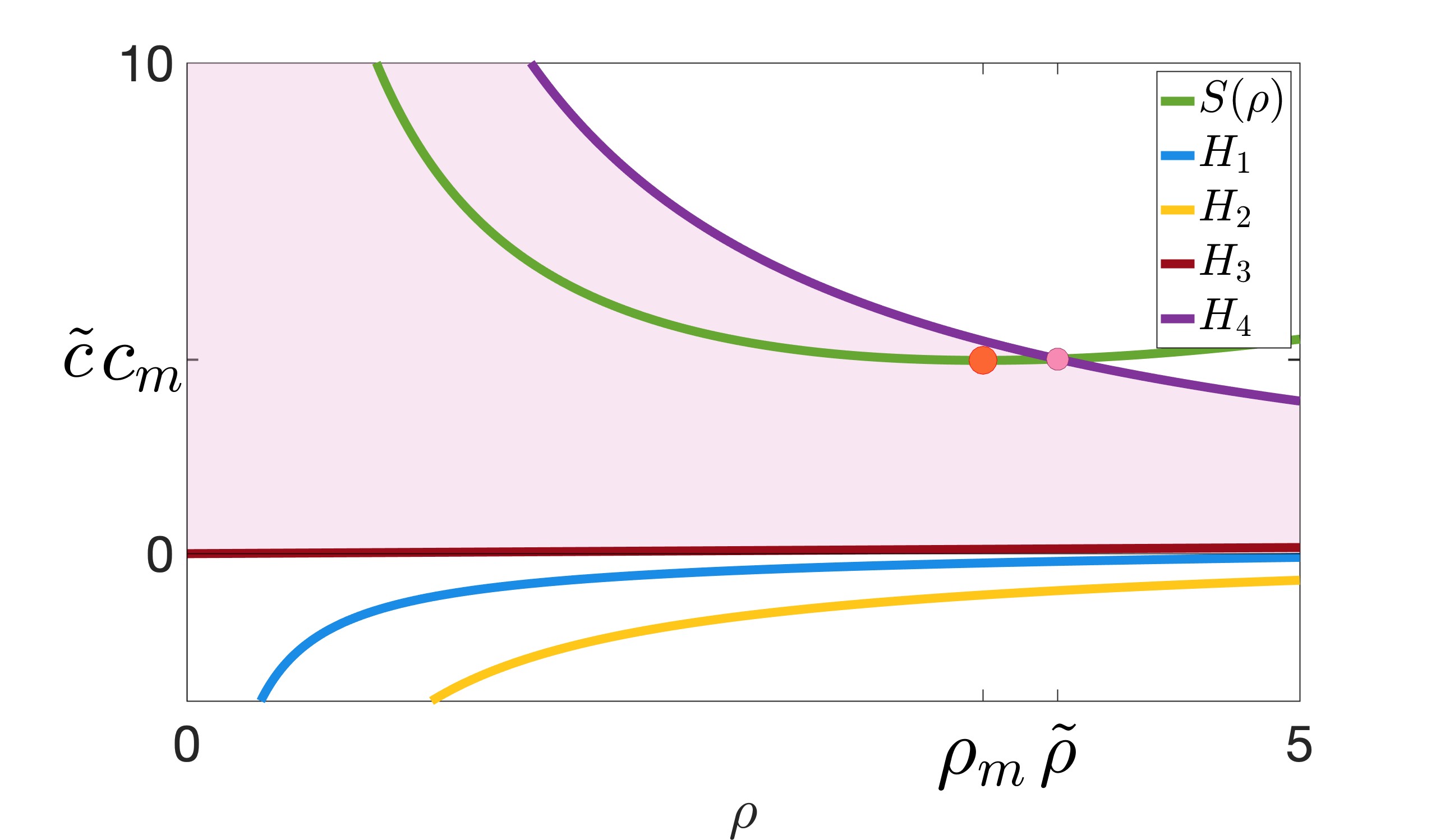}
        \caption{}
        \label{fig:panel-a}
    \end{subfigure}
    \hfill
    \begin{subfigure}[t]{0.49\textwidth}
        \centering
        \includegraphics[width=\textwidth]{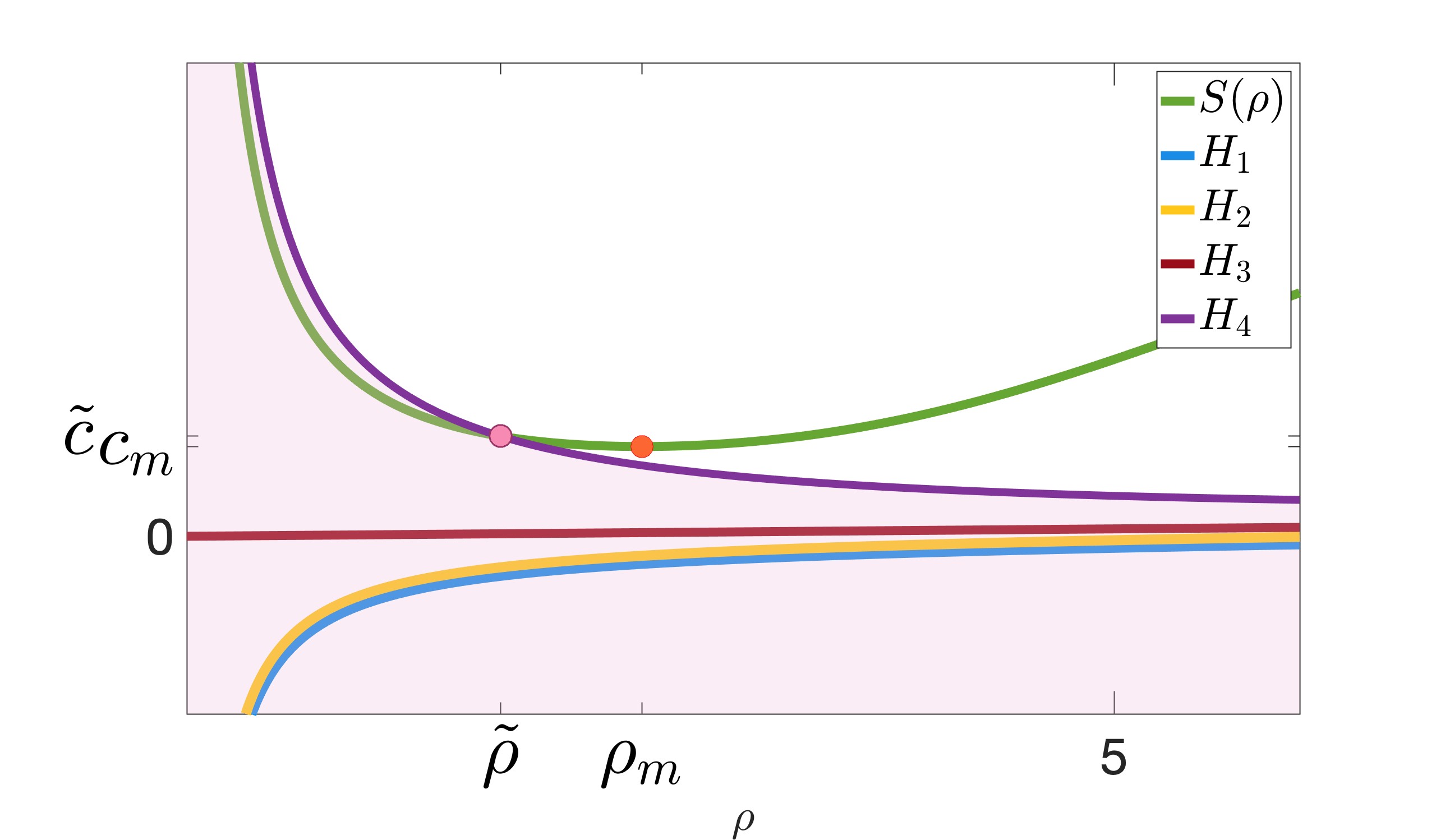}
        \caption{}
        \label{fig:panel-b}
    \end{subfigure}
    \begin{subfigure}[t]{0.49\textwidth}
        \centering
        \includegraphics[width=\textwidth]{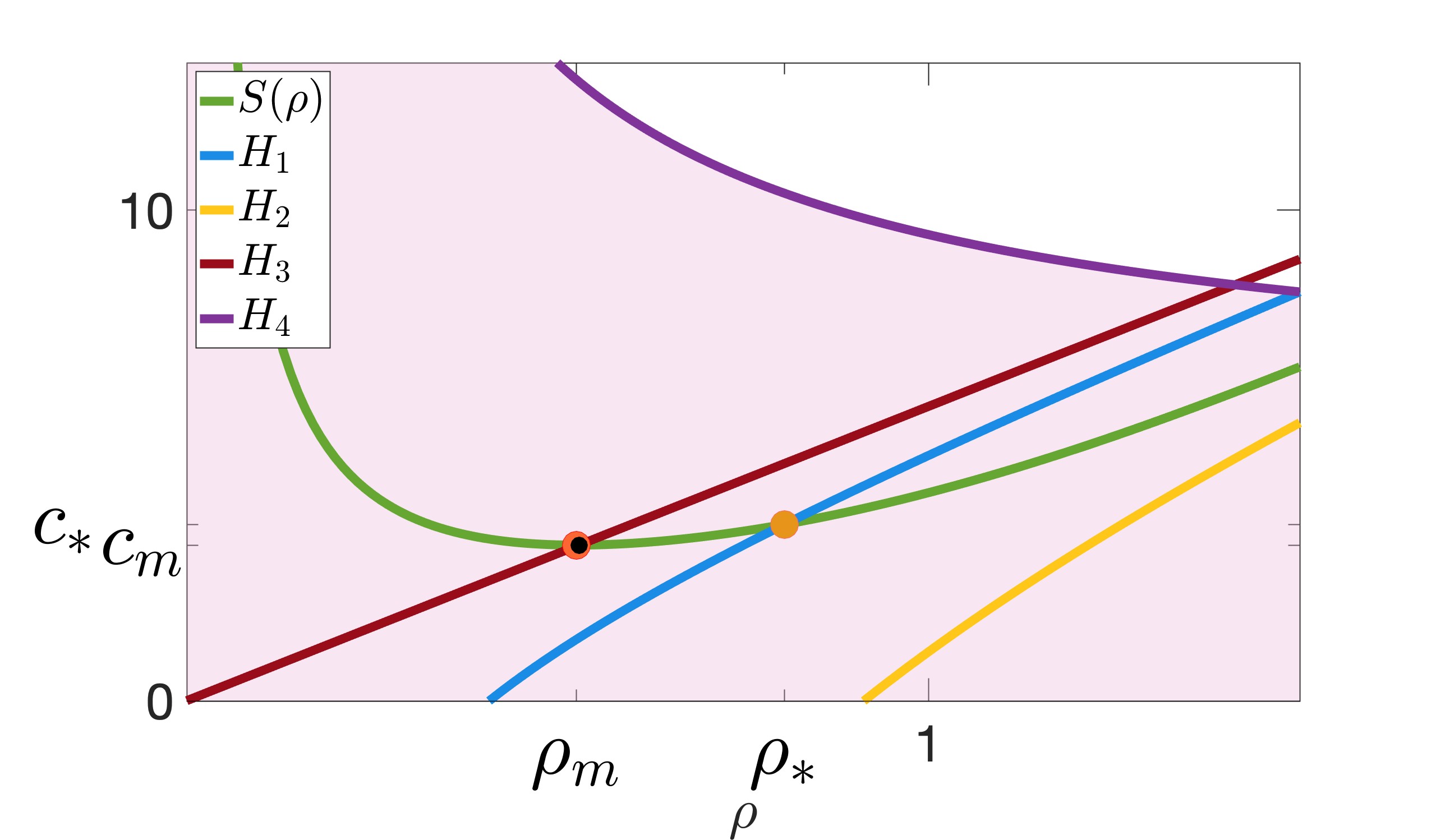}
        \caption{}
        \label{fig:panel-c}
    \end{subfigure}
    \begin{subfigure}[t]{0.49\textwidth}
        \centering
        \includegraphics[width=\textwidth]{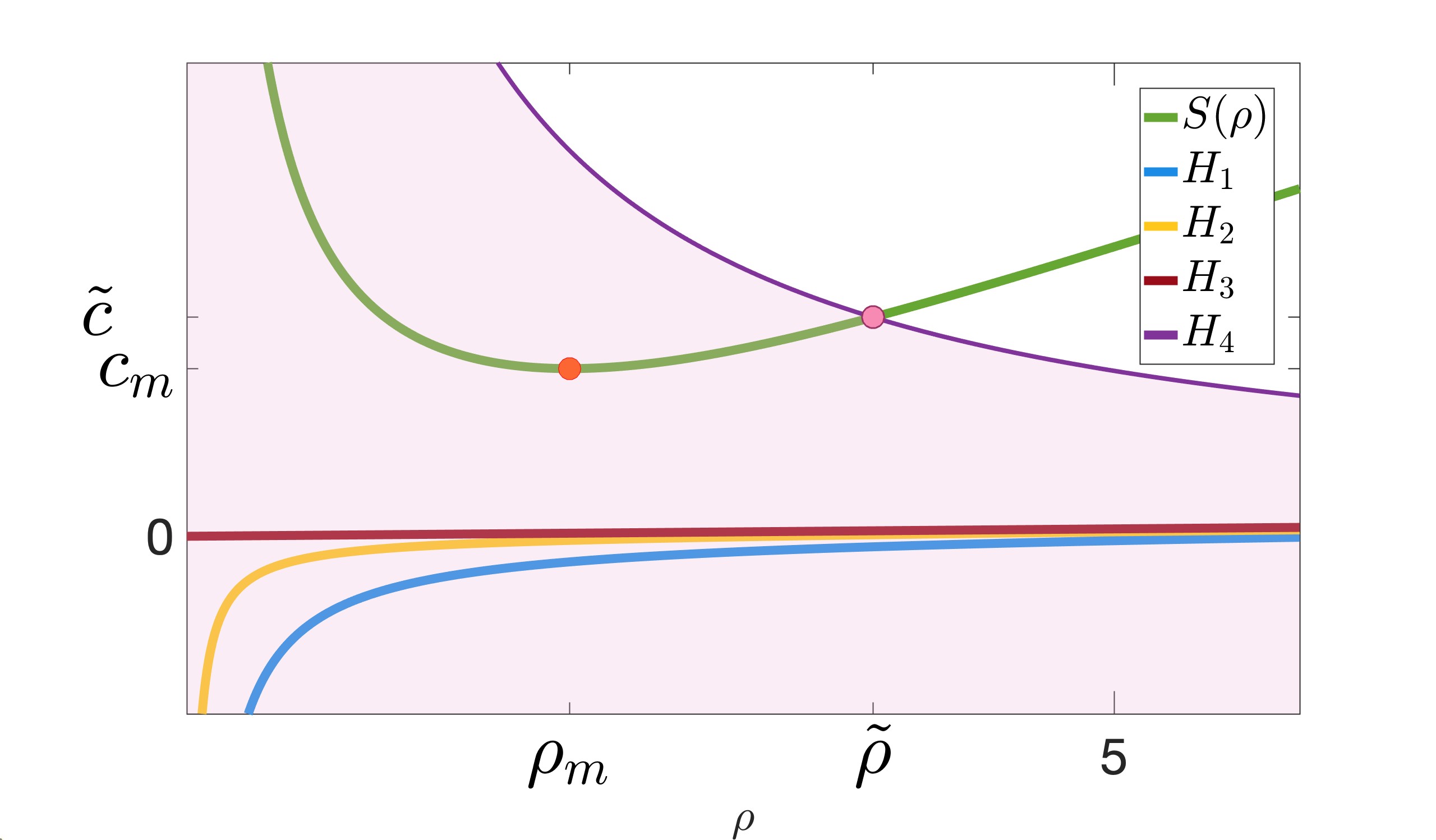}
        \caption{}
        \label{fig:panel-d}
    \end{subfigure}
  \caption{Numerical illustration of $S(\rho)$, and its minimum value indicated by the orange ball, and  $H_1(\rho),\ H_2(\rho), \ H_3(\rho)$, and $H_4(\rho)$ for different parameter values. The parameter values have been chosed to illustrate each of the regimes and are given by:  (a) $a= \frac{10}{3},\ \gamma= \frac{40}{3},\ \theta=250,\ D=0.025$ (b) $a=0.96,\ \gamma= \frac{40}{3}, \theta=50, D=0.025$, (c) $a=5 ,\ \gamma=2,\ \theta=17,\ D=6$ (d) $a= 0.96,\ \gamma= 1.25,\ \theta=17,\ D=0.025$. Note that case (c) may not arise in biologically relevant cases.} 
    \label{fig:two-panels}
\end{figure}

Before stating the main theorem, we analyze the asymptotic behaviour of the curves 
\(S(\rho)\) and \(H_4(\rho)\) as \(\rho\to 0^+\). As illustrated in Figure~\ref{fig:two-panels},  
whenever these two curves intersect at a point \((\tilde\rho,\tilde c)\), $\tilde \rho<{\rho}_m, $ no further 
intersections occur for \(0<\rho<\tilde\rho\).  
To justify this observation analytically, we examine the leading-order behaviour of  
both curves for small \(\rho\).
\begin{lemma}\label{lem:asymptotic}
\label{lem:H4>S_near_zero}
Under assumption \eqref{theta>agamma}, for $\rho$ sufficiently small we have
\[
H_4(\rho) > S(\rho).
\]
\end{lemma}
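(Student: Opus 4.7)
The plan is to compare the leading-order asymptotic behaviours of $H_4(\rho)$ and $S(\rho)$ as $\rho\to 0^+$. Both functions diverge like $1/\rho$ (with positive coefficient, by assumption~\eqref{theta>agamma}), so it suffices to identify their respective leading coefficients and verify that the one belonging to $H_4$ is strictly larger.

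First I would read off the expansion of $H_4$ directly from its definition:
\begin{align}
H_4(\rho) \;=\; \frac{\theta/\gamma-a}{\rho} + D\rho \;=\; \frac{\theta-a\gamma}{\gamma\,\rho} + O(\rho),
\end{align}
so the leading coefficient as $\rho\to 0^+$ is $(\theta-a\gamma)/\gamma>0$. Next I would expand $S(\rho)$. Since the polynomial under the square root in the definition of $\lambda_2$ equals $(a-\gamma)^2+4\theta+O(\rho^2)$, Taylor expansion gives
\begin{align}
S(\rho) \;=\; \frac{-(a+\gamma)+\sqrt{(a-\gamma)^2+4\theta}}{2\rho} + O(\rho),
\end{align}
so the leading coefficient of $S$ is $\tfrac12\bigl(-(a+\gamma)+\sqrt{(a-\gamma)^2+4\theta}\bigr)$.

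The core of the proof then reduces to the algebraic inequality
\begin{align}
\frac{\theta-a\gamma}{\gamma} \;>\; \frac{-(a+\gamma)+\sqrt{(a-\gamma)^2+4\theta}}{2}.
\end{align}
I would rearrange this as $\tfrac{2(\theta-a\gamma)}{\gamma}+(a+\gamma)>\sqrt{(a-\gamma)^2+4\theta}$. The left-hand side is positive (both terms are positive under \eqref{theta>agamma}), so squaring is an equivalence. After expanding $(2\theta-a\gamma+\gamma^2)^2/\gamma^2$ and cancelling the common terms $(a-\gamma)^2+4\theta$, the inequality collapses to
\begin{align}
\frac{4\theta}{\gamma^2}\,(\theta-a\gamma) \;>\; 0,
\end{align}
which is exactly the standing hypothesis \eqref{theta>agamma}. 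Once the strict inequality of leading coefficients is established, the conclusion $H_4(\rho)>S(\rho)$ for all sufficiently small $\rho>0$ follows from the $O(\rho)$ remainders being uniformly controlled on any bounded neighbourhood of zero.

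The only mildly delicate step is the squaring manipulation: I would make sure to verify positivity of both sides before squaring and keep careful track of the $\gamma^2$ denominator when expanding $(2\theta-a\gamma+\gamma^2)^2$. Apart from that, the argument is a direct asymptotic comparison, and no auxiliary construction is needed.
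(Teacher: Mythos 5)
Your proposal is correct and follows essentially the same route as the paper: extract the $1/\rho$ leading coefficients of $H_4$ and $S$, reduce to the coefficient inequality, square (after checking positivity), and collapse to $\tfrac{4\theta}{\gamma^{2}}(\theta-a\gamma)>0$, which is the standing assumption $\theta>a\gamma$. The paper's computation is identical up to an overall factor of $4$, and your explicit verification of positivity before squaring is a small but welcome refinement.
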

\begin{proof}
For \(\rho\) sufficiently small, we may neglect all terms of order \(\mathcal{O}(\rho)\) or higher.  
Under this approximation, \(H_4(\rho)\) reduces to  
\[
    H_4(\rho)\;\sim\;\frac{1}{\rho}\left(\frac{\theta}{\gamma}-a\right),
\]
while \(S(\rho)\) becomes
\[
    S(\rho)\;\sim\;
    \frac{1}{\rho}\left(
        -\frac{a}{2}-\frac{\gamma}{2}
        +\frac{1}{2}\sqrt{(a-\gamma)^2+4\theta}
    \right).
\]
Since both expressions scale like \(\mathcal{O}(1/\rho)\), the ordering of the two curves  
for small \(\rho\) is determined by comparing their coefficients.  
To ensure that \(H_4(\rho)\) lies strictly above \(S(\rho)\) as \(\rho\to 0^+\), we require
\begin{align}\label{ineq:H4>S}
    \frac{\theta}{\gamma}-a
    \;>\;
    -\frac{a}{2}-\frac{\gamma}{2}
    +\frac{1}{2}\sqrt{(a-\gamma)^2+4\theta}.
\end{align}

By simplifying~\eqref{ineq:H4>S} and squaring both sides, we obtain the equivalent condition
\[
    \left(\frac{\theta}{\gamma}-\frac{a}{2}+\frac{\gamma}{2}\right)^{2}
    >
    \frac{1}{4}\Big((a-\gamma)^{2}+4\theta\Big),
\]
which expands to
\[
    \frac{\theta^{2}}{\gamma^{2}}
    +\frac{a^{2}}{4}
    +\frac{\gamma^{2}}{4}
    -\frac{\theta a}{\gamma}
    +\theta
    -\frac{a\gamma}{2}
    >
    \frac{a^{2}}{4}
    +\frac{\gamma^{2}}{4}
    +\theta
    -\frac{a\gamma}{2}.
\]
Cancelling common terms yields the condition
\begin{align}\label{ineq:final_theta_gamma}
    \theta\left(\frac{\theta}{\gamma^{2}}-\frac{a}{\gamma}\right)>0.
\end{align}

Since our standing assumption is \(\theta>a\gamma\), the right-hand side of 
\eqref{ineq:final_theta_gamma} satisfies
\[
    \dfrac{\theta}{\gamma}\left(\frac{\theta}{\gamma}-{a}\right)
    >
     0.
\]
Thus condition~\eqref{ineq:final_theta_gamma} holds automatically.  
Consequently, we conclude that
\[
    H_4(\rho)\;>\;S(\rho)\qquad \text{for all sufficiently small } \rho>0,
\]
which confirms that the two curves cannot intersect again as \(\rho\to 0^+\).

\end{proof}
Note that $H_4(\rho)$ is convex for $\rho>0$, since
\[
H_4''(\rho)= \frac{2\left(\frac{\theta}{\gamma}-a\right)}{\rho^{3}} > 0 \qquad (\rho>0).
\]
Because of the algebraic complexity of the function $S(\rho)$, establishing its convexity on the interval $0<\rho<\rho_m$ is not straightforward. Nevertheless, by Lemma~\ref{lem:asymptotic}, we know that $H_4(\rho) > S(\rho)$ as $\rho \to 0^+$. Although our numerical simulations did not reveal any secondary intersections between the graphs of $H_4(\rho)$ and $S(\rho)$, we adopt a conservative formulation and define
\begin{equation}\label{tilderho}
\tilde{\rho}
    = \underset{0<\rho<\rho_m}{\min}\,\{\rho : H_4(\rho)=S(\rho)\},
\end{equation}
that is, $\tilde{\rho}$ is the smallest positive value of $\rho$ at which the two curves intersect.

\begin{thm}\label{mainTHeorem} Assume \eqref{theta>agamma} and \eqref{bold_rho}. Then for each $c \ge \bar{c}$ the system \eqref{eqn:immune} has a self-similar solution of the form $(C(\xi), I(\xi), V(\xi))  $ with $\xi=x+ct$. The solution connects to $(1,0,0)$ for $\xi \to - \infty$ and stays bounded for $\xi\to\infty$. 
 \end{thm}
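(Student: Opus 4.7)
The plan is to recast the travelling wave problem as a fixed-point problem for the integral operator $\mathcal{T}=(\mathcal{T}_1,\mathcal{T}_2,\mathcal{T}_3)$ defined by the right-hand sides of \eqref{c_operator}, \eqref{I_operator}, \eqref{V_operator}, and then apply Schauder's fixed-point theorem on a closed, convex, bounded subset of $C(\mathbb{R};\mathbb{R}^3)$ sandwiched between explicit upper and lower solutions. The free parameter $\alpha>0$ in $F(U)=\alpha U+f(U)$ will be chosen large enough that $F$ is componentwise nondecreasing on the relevant box; combined with positivity of the exponential kernels $e^{(\xi-\eta)\delta_i}$, $e^{(\xi-\eta)\sigma_i}$, $e^{(\xi-\eta)\zeta_i}$, this makes $\mathcal{T}$ a positive monotone operator whose fixed points solve \eqref{eq:short2}, hence \eqref{ODE_B}.

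For a fixed $c\ge\bar c$, I would pick the smallest $\rho\in(0,\bar\rho]$ with $S(\rho)=c$ and, using the positive eigenvector $\nu_2=(\nu_b,\nu_i,\nu_v)$ from \eqref{nu_s} (whose positivity in the admissible regime is supplied by Lemma~\ref{lem:cases}), define the upper solution as the truncated exponential
\begin{align*}
\bar B(\xi)=\min\{\nu_b e^{\rho\xi},K_B\},\qquad
\bar I(\xi)=\min\{\nu_i e^{\rho\xi},K_I\},\qquad
\bar V(\xi)=\min\{\nu_v e^{\rho\xi},K_V\},
\end{align*}
with $K_B,K_I,K_V$ chosen as explicit upper envelopes of the coexistence equilibrium $E_2$ (or of the periodic orbit around $E_2$ beyond the Hopf bifurcation). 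Verifying the upper-solution inequality $\mathcal{D}\bar U''-c\bar U'+f(\bar U)\le 0$ splits into two regimes. On the left tail $\xi\ll 0$, where $\bar U=\nu_2 e^{\rho\xi}$, the inequality reduces to the linear dispersion identity $A(\rho)\nu_2=c\rho\nu_2$ together with a sign check on the negative quadratic corrections $B^2$, $-BI$, $-BV$, $(1-B)V$, which holds automatically since those terms are of order $e^{2\rho\xi}$. On the right part $\xi\gg 0$, where $\bar U$ is constant, the inequality reduces to an algebraic condition on $K_B,K_I,K_V$ which is exactly what forces the bounds \eqref{H3_condition}--\eqref{H4_condition}, since $H_3$ and $H_4$ encode the compatibility of a constant cap with the $I$- and $V$-equations.

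The main obstacle will be the lower solution. Because the system is non-cooperative, I cannot simply take $\underline U=\max\{\nu_2 e^{\rho\xi},0\}$: the $-BI$ and $-BV$ terms destroy monotonicity. Following the framework of \cite{ThiessenRyan2026Twst}, I would use a second exponential as a corrector,
\begin{align*}
\underline B(\xi)=\max\{\nu_b e^{\rho\xi}-q_b\,e^{(\rho+\varepsilon)\xi},0\},
\end{align*}
and analogous expressions for $\underline I,\underline V$, with a small $\varepsilon>0$ so that $\rho+\varepsilon\in(0,\bar\rho]$ and the correction dominates the quadratic cross-terms on the left tail while remaining below $\bar U$ everywhere. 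The constants $q_b,q_i,q_v$ are selected by balancing the coefficients appearing when one substitutes this ansatz into $\mathcal{D}\underline U''-c\underline U'+f(\underline U)\ge 0$; this is precisely where the spectral gap between $\lambda_2(\rho)$ and $\lambda_2(\rho+\varepsilon)$ enters, and where the restrictions $\rho\le\rho_*,\check\rho,\tilde\rho$ become essential (without them, one cannot choose $q_b,q_i,q_v>0$ consistently, which produces the ``foggy'' regime flagged earlier).

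With upper and lower solutions in hand, I would set
\begin{align*}
\Gamma:=\{U\in C(\mathbb{R};\mathbb{R}^3):\underline U(\xi)\le U(\xi)\le \bar U(\xi)\ \text{for all }\xi\in\mathbb{R}\},
\end{align*}
equipped with the topology of uniform convergence on compact subsets, and check the three hypotheses of Schauder. Invariance $\mathcal{T}(\Gamma)\subset\Gamma$ follows from monotonicity of $F$ together with the fact that upper and lower solutions are themselves $\mathcal{T}$-subsolutions and $\mathcal{T}$-supersolutions. Continuity of $\mathcal{T}$ on $\Gamma$ is a standard dominated-convergence argument using the exponential decay of the kernels and the a priori bound $\bar U$. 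Compactness follows because the kernels are smooth and $\bar U$ is bounded, so $\mathcal{T}U$ is uniformly Lipschitz on compact intervals, and Arzel\`a--Ascoli applies. Schauder's theorem then yields $U^\ast\in\Gamma$ with $\mathcal{T}U^\ast=U^\ast$. The sandwich $\underline U\le U^\ast\le\bar U$ forces $U^\ast(-\infty)=0$, i.e.\ $C^\ast(-\infty)=1$ and $I^\ast(-\infty)=V^\ast(-\infty)=0$, while the uniform bound by $\bar U$ yields the boundedness condition in \eqref{boundarycondition}. Returning to $C=1-B$ completes the proof.
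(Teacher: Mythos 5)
Your overall architecture coincides with the paper's: the same integral reformulation with the $\alpha$-shift to restore monotonicity, truncated exponentials built from the eigenvector $\nu_2$ as upper solutions, a two-exponential corrector for the lower solutions, and Schauder's theorem on the order interval $\Gamma$. Two of your choices differ in detail but are defensible: the paper works in a weighted Banach space with norm $\|\cdot\|_\mu$ rather than the compact-open topology (either supports a Schauder-type argument), and the paper takes $\underline{B}\equiv 0$ rather than your nontrivial $\underline{B}=\max\{\nu_b e^{\rho\xi}-q_b e^{(\rho+\varepsilon)\xi},0\}$ --- the trivial choice makes the first lower-solution inequality read $-\underline{I}-\underline{V}\le 0$ and costs only a short separate argument that the fixed point does not have $B\equiv 0$, whereas your version forces you to control the sign-indefinite terms $\underline{B}^2-\underline{B}\,\underline{I}-\underline{B}\,\underline{V}$ as well, which is harder and unnecessary.

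The genuine gap is the critical speed $c=\bar c$. Your construction selects $\rho\in(0,\bar\rho]$ with $S(\rho)=c$ and then needs $\varepsilon>0$ with $\rho+\varepsilon$ still admissible and $c_\varepsilon=S(\rho+\varepsilon)<c$; at $c=\bar c$ one has $\rho=\bar\rho$, and in the generic case $\bar\rho=\rho_m$ the value $\bar c=c_m$ is the global minimum of $S$, so no such $\varepsilon$ exists and the lower solution cannot be built. The paper closes this by proving existence only for $c>\bar c$ via Schauder and then passing to the limit $c_n\downarrow\bar c$ along a sequence of waves, using the uniform derivative bounds, the Arzel\`a--Ascoli theorem, and dominated convergence in the fixed-point identity (the argument of Fang and Zhao \cite{FangJian2009MWfP}); your proposal claims all $c\ge\bar c$ in one stroke and this step is missing. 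A second, smaller inaccuracy: on the left tail the quadratic corrections to the $B$-inequality are not ``automatically'' of the right sign --- one needs $\bar B\le\bar I+\bar V$, i.e.\ precisely the condition $c\ge D\rho$ of \eqref{H3_condition}, so that $\bar B(\bar B-\bar I-\bar V)\le 0$; you attribute $H_3$ to the constant-cap regime, but it is already forced in the exponential regime (see \eqref{I+V>B}). Neither of these smaller points is fatal, but the critical-speed case must be added for the theorem as stated.
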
 

 \noindent{\bf Remarks:} 
 \begin{enumerate}
 \item Theorem \ref{mainTHeorem} is formulated in $(C,I,V)$, but we will use the formulation in $(B,I,V)$ for the proof. 
     \item 
 Theorem \eqref{mainTHeorem} is our main theorem, and we will use  Schauder's fixed point Theorem on an appropriate set $\Gamma$ where we will show this set is non-empty, closed, bounded, convex subset of some Banach space $\chi$. Once the upper ($\bar{B}, \bar{I}, \bar{V}$) and the lower solutions ($\underline{B},\underline{I},\underline{V}$) are defined, the set $\Gamma$ will be  
\begin{align}
    \Gamma \coloneqq \{(B,I,V) \in C(\mathbb{R}, \mathbb{R}^3): (\underline{B}(\xi), \underline{I}(\xi), \underline V(\xi))\leq (B(\xi),I(\xi),V(\xi))\le (\bar{B}(\xi),\bar {I}(\xi),\bar {V}(\xi)), \forall \xi \in \mathbb{R}\}.
\end{align}

\end{enumerate}

\subsection{Construction of upper and lower solution}For the construction of upper and lower solutions, note that for any
$c>\bar{c}$ there exists a $\rho\in(0,\bar{\rho})$ such that $S(\rho)=c$, where
$\bar{\rho}$ is defined in~\eqref{bold_rho}. Moreover, since $S(\rho)$ is
increasing on $(0,\bar{\rho}]$, we have
$S(\bar{\rho})=\bar{c}<c$. For $\epsilon>0$, define
\[
c_{\epsilon}:=S(\rho+\epsilon).
\]
If $\epsilon$ is chosen sufficiently small, then
\[
\bar{c} < c_{\epsilon} < c.
\]

We now assume that $\lambda_{2}(\rho)$ denotes the eigenvalue given in
\eqref{lambdas}, which is associated with the wave speed $c$ and the decay rate
$\rho$.

The wave–speed curve $S(\rho)$ is shown in Figure~\ref{fig:speed_curve} (left
panel). The pair $(\bar{\rho},\bar{c})$ is indicated on the curve by an orange
marker. We also introduce the points
\begin{align}\label{rho_rho_epsilon}
    \rho := \bar{\rho} - \tau,
    \qquad
    \rho_{\epsilon} := \rho + \epsilon .
\end{align}
Here $\tau>0$ is arbitrary, chosen so that $\bar{\rho}-\tau>0$, and
$\epsilon>0$ is sufficiently small to ensure that
\[
\rho < \rho_{\epsilon} < \bar{\rho}.
\]

\begin{figure}
    \centering \includegraphics[width=7.3cm]{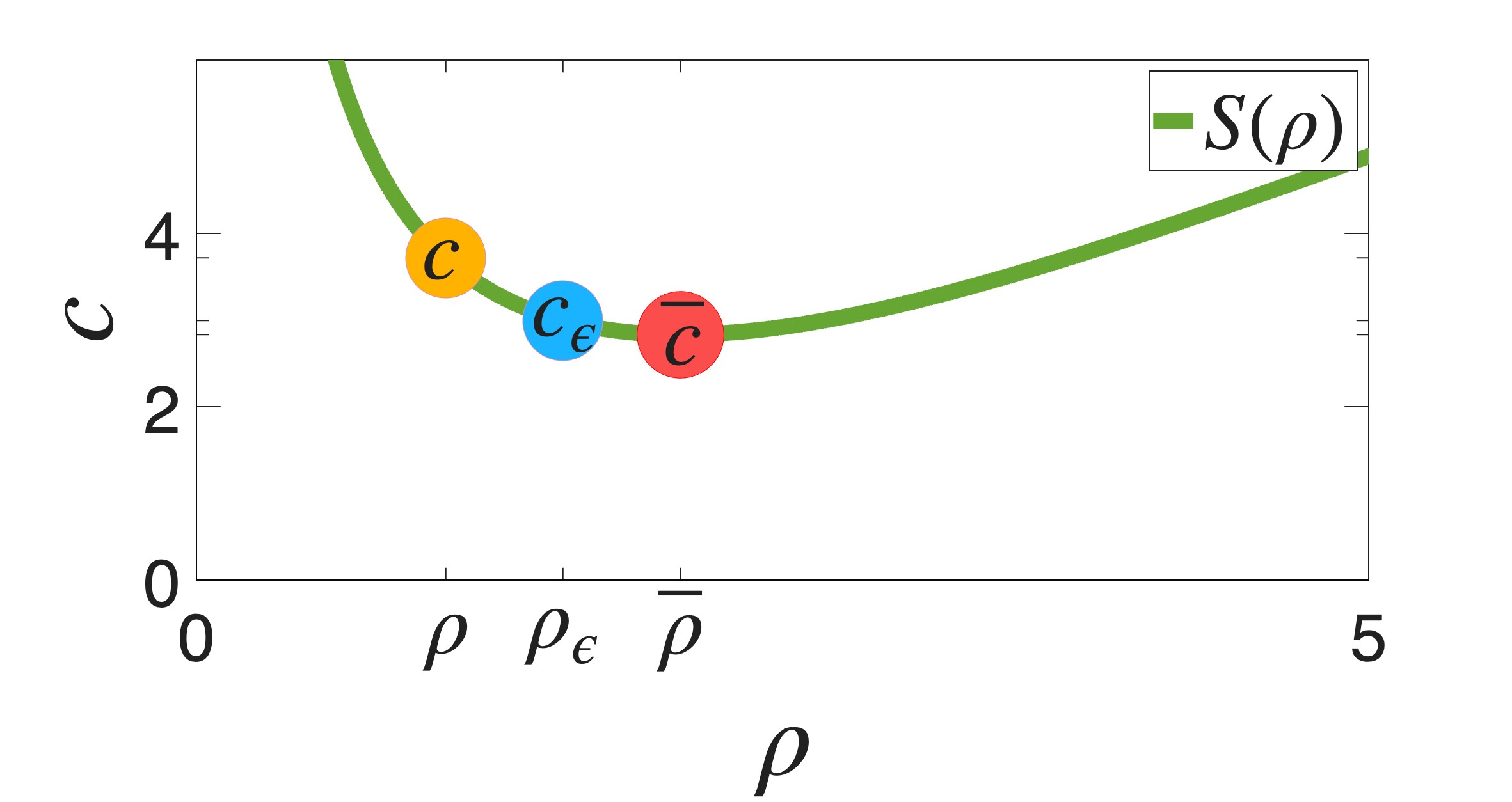}\hspace*{0.7cm}
    \includegraphics[width=6.8cm]{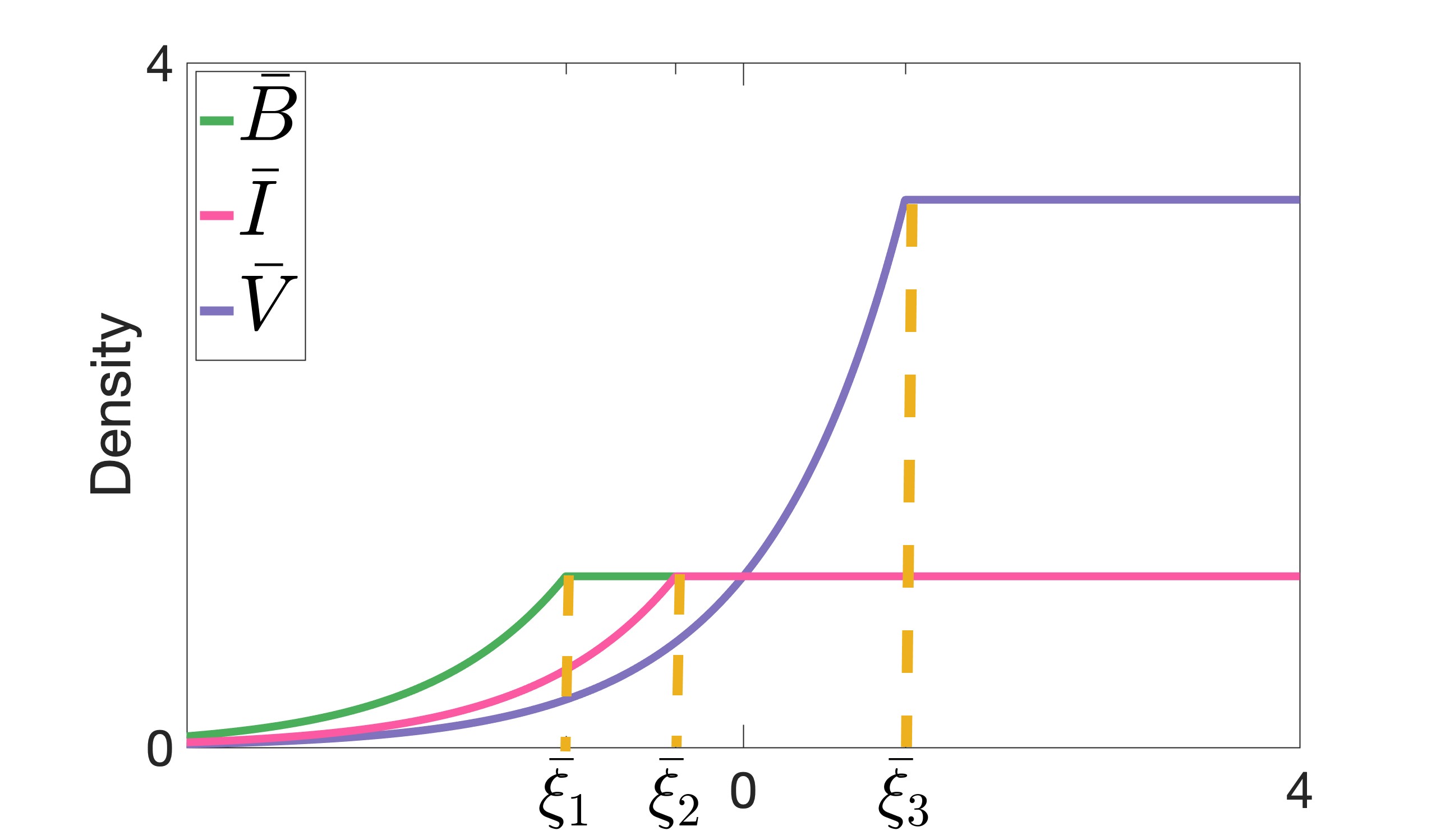}
    \caption{Left: Wave–speed function $S(\rho)$ together with the values $\bar{c}$, $c$,
and $c_{\epsilon}$. The point $(\bar{\rho},\bar{c})$ is marked in red. The points
$\rho=\bar{\rho}-\tau$ and $\rho_{\epsilon}=\rho+\epsilon$, with $\tau=1$ and
$\epsilon=0.5$, are also shown, along with their corresponding wave speeds
$(c,c_{\epsilon})$.  
Right: Sketch of the upper solutions and their transition points
$\bar{\xi}_{1}$, $\bar{\xi}_{2}$, and $\bar{\xi}_{3}$.
  }  
    \label{fig:speed_curve}
\end{figure}

We  now define our upper and lower solution functions.
The upper solutions $(\bar{B}, \bar{I},\bar{V})$ are given by
\begin{align} \label{upper_definitions} \nonumber
    \bar{B}(\xi)&= \min \left\{e^{\rho \xi} \dfrac{(c \rho+a+1-D \rho^2)}{(c \rho +a- D \rho^2)(c\rho +1- D\rho^2)},1\right\}, \\ \bar{I}(\xi)&= \min \left\{e^{\rho \xi}\dfrac{1}{c \rho +a - D \rho^2},1 \right\}, \\ \nonumber \bar{V}(\xi)&= \min\left\{e^{\rho \xi}, \frac{\theta}{\gamma}\right\},
\end{align}
and the lower solutions $(\underline{B},\underline{I},\underline{V})$ are given by
\begin{align} \label{lower_definitions} \nonumber
     \underline{B}(\xi)&= 0, \\ \underline{I}(\xi)&= \max \left\{\kappa e^{\rho \xi} \dfrac{1}{c \rho +a - D \rho^2}-e^{(\rho+ \epsilon)\xi}\dfrac{1}{c_\epsilon (\rho+\epsilon) +a - D (\rho+\epsilon)^2},0 \right\}, \\ \nonumber \underline{V}(\xi)&= \max \left\{\kappa e^{\rho \xi}-e^{(\rho+ \epsilon)\xi},0 \right\},
 \end{align}
where $0<\kappa<1$ will be specified later.

We denote by $\bar{\xi}_{1}$, $\bar{\xi}_{2}$, and $\bar{\xi}_{3}$ the points at which the minimum functions in the definition of the upper solutions~\eqref{upper_definitions} switch from their exponential form to their corresponding constant values.  And 
similarly, we use $\underline{\xi}_{2}$ and $\underline{\xi}_{3}$ to denote the points at which the maximum functions in the definition of the lower solutions~\eqref{lower_definitions} switch from their exponential form to their corresponding constant values. We show the upper solutions in Figure \ref{fig:speed_curve} on the right. 

 The transition points are given by
 \begin{align}
    \bar{\xi}_1&=\frac{1}{\rho} \ln {\frac{(D \rho^2 -a -c \rho)(D\rho^2 -a- c \rho)}{D \rho^2-a-1-c \rho} },\\\bar{\xi}_2&=\frac{1}{\rho} \ln{(c \rho +a - D \rho^2)}, \\ \bar{\xi}_3&=\frac{1}{\rho}\ln{\frac{\theta}{\gamma}}, \\ \underline{\xi_2}&= \frac{1} {\epsilon} \ln{\kappa\frac{c_\epsilon (\rho+\epsilon) +a - D (\rho+\epsilon)^2}{c \rho +a - D \rho^2}}, \\  \underline{\xi_3}&= \frac{1} {\epsilon} \ln{\kappa}.
\end{align}

Thus, we can rewrite the constructed upper solutions as \begin{align}\label{uppersolutions}
     \nonumber \bar{B}(\xi)& =
    \begin{cases}
     e^{\rho \xi} \dfrac{(c \rho+a+1-D \rho^2)}{(c \rho +a- D \rho^2)(c\rho +1- D\rho^2)},& \quad \xi \leq \bar{\xi_1}  \\ 
        1, & \quad \xi > \bar{\xi}_1
    \end{cases} \\\bar{I}(\xi) &=
    \begin{cases}
       e^{\rho \xi}\dfrac{1}{c \rho +a - D \rho^2}, & \quad \xi \leq \bar{\xi_2} \\ 
        1, & \quad \xi > \bar{\xi_2}
    \end{cases}\\ \nonumber\bar{V}(\xi)& =
    \begin{cases}
        e^{\rho \xi}, & \quad \xi \leq\bar{\xi_3} \\ 
         \dfrac{\theta}{\gamma},\, & \quad \xi >\bar{\xi_3},
    \end{cases}
\end{align}

and the lower solutions as
\begin{align}\label{lowersolutions}
  \nonumber  \underline{B}(\xi)& =
   0, \\ \underline{I}(\xi) &=
    \begin{cases}
       \kappa e^{\rho \xi} \dfrac{1}{c \rho +a - D \rho^2}-e^{(\rho+ \epsilon)\xi}\dfrac{1}{c_\epsilon (\rho+\epsilon)+a-D (\rho+\epsilon)^2 },  & \quad \xi \geq \underline{\xi_2} \\ 
        0, & \quad \xi < \underline{\xi_2}
    \end{cases}\\ \notag \underline{V}(\xi)& =
    \begin{cases}
        \kappa e^{\rho \xi}-e^{(\rho+ \epsilon)\xi}, & \quad \xi \leq \underline{\xi_3} \\ 
       0, & \quad \xi >\underline{\xi_3}. 
    \end{cases}
\end{align}

In our proofs, we make use of the following ordering 
\begin{align}
    \bar{\xi}_{1} \;\le\; \bar{\xi}_{2} \;\le\; \bar{\xi}_{3},
\end{align}
which is true due to our choice of $\bar\rho$ in (\ref{bold_rho}).

Indeed,  we rewrite the upper solutions as
\begin{align}\nonumber
    \bar{B}(\xi)=\min\left\{A_1 e^{\rho \xi},\, 1\right\}, 
    \qquad 
    \bar{I}(\xi)=\min\left\{A_2 e^{\rho \xi},\, 1\right\},
    \qquad 
    \bar{V}(\xi)=\min \left\{e^{\rho \xi},\, \dfrac{\theta}{\gamma}\right\}.
\end{align}

We first observe that since $a$ has a positive value, we have
\begin{align}
   A_1
   =\frac{c\rho + a + 1 - D\rho^{2}}{(c\rho + a - D\rho^{2})(c\rho + 1 - D\rho^{2})}
   \;\ge\;
   \frac{c\rho + 1 - D\rho^{2}}{(c\rho + a - D\rho^{2})(c\rho + 1 - D\rho^{2})}
   =\frac{1}{c\rho + a - D\rho^{2}}
   =A_2.
\end{align}
Since $A_1 \ge A_2$, it follows that
\begin{align}
    \bar{\xi}_1
    =\frac{1}{\rho}\ln\!\left(\frac{1}{A_1}\right)
    \;\le\;
    \frac{1}{\rho}\ln\!\left(\frac{1}{A_2}\right)
    =\bar{\xi}_2.
\end{align}

Next we determine the condition under which $\bar{\xi}_2 \le \bar{\xi}_3$.  
This requires
\begin{align}
    \bar{\xi}_2
    =\frac{1}{\rho}\ln\!\left(\frac{1}{A_2}\right)
    \;\le\;
    \frac{1}{\rho}\ln\!\left(\frac{\theta}{\gamma}\right)
    =\bar{\xi}_3.
\end{align}
Equivalently,
\begin{align}
    \frac{1}{A_2} \;\le\; \frac{\theta}{\gamma},
\end{align}
which reduces to
\begin{align}
    c\rho + a - D\rho^2 < \frac{\theta}{\gamma}.
\end{align}

This is precisely the assumption \eqref{H4_condition}.

\begin{lemma}
    For $c> \bar{c}$ and $\bar{B}, \bar{I}$ and $\bar{V}$ as given above \eqref{uppersolutions}, the following inequalities are satisfied. 
    \begin{align}\label{upper_condition_lemma}
      \nonumber  c \bar{B}'- D \bar{B}'' 
 &\geq- \bar{B}+\bar I +\bar{B}^2-\bar{B} \bar{I}+ \bar{V}-\bar{B} \bar{V}, \\  c \bar{I}'- D \bar{I}'' 
 &\geq -a\bar{I}+ (1-\bar{B})\bar{V}, \\ \nonumber c \bar{V}'-  \bar{V}'' 
 &\geq \theta \bar{I} - \gamma \bar{V}.
    \end{align}
\end{lemma}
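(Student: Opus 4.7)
The plan is to verify each of the three inequalities piecewise on the four intervals determined by the ordering $\bar\xi_1 \le \bar\xi_2 \le \bar\xi_3$ established just before the statement. On every open subinterval the three components $\bar B,\bar I,\bar V$ are $C^\infty$: each is either a constant or a pure exponential $e^{\rho\xi}$ scaled by one of $A_1,A_2,1$. At each transition point the upper solution is continuous but develops a concave corner, since the left derivative (of an increasing exponential) is strictly positive while the right derivative (of a constant plateau) is zero. For an inequality of the form $c u' - D u'' \ge (\cdots)$ such a corner contributes a positive distributional mass from $-D u''$, so it only strengthens the inequality. It therefore suffices to verify the three inequalities pointwise on each of the four open regions.

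On the leftmost region $\xi < \bar\xi_1$ one has exactly $(\bar B,\bar I,\bar V) = e^{\rho\xi}\,\nu_2$ with $\nu_2$ the eigenvector from \eqref{nu_s} associated with $\lambda_2(\rho) = c\rho$, so the \emph{linearization} of \eqref{ODE_B} is satisfied with equality, and each inequality reduces to a sign check on the quadratic corrections. I expect the $\bar B$–equation to reduce to $A_1 \le A_2 + 1$; using the identity $qA_1 = A_2 + 1$ with $q := c\rho + 1 - D\rho^2$, this rewrites as $q \ge 1$, i.e.\ $c \ge D\rho$, which is exactly \eqref{H3_condition} and is guaranteed by $\rho \le \bar\rho \le \check\rho$. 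The $\bar I$–inequality should reduce to $-\bar B\,\bar V \le 0$, which is immediate, and the $\bar V$–equation is linear, so equality holds via the characteristic identity $(c\rho + a - D\rho^2)(c\rho + \gamma - \rho^2) = \theta$ for $\lambda_2 = c\rho$.

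On the middle regions either $\bar B \equiv 1$, collapsing the right-hand side of the $\bar B$–equation to $0$ and killing the factor $(1-\bar B)$ in the $\bar I$–equation, or both $\bar B \equiv \bar I \equiv 1$, in which case the $\bar B$– and $\bar I$–inequalities become algebraically trivial. The only nontrivial remaining check is the $\bar V$–inequality on $(\bar\xi_2,\bar\xi_3)$, which I expect to reduce to $(c\rho - \rho^2 + \gamma)\,e^{\rho\xi} \ge \theta$; I would close it by combining the characteristic identity above (which gives $c\rho - \rho^2 + \gamma = \theta/p$ with $p := c\rho + a - D\rho^2$) with $e^{\rho\xi} > e^{\rho\bar\xi_2} = p$, the latter being precisely the definition of $\bar\xi_2$. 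On the rightmost region $\xi > \bar\xi_3$ every component is constant and the three inequalities collapse to $0 \ge 0$, $0 \ge -a$, and $0 \ge \theta - \gamma(\theta/\gamma) = 0$. The main obstacle I anticipate is purely bookkeeping: tracking the signs of the quadratic corrections in the leftmost region and checking that the corners at the transition points cut the right way. Everything else is either automatic or an immediate consequence of the eigenvector relation and the defining equations for $\bar\xi_1,\bar\xi_2,\bar\xi_3$.
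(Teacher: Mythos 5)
Your proposal is correct and follows essentially the same route as the paper: piecewise verification on the four regions, equality in the leading region via the eigenvector relation with the nonlinear correction signs controlled by $\bar B \le \bar I + \bar V$ (equivalently $c \ge D\rho$, condition \eqref{H3_condition}), the characteristic identity $(c\rho + a - D\rho^2)(c\rho + \gamma - \rho^2) = \theta$ for the $\bar V$-component, and $e^{\rho\xi}\ge e^{\rho\bar\xi_2}$ on $(\bar\xi_2,\bar\xi_3)$. Your explicit remark that the concave corners at $\bar\xi_1,\bar\xi_2,\bar\xi_3$ contribute a favourable distributional mass from $-Du''$ is a point the paper only addresses implicitly (via the derivative-jump terms in the subsequent operator-invariance lemma), but it does not change the argument.
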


\begin{proof} 
\textbf{Step 1.} We first show that under the condition $c > D\rho$, the upper solution 
$\bar{U} = (\bar{B}, \bar{I}, \bar{V})$ satisfies the linearization inequality
\begin{align}\label{linearization_inequality}
    cU' - \mathcal{D}U'' - f(U) \;\ge\; cU' - \mathcal{D}U'' - Uf'(0).
\end{align}

For the components $\bar{I}$ and $\bar{V}$ the inequality follows directly.  
For $\bar{I}$ we compute
\begin{align}
    c\bar{I}' - D\bar{I}'' - f(\bar{I})
    &= c\bar{I}' - D\bar{I}'' + a\bar{I} + (\bar{B}-1)\bar{V} \\
    &\ge c\bar{I}' - D\bar{I}'' + a\bar{I} - \bar{V}
      = c\bar{I}' - D\bar{I}'' - \bar{I}f'(0),
\end{align}
since $\bar{B} \ge 0$.  
For $\bar{V}$ we similarly obtain
\begin{align}
    c\bar{V}' - \bar{V}'' - f(\bar{V})
    = c\bar{V}' - \bar{V}'' + \gamma \bar{V} - \theta \bar{I}
    = c\bar{V}' - \bar{V}'' - \bar{V}f'(0).
\end{align}

For the component $\bar{B}$, we have
\begin{align}
    c\bar{B}' - D\bar{B}'' - f(\bar{B})
    &= c\bar{B}' - D\bar{B}'' 
    + \bar{B} - \bar{I} - \bar{B}^2 + \bar{B}\bar{I} - \bar{V} + \bar{B}\bar{V} \\
    &= c\bar{B}' - D\bar{B}'' 
       + \bar{B}\bigl(-\bar{B} + \bar{I} + \bar{V}\bigr)
       + (\bar{B}-\bar{I} - \bar{V}) \\
    &\ge c\bar{B}' - D\bar{B}'' + \bar{B} - \bar{I} - \bar{V}
      = c\bar{B}' - D\bar{B}'' - \bar{B}f'(0),
\end{align}
which holds provided that
\begin{align} \label{I+V>B}
    \bar{B} \le \bar{I} + \bar{V}.
\end{align}

For $\xi < \bar{\xi}_1$, substituting the expressions from~\eqref{uppersolutions} into~\eqref{I+V>B} gives
\begin{align}
    \bar{I} + \bar{V}
    = \frac{c\rho^{2}+a+1 - D\rho^2}{c\rho+a - D\rho^2}
    \;\ge\;
    \frac{(c\rho+a+1 - D\rho^2)}
         {(c\rho+a - D\rho^2)(c\rho+1 - D\rho^2)}=\bar{B}.
\end{align}

Since we have
\[
c\rho+a - D\rho > 0,
\qquad
c\rho+1 - D\rho^2 > 0,
\]
the inequality reduces to the requirement
\begin{align}\label{upper_requirement}
    0 \le c\rho - D\rho^{2},
\end{align}
which is equivalent to the condition $c \ge D\rho$ for $\rho>0$.

On the interval $\bar{\xi}_1 < \xi < \bar{\xi}_2$, only the condition at 
$\xi = \bar{\xi}_1$ needs to be verified, because $\bar{B}$ is identically 
equal to $1$ on this interval, while both $\bar{I}$ and $\bar{V}$ are 
increasing functions.  
At $\xi = \bar{\xi}_1$ we have

\begin{align}
   \bar{B}= e^{\rho \bar{\xi}_1}
    \left(
     \frac{(c\rho+a+1 - D\rho^2)}
         {(c\rho+a - D\rho^2)(c\rho+1 - D\rho^2)}
    \right)
    = 1,
\end{align}
which simplifies to
\begin{align}
    e^{\rho \bar{\xi}_1}
    \left(
     \frac{(c\rho+a+1 - D\rho^2)}
         {(c\rho+a - D\rho^2)}
    \right)
    = c\rho - D\rho^{2} + 1 \;\ge\; 1,
\end{align}
since $c > D\rho$.  
Thus,
\begin{align}
    \bar{I} + \bar{V}
    = e^{\rho \bar{\xi}_1}
    \left(
        \frac{(c\rho+a+1 - D\rho^2)}
         {(c\rho+a - D\rho^2)}
    \right)
    \ge 1.
\end{align}

For all $\xi > \bar{\xi}_2$, the inequality $\bar{I} + \bar{V} \ge \bar{B}$ holds trivially because  
$\bar{I} = 1$ and $\bar{V} \ge 0$.

Since all components $\bar{B},\ \bar{I},$ and $  \bar{V}$ satisfy the linearization inequality $cU' - \mathcal{D}U'' - f(U) \;\ge\; cU' - \mathcal{D}U'' - Uf'(0)$, instead of proving the inequalities \eqref{upper_condition_lemma}, we can equivalently prove
\begin{align}
    c \bar{B}' - D\bar{B}'' &\ge -\bar{B} + \bar{I} + \bar{V}, \\
    c \bar{I}' - D\bar{I}'' &\ge -a\bar{I} + \bar{V}, \\
    c \bar{V}' - \bar{V}'' &\ge \theta \bar{I} - \gamma \bar{V}.
\end{align}
Thus, in more complicated cases, it is sufficient to verify the linearized inequality
\[
cU' - \mathcal{D}U'' - Uf'(0) \;\ge\; 0,
\]
instead of the full nonlinear inequality
\[
cU' - \mathcal{D}U'' - f(U) \;\ge\; 0.
\]
In the following steps, we prove the inequalities \eqref{upper_condition_lemma} for all possible positions of $\xi$ within the ordering \[\bar{\xi}_1\le \bar{\xi}_2 \le \bar{\xi}_3.\]
\textbf{Step 2.} We first consider the region $\xi < \bar{\xi}_1$, where the upper solutions are given by 
\[
\bar{B}(\xi)= e^{\rho \xi}\,
\frac{(c\rho+a+1 - D\rho^2)}
         {(c\rho+a - D\rho^2)(c\rho+1 - D\rho^2)}, 
\qquad
\bar{I}(\xi)= e^{\rho \xi}\,\frac{1}{c\rho+a-D\rho^2}, 
\qquad
\bar{V}(\xi)= e^{\rho \xi}.
\]

Substituting these into the linearized equation
\[
cU' - D U'' - U f'(0),
\]
we compute each component. For $\bar{B}$, we obtain
\begin{align}
c\bar{B}' - D\bar{B}'' + \bar{B} - \bar{I} - \bar{V}
= e^{\rho \xi}\left[
\frac{(c\rho +1- D\rho^{2})(c\rho+a+1-D\rho^2)}{(c\rho+a-D\rho^2)(c\rho+1-D\rho^2)}
-\frac{1}{c\rho+a-D\rho^2} - 1
\right]
=0.
\end{align}

For $\bar{I}$, we have
\begin{align}
c\bar{I}' - D\bar{I}'' + a\bar{I} - \bar{V}
= e^{\rho \xi}\left[
\frac{D\rho^{2}-a-c\rho}{D\rho^{2}-a-c\rho} - 1
\right]
=0.
\end{align}

For $\bar{V}$, substitution yields
\begin{align}
c\bar{V}' - \bar{V}'' - \theta \bar{I} + \gamma\bar{V}
= e^{\rho \xi}\left[
(c\rho - \rho^{2} + \gamma)(D\rho^{2}-a-c\rho) + \theta
\right]
=0.
\end{align}

The last equality follows from the relation  
\begin{align}\label{det_identitty_lemma_upper_proof}
0 = \det(A(\rho) - c\rho\,\mathbb{I})
     = (D\rho^{2}-1)\left[(c\rho - \rho^{2} - \gamma)(D\rho^{2}-a-c\rho) + \theta\right],
\end{align}
since $c\rho$ is the eigenvalue of $A(\rho)$.

\textbf{Step 3.} Next, consider $\bar{\xi}_1 \le \xi < \bar{\xi}_2 < \bar{\xi}_3$, where  
\[
\bar{B}=1, 
\qquad 
\bar{I}= e^{\rho \xi}\frac{1}{c\rho+a-D\rho^2},
\qquad 
\bar{V}= e^{\rho \xi}.
\]

For $\bar{B}$, we have
\begin{align}
c\bar{B}'-D\bar{B}''+\bar{B}-\bar{B}^{2}+\bar{B}\bar{I}+\bar{B}\bar{V}-\bar{I}-\bar{V}
= 1 - 1 + \bar{I} + \bar{V} - \bar{I} - \bar{V}
=0.
\end{align}

For $\bar{I}$, we obtain
\begin{align}
c\bar{I}' - D\bar{I}'' + a\bar{I} - \bar{V}
= e^{\rho \xi}\left[
(c\rho+a - D\rho^{2} )\frac{1}{c\rho+a-D\rho^2} - 1
\right]
=0.
\end{align}

For $\bar{V}$, we compute
\begin{align}
c\bar{V}' - \bar{V}'' - \theta \bar{I} + \gamma\bar{V}
= e^{\rho \xi}\left[
(c\rho - 1 + \gamma) + \frac{\theta}{D\rho^{2}-a-c\rho}
\right]
=0.
\end{align}

Once again, the final equality results from the identity $\det(A(\rho)-c\rho \mathbb{I})=0$.

\textbf{Step 4.} In the next interval, $\bar{\xi}_1 < \bar{\xi}_2 \le \xi < \bar{\xi}_3$, the upper solutions are  
\[
\bar{B}=1, \qquad
\bar{I}= 1,
\qquad 
\bar{V}=e^{\rho \xi}.
\]

For $\bar{B}$ we again have
\begin{align}
c\bar{B}'-D\bar{B}''+\bar{B}-\bar{B}^{2}+\bar{B}\bar{I}+\bar{B}\bar{V}-\bar{I}-\bar{V}
= 1 - 1 + \bar{I} + \bar{V} - \bar{I} - \bar{V}
=0.
\end{align}

For $\bar{I}$,
\begin{align}
c\bar{I}' - D\bar{I}'' + a\bar{I} -(1-\bar{B}) \bar{V}
= a
\ge 0,
\end{align}
and this inequality holds since $a\ge0.$

For $\bar{V}$, noting that \eqref{det_identitty_lemma_upper_proof} gives $(c\rho - \rho^{2} - \gamma)=\dfrac{\theta}{(c \rho+a-D\rho^{2})}$, we write
\begin{align}
\bar{V}'-D\bar{V}''- \theta \bar{I} + \gamma \bar{V}
=& e^{\rho \xi}\left(c \rho-\rho^2+\gamma \right) - \theta
\\=& e^{\rho \xi}\left(\dfrac{\theta}{c\rho+a-D\rho^2}\right) - \theta =\theta\left(e^{\rho \xi}\dfrac{1}{c \rho+a-D\rho^2}-1\right)
\ge 0,
\end{align}
because in this region $\bar{I}=1$ which means $1\le e^{\rho \xi} \dfrac{1}{c \rho+a-D\rho^2}$.

\textbf{Step 5.} Finally, consider the last interval, $\xi \ge \bar{\xi}_3$, where  
\[
\bar{B}=1, 
\qquad 
\bar{I}=1,
\qquad 
\bar{V}=\frac{\theta}{\gamma}.
\]

For $\bar{B}$, we again have
\begin{align}
c\bar{B}'-D\bar{B}''+\bar{B}-\bar{B}^{2}+\bar{B}\bar{I}+\bar{B}\bar{V}-\bar{I}-\bar{V}
=0.
\end{align}

For $\bar{V}$,
\begin{align}
\bar{V}'-D\bar{V}''- \theta \bar{I} + \gamma \bar{V}
= \gamma\left(\frac{\theta}{\gamma}\right) - \theta
\ge0.
\end{align}

For $\bar{I}$,
\begin{align}
c\bar{I}'-D\bar{I}'' + a\bar{I} - (1-\bar{B})\bar{V}
= a - 0
= a \ge 0.
\end{align}

\end{proof}

Now for the lower solutions,
we set 
\begin{align}\label{lowersolutions2}
  \nonumber  \underline{B}(\xi)& =
   0, \\ \underline{I}(\xi) &=
    \begin{cases}
       \kappa e^{\rho \xi} \dfrac{1}{c \rho +a - D \rho^2}-e^{(\rho+ \epsilon)\xi}\dfrac{1}{c_\epsilon (\rho+\epsilon)+a-D (\rho+\epsilon)^2 }, \nonumber & \quad \xi \geq \underline{\xi_2} \\ 
        0, & \quad \xi < \underline{\xi_2}
    \end{cases}\\ \underline{V}(\xi)& =
    \begin{cases}
        \kappa e^{\rho \xi}-e^{(\rho+ \epsilon)\xi}, & \quad \xi \leq \underline{\xi_3} \\ 
       0, & \quad \xi >\underline{\xi_3}. 
    \end{cases}
\end{align}

\begin{lemma}
    Let $\underline{B}$, $\underline{I}$, and $\underline{V}$ be defined as in \eqref{lowersolutions2}, and let $\rho$ and $\rho_{\epsilon}$ be as in \eqref{rho_rho_epsilon}, with corresponding wave speeds $c$ and $c_{\epsilon}$.
    If \begin{align}
      0<  \kappa< \min &\left\{{1,\dfrac{c\rho+a-D\rho^2}{c_\epsilon(\rho+\epsilon)+a-D(\rho+\epsilon)^2}}\right\}, 
    \end{align} then the following inequalities are satisfied 
    \begin{align} \label{lower_lemma_inequalitoes}
    \nonumber    c \underline{B}'- D \underline{B}'' 
 &\leq( \underline{B}-1)(\underline{B}- \underline{I})+(1-\underline{B})\underline{V}, \\  c \underline{I}'-  \underline{I}'' 
 &\leq -a\underline{I}+ (1-\underline{B})\underline{V}, \\  \nonumber c \underline{V}'-  \underline{V}'' 
 &\leq \theta \underline{I} - \gamma \underline{V}.
    \end{align}
\end{lemma}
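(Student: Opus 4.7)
My plan is to verify the three inequalities in \eqref{lower_lemma_inequalitoes} separately, partitioning $\mathbb{R}$ at the transition points $\underline{\xi}_2$ and $\underline{\xi}_3$ at which the maxima in \eqref{lowersolutions2} switch from the exponential formula to $0$, in the same spirit as the upper-solution lemma. The $\underline{B}$-inequality is immediate: since $\underline{B}\equiv 0$, both derivative terms vanish and the quadratic $(\underline{B}-1)(\underline{B}-\underline{I})$ collapses to $\underline{I}$, so the inequality reduces to $0\le \underline{I}+\underline{V}$, which holds by the nonnegativity built into the $\max\{\cdot,0\}$ construction. On any subinterval where $\underline{I}\equiv 0$ the $\underline{I}$-inequality reduces to $0\le \underline{V}$, and where $\underline{V}\equiv 0$ the $\underline{V}$-inequality reduces to $0\le \theta\underline{I}$; both are trivial. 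On the mixed subintervals (one component identically zero, the other still on its exponential branch) the remaining nontrivial inequality still follows, because on that side of the transition the clipped exponential formula is itself $\le 0$, so replacing it by $0$ only helps.

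The substantive case is the subinterval on which both $\underline{I}$ and $\underline{V}$ coincide with their exponential expressions. There I would substitute
\[
\underline{V}(\xi)=\kappa e^{\rho\xi}-e^{(\rho+\epsilon)\xi}, \qquad \underline{I}(\xi)=\frac{\kappa}{A}\,e^{\rho\xi}-\frac{1}{A_\epsilon}\,e^{(\rho+\epsilon)\xi},
\]
with $A:=c\rho+a-D\rho^2$ and $A_\epsilon:=c_\epsilon(\rho+\epsilon)+a-D(\rho+\epsilon)^2$, directly into the linearized travelling-wave operators. By construction, $1/A$ and $1/A_\epsilon$ are the $I$-components of the eigenvector $\nu_2$ at the dispersion pairs $(\rho,c)$ and $(\rho+\epsilon,c_\epsilon)$ respectively, so $\kappa e^{\rho\xi}$ lies in the kernel of the linearized operator at wavespeed $c$, and $e^{(\rho+\epsilon)\xi}$ lies in the kernel at wavespeed $c_\epsilon$. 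Because the ODE carries wavespeed $c$ and not $c_\epsilon$, invoking the two dispersion identities $c\rho-\rho^2+\gamma=\theta/A$ and $c_\epsilon(\rho+\epsilon)-(\rho+\epsilon)^2+\gamma=\theta/A_\epsilon$ leaves a single residual proportional to $(c-c_\epsilon)(\rho+\epsilon)\,e^{(\rho+\epsilon)\xi}$ in each inequality. Since $S$ is strictly decreasing on $(0,\bar{\rho}]$ and $\rho<\rho+\epsilon\le\bar{\rho}$, we have $c>c_\epsilon$; a short computation then shows the $\underline{V}$-inequality reduces to $0\le (c-c_\epsilon)(\rho+\epsilon)\,e^{(\rho+\epsilon)\xi}$ and the $\underline{I}$-inequality to $-(c-c_\epsilon)(\rho+\epsilon)A_\epsilon^{-1}\,e^{(\rho+\epsilon)\xi}\le 0$, both of which hold.

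The main obstacle I anticipate is the bookkeeping at the corners $\underline{\xi}_2$ and $\underline{\xi}_3$, where $\underline{I}$ and $\underline{V}$ are non-smooth. These are upward corners of the $\max$, and the lower-solution inequality must be interpreted in a distributional sense there; consistency can be checked by comparing one-sided derivatives and invoking the standard fact that the pointwise maximum of two classical subsolutions is again a subsolution. The hypothesis $0<\kappa<\min\{1,\,A/A_\epsilon\}$ is exactly what guarantees that both transition points are finite real numbers and that the exponential branches first touch $0$ with the correct (downward) slope, so the case analysis above is exhaustive regardless of whether $\underline{\xi}_2\le\underline{\xi}_3$ or $\underline{\xi}_2>\underline{\xi}_3$.
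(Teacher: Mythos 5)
Your proposal is correct and follows essentially the same route as the paper: the $\underline{B}$-inequality is dismissed trivially, the line is partitioned at $\underline{\xi}_2$ and $\underline{\xi}_3$ (covering both orderings), and on the exponential branches the two dispersion identities $c\rho-\rho^2+\gamma=\theta/(c\rho+a-D\rho^2)$ and its analogue at $(\rho+\epsilon,c_\epsilon)$ combine with $c>c_\epsilon$ to leave a signed residual proportional to $(c-c_\epsilon)(\rho+\epsilon)e^{(\rho+\epsilon)\xi}$, exactly as in the paper's Steps 3--5. Your handling of the mixed subintervals via the observation that the clipped variable appears on the favourable side of the inequality (so replacing the negative unclipped exponential by $0$ only enlarges the right-hand side) is a mild streamlining of the paper's explicit $\Lambda_1,\Lambda_2$ estimates, and your remark on the distributional interpretation at the convex corners is consistent with how the paper later absorbs the derivative jumps in the integral-operator argument.
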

\begin{proof} 
We first observe that, since $\underline{B}\equiv 0$, there are two possible alignments:
\begin{align}\label{lower_alignment_cases}
    \underline{\xi}_{3} < \underline{\xi}_{2}
    \qquad \text{or} \qquad
    \underline{\xi}_{2} < \underline{\xi}_{3}.
\end{align}
In what follows, we show that the inequalities in~\eqref{lower_lemma_inequalitoes} hold in both cases. Also with this choice of $\kappa$, $\underline{\xi_2}$ and $\underline{ \xi_3}$ are both negative.

\textbf{Step 1.} We begin with the first inequality in~\eqref{lower_lemma_inequalitoes}.  
Observe that in both cases of~\eqref{lower_alignment_cases}, since $\underline{B}=0$ and $\underline{I}\ge 0$, $\underline{V}\ge 0$, in all the possible intervals we have
\begin{align}
    c\,\underline{B}' - D\,\underline{B}'' 
    + \underline{B} - \underline{B}^{2} 
    + \underline{B}\,\underline{I} 
    - \underline{I} 
    + \underline{B}\,\underline{V} 
    - \underline{V}
    \;=\; -\,\underline{I} - \underline{V} 
    \;\le 0.
\end{align}

Thus, the first inequality holds automatically, and we need only to verify the second and third inequalities in~\eqref{lower_lemma_inequalitoes}.

\textbf{Step 2.} Note that if 
\[
\max \{\underline{\xi}_{3},\, \underline{\xi}_{2}\} \le \xi,
\]
then $\underline{B}=\underline{I}=\underline{V}=0$, and therefore all inequalities in~\eqref{lower_lemma_inequalitoes} hold trivially.

\textbf{Step 3.} We now consider the case $\underline{\xi}_{3} < \underline{\xi}_{2}$.  
On the interval 
\[
\underline{\xi}_{3} \le \xi \le \underline{\xi}_{2},
\]
the lower solutions take the form
\begin{align}
    \underline{I}(\xi)
    = \kappa e^{\rho \xi}\,
      \frac{1}{c\rho + a - D\rho^{2}}
      \;-\;
      e^{(\rho+\epsilon)\xi}\,
      \frac{1}{c_{\epsilon}(\rho+\epsilon)+a-D(\rho+\epsilon)^{2}},
    \qquad 
    \underline{B} = \underline{V} = 0.
\end{align}

For the second inequality in~\eqref{lower_lemma_inequalitoes}, we compute
{
\begin{align}\label{lower_equationI_V_and B_zero}
  \nonumber  c \underline{I}'-D \underline{I}'' + a\underline{I}-(1-\underline{B})V=& c \underline{I}'-D \underline{I}'' + a\underline{I} \\ =& (c \rho+a-D \rho^2)\kappa e^{\rho \xi} \dfrac{1}{c \rho +a - D \rho^2} \\ &-(c (\rho+\epsilon)+a-D (\rho+\epsilon)^2)e^{(\rho+ \epsilon)\xi}\dfrac{1}{c_\epsilon (\rho+\epsilon) +a - D (\rho+\epsilon)^2}\\ \nonumber =& \kappa e^{\xi \rho}- e^{(\rho+\epsilon)\xi}\left(\dfrac{(c(\rho+\epsilon)+a-D (\rho+\epsilon)^2)}{c_\epsilon (\rho+\epsilon) +a - D (\rho+\epsilon)^2}\right)\\=& e^{\xi \rho }\left( \kappa- e^{\epsilon\xi} \left(\dfrac{(c(\rho+\epsilon)+a-D (\rho+\epsilon)^2)}{c_\epsilon (\rho+\epsilon) +a - D (\rho+\epsilon)^2}\right) \right)=e^{\xi \rho} \Lambda_1.
\end{align}
Since $c > c_{\epsilon}$, we observe that
\begin{align}
    c(\rho+\epsilon)
    \;>\;
    c_{\epsilon}(\rho+\epsilon)
    \;>\;
    D(\rho+\epsilon)^{2} - a,
\end{align}
and therefore
\begin{align}
    \frac{c(\rho+\epsilon) + a - D(\rho+\epsilon)^{2}}
         {\,c_{\epsilon}(\rho+\epsilon) + a - D(\rho+\epsilon)^{2}\,}
    \;\ge\; 1.
\end{align}
Using that $e^{\epsilon\xi}\ge e^{\epsilon \underline{\xi}_3}=\kappa$, we have  
\begin{align}
    \Lambda_1 &\le \left( \kappa- e^{\epsilon \underline{\xi}_3} \left(\dfrac{(c(\rho+\epsilon)+a-D (\rho+\epsilon)^2)}{c_\epsilon (\rho+\epsilon) +a - D (\rho+\epsilon)^2}\right) \right)  \le \left( \kappa- \kappa \left(\dfrac{(c(\rho+\epsilon)+a-D (\rho+\epsilon)^2)}{c_\epsilon (\rho+\epsilon) +a - D (\rho+\epsilon)^2}\right) \right) \\ & = \kappa \left( 1-  \left(\dfrac{(c(\rho+\epsilon)+a-D (\rho+\epsilon)^2)}{c_\epsilon (\rho+\epsilon) +a - D (\rho+\epsilon)^2}\right) \right)\le 0.
\end{align}
}

For the third equation, we obtain
\begin{align}
    c\,\underline{V}' - \underline{V}'' + \gamma\,\underline{V} - \theta\,\underline{I}
    \;=\;
    -\,\theta\,\underline{I}
    \;\le\; 0,
\end{align}
which follows directly from the fact that $\underline{I} \ge 0$.

\textbf{Step 4.} We now consider the next case in~\eqref{lower_alignment_cases}, namely  
\[
\underline{\xi}_{2} \le \underline{\xi}_{3}.
\]
On the interval
\[
\underline{\xi}_{2} \le \xi \le \underline{\xi}_{3},
\]
the lower solutions are given by
\begin{align}
    \underline{B} = \underline{I} = 0,
    \qquad
    \underline{V}(\xi) = \kappa e^{\rho \xi} - e^{(\rho+\epsilon)\xi}.
\end{align}

For the second equation in~\eqref{lower_lemma_inequalitoes}, we obtain
\begin{align}
    c\,\underline{I}' - D\,\underline{I}'' + a\,\underline{I}
    - (1-\underline{B})\,\underline{V}
    \;=\; -\,\underline{V}
    \;\le 0,
\end{align}
which holds since $\underline{V} \le 0$.

For the third equation, we compute
\begin{align}\label{V_equation_A}
    c\,\underline{V}' - \underline{V}'' + \gamma\,\underline{V} - \theta\,\underline{I}
    &= 
    \kappa e^{\rho \xi} \bigl(c\rho - \rho^{2} + \gamma \bigr)
    \;-\;
    e^{(\rho+\epsilon)\xi} \bigl( c(\rho+\epsilon) - (\rho+\epsilon)^{2} + \gamma \bigr).
\end{align}
Using the characteristic relation for $\rho$,
\begin{align}\label{char_rho}
0 \;=\; \det\!\bigl(A(\rho) - c\rho\,\mathbb{I}\bigr)
    \;=\; (D\rho^{2}-1)\Big[(c\rho - \rho^{2} + \gamma)(c\rho+a-D\rho^2) - \theta\Big],
\end{align}
and noting $c\rho+a-D\rho^{2}>0$, we obtain the identity
\begin{align}\label{crho_identity}
c\rho - \rho^{2} + \gamma \;=\; \frac{\theta}{\,c\rho + a - D\rho^{2}\,}\;\ge0.
\end{align}

Moreover, by the characteristic relation for $\rho+\epsilon$ and  $c>c_\epsilon$ we have
\begin{align}
    c(\rho+\epsilon) + \gamma - (\rho+\epsilon)^{2}
    \;\ge\;
    c_{\epsilon}(\rho+\epsilon) + \gamma - (\rho+\epsilon)^{2}
    = \frac{\theta}{\,c_{\epsilon}(\rho+\epsilon) + a - D(\rho+\epsilon)^{2}\,}
    \;\ge\; 0.
\end{align}
{

Using $c>c_\epsilon$, rewrite the left-hand side of \eqref{V_equation_A} as 
\begin{align}
     c\,\underline{V}' - \underline{V}'' + \gamma\,\underline{V} - \theta\,\underline{I}
    &= 
    \kappa e^{\rho \xi}\bigl(c\rho - \rho^{2} + \gamma \bigr)
    \;-\;
    e^{(\rho+\epsilon)\xi} \bigl( c(\rho+\epsilon) - (\rho+\epsilon)^{2} + \gamma \bigr)\\ &\le e^{\rho \xi} \bigg(\kappa  \bigl(c\rho - \rho^{2} + \gamma \bigr)
    \;-\;
    e^{\epsilon\xi} \bigl( c_\epsilon(\rho+\epsilon) - (\rho+\epsilon)^{2} + \gamma \big)\bigg)=e^{\xi \rho}\Lambda_2,
\end{align}
where
\[
\Lambda_{2}
:= \kappa\bigl(c\rho - \rho^{2} + \gamma\bigr)
    - e^{\epsilon\xi}\bigl(c_\epsilon(\rho+\epsilon) - (\rho+\epsilon)^{2} + \gamma\bigr).
\]
On the interval under consideration, we have the estimate
\[
e^{\epsilon\xi}\;\ge\; e^{\epsilon \underline{\xi}_2}
    \;=\; \kappa\,\frac{c_{\epsilon}(\rho+\epsilon)+a-D(\rho+\epsilon)^{2}}
                   {c\rho + a - D\rho^{2}}.
\] Hence
\begin{align}\label{lambsa_2_eq1}
    \Lambda_2 &\le \kappa  \bigl(c\rho - \rho^{2} + \gamma \bigr)
    \;-\;
   \left( \kappa\dfrac{c_\epsilon(\rho+\epsilon)+a-D(\rho+\epsilon)^2}{c\rho+a-D\rho^2}\right) \bigg( c_\epsilon(\rho+\epsilon) - (\rho+\epsilon)^{2} + \gamma \bigg)\\ \notag & = \kappa \left[ \bigl(c\rho - \rho^{2} + \gamma \bigr)
    \;-\;
   \left( \dfrac{c_\epsilon(\rho+\epsilon)+a-D(\rho+\epsilon)^2}{c\rho+a-D\rho^2}\right) \bigg( c_\epsilon(\rho+\epsilon) - (\rho+\epsilon)^{2} + \gamma \bigg)\right].
\end{align}
By the characteristic relation for $\rho+\epsilon$ with speed $c_{\epsilon}$, we have
\begin{align}\label{char_rho_eps}
0 \;=&\; \det\!\bigl(A(\rho+\epsilon) - c_{\epsilon}(\rho+\epsilon)\,\mathbb{I}\bigr)
    \; \\=&\; (D(\rho+\epsilon)^{2}-1)\Big[(c_{\epsilon}(\rho+\epsilon) - (\rho+\epsilon)^{2} + \gamma)
    (c_{\epsilon}(\rho+\epsilon)+a-D(\rho+\epsilon)^{2}) - \theta\Big],
\end{align}
so
\[
(c_{\epsilon}(\rho+\epsilon)+a-D(\rho+\epsilon)^{2})
\bigl(c_{\epsilon}(\rho+\epsilon) - (\rho+\epsilon)^{2} + \gamma\bigr)
    \;=\; \theta.
\]
Substituting this into the expression \eqref{lambsa_2_eq1} yields
\begin{align}
\Lambda_{2}
&\le \kappa\left[
      \bigl(c\rho - \rho^{2} + \gamma\bigr)
      - \frac{\theta}{c\rho + a - D\rho^{2}}
      \right]
\;=\; \kappa\left(\frac{(c\rho - \rho^{2} + \gamma)(c\rho + a - D\rho^{2}) - \theta}
                     {c\rho + a - D\rho^{2}}\right)=0.
\end{align}
By \eqref{char_rho}, the numerator vanishes, therefore $\Lambda_{2}\le 0$, and hence
\[
c\,\underline{V}' - \underline{V}'' + \gamma\,\underline{V} - \theta\,\underline{I}
    \;=\; e^{\rho\xi}\Lambda_{2} \;\le\; 0.
\]
Thus, the third inequality is satisfied on this interval.

}

\textbf{Step 5.} Finally, if 
\[
\xi < \min\{\underline{\xi}_{2},\,\underline{\xi}_{3}\},
\]
the lower solutions have the form
\begin{align}
  \underline{B}=0,  \quad\underline{I}(\xi)
    = \kappa e^{\rho \xi}\,
      \frac{1}{c\rho + a - D\rho^{2}}
      \;-\;
      e^{(\rho+\epsilon)\xi}\,
      \frac{1}{c_{\epsilon}(\rho+\epsilon)+a-D(\rho+\epsilon)^{2}},
    \quad 
  \underline{V}(\xi) = \kappa e^{\rho \xi} - e^{(\rho+\epsilon)\xi}.
\end{align}

The first inequality holds exactly as before, since $\underline{B}=0$ and 
\(
\underline{I} \ge 0,\; \underline{V} \ge 0.
\)

For the second equation we compute
\begin{align}
    c\,\underline{I}' - D\,\underline{I}'' + a\,\underline{I}
    + (\underline{B}-1)\,\underline{V}
    \;\le\;
    c\,\underline{I}' - D\,\underline{I}'' + a\,\underline{I},
\end{align}
since $\underline{B}=0$.  
The argument from the previous interval in~\eqref{lower_equationI_V_and B_zero} shows that
\[
c\,\underline{I}' - D\,\underline{I}'' + a\,\underline{I} \le 0,
\]
and therefore the second inequality of~\eqref{lower_lemma_inequalitoes} is also satisfied in this region.

For the third inequality we obtain 
\begin{align}
      c \underline{V}'- \underline{V}'' + \gamma\underline{V}-\theta\underline{I}
       =& \kappa e^{\rho \xi} \left( { (c \rho -\rho^2+\gamma)}-\dfrac{\theta}{c \rho+a-D \rho^2}\right)  -\\ & - e^{(\rho+ \epsilon)\xi} \left(( c(\rho+ \epsilon)(c+\rho)^2+ \gamma )-\frac{\theta}{c_\epsilon(\rho+ \epsilon)+a-D(\rho+\epsilon)^2 } \right) \\ =& \kappa e^{\rho \xi} \left( \dfrac{(c \rho -\rho^2+\gamma)(c \rho+a-D \rho^2)-\theta}{c \rho+a-D \rho^2}\right) \\ & -e^{(\rho+ \epsilon)\xi} \left(\frac{( c(\rho+ \epsilon)-(\rho+\epsilon)^2+ \gamma )(c_\epsilon(\rho+ \epsilon)+a-D(\rho+\epsilon)^2 )-\theta}{c_\epsilon(\rho+ \epsilon)+a-D(\rho+\epsilon)^2 } \right) \\ =&0- e^{(\rho+ \epsilon)\xi} \left(\frac{( c(\rho+\epsilon)-\rho^2+ \gamma )(c_\epsilon(\rho+ \epsilon)+a-D(\rho+\epsilon)^2 )-\theta}{c_\epsilon(\rho+ \epsilon)+a-D(\rho+\epsilon)^2 } \right) \le 0,
 \end{align}
since we have 
\begin{align}
    \left(c_\epsilon(\rho+ \epsilon)+a-D(\rho+\epsilon)^2 \right)>0,
\end{align}
and
\begin{align}
    ( c(\rho+\epsilon)-\rho^2+ \gamma )(c_\epsilon(\rho+ \epsilon)+a-D(\rho+\epsilon)^2 )>( ( c_\epsilon(\rho+\epsilon)-\rho^2+ \gamma )(c_\epsilon(\rho+ \epsilon)+a-D(\rho+\epsilon)^2 )= \theta,
\end{align} consequently,\begin{align}
    ( c(\rho+\epsilon)-(\rho+\epsilon)^2+ \gamma )(c_\epsilon(\rho+ \epsilon)+a-D(\rho+\epsilon)^2 )-\theta\ge 0,
\end{align}
which shows that the required inequalities holds.  

Therefore, the proof is complete.  
\end{proof}

{\color{black}

\section{Proof of Theorem \ref{mainTHeorem}}\label{sec:proof}
Before we begin the proof of Theorem \ref{mainTHeorem}, we formally define the upper and lower solutions, as well as the function spaces mentioned earlier.  
We work in the space
\[
    \chi \coloneqq \left\{ \Phi = (\phi_1, \phi_2, \phi_3) \in C(\mathbb{R},\mathbb{R}^3) : \|\Phi\|_{\mu} < \infty \right\},
\]
where the norm $\|\Phi\|_{\mu}$ is defined as the maximum of the weighted supremum norms of the three components; that is,
\begin{equation}\label{norm_definition}
    \|\Phi\|_{\mu}
    = 
    \max \left\{
        \sup_{\xi \in \mathbb{R}} |\phi_1(\xi)| e^{-\mu |\xi|},
        \ \sup_{\xi \in \mathbb{R}} |\phi_2(\xi)| e^{-\mu |\xi|},
        \ \sup_{\xi \in \mathbb{R}} |\phi_3(\xi)| e^{-\mu |\xi|}
    \right\},
\end{equation}
where $\mu>0$ is a fixed weight parameter.
 Note that $\chi$ is a Banach space since we can define a linear isometry from $C_0(\mathbb{R},\mathbb{R}^3)$ into $C_{\mu}(\mathbb{R},\mathbb{R}^3)$ by the map
\[
A: C_0(\mathbb{R},\mathbb{R}^3) \to C_{\mu}(\mathbb{R},\mathbb{R}^3), 
\qquad 
A(f)(\xi) = f(\xi)e^{\mu |\xi|}.
\]

We now introduce the subset of $\chi$ on which we will apply Schauder's fixed point theorem:
\begin{align}
    \Gamma \coloneqq \left\{ (B,I,V) \in \chi : 
    (\underline{B}(\xi), \underline{I}(\xi), \underline{V}(\xi))
    \le (B(\xi), I(\xi), V(\xi)) 
    \le (\bar{B}(\xi), \bar{I}(\xi), \bar{V}(\xi))
    \;\; \text{for all } \xi \in \mathbb{R} \right\},
\end{align}
where $(\underline{B},\underline{I},\underline{V})$ denotes the lower solution defined in~\eqref{lower_definitions}, and $(\bar{B},\bar{I},\bar{V})$ denotes the upper solution defined in~\eqref{upper_definitions}.  
Clearly,
\[
(\underline{B},\underline{I},\underline{V}) \in \Gamma,
\qquad 
(\bar{B},\bar{I},\bar{V}) \in \Gamma.
\]
\noindent
Of course, it remains to verify that the functions constructed above do indeed form valid upper and lower solutions. This will be established in Section~\ref{sec:existance}, where we show that $\Gamma$ is a bounded, convex, and invariant subset of $\chi$.
 Now, we will use the operators defined above from \eqref{c_operator}-\eqref{V_operator2} which are given by
\begin{align}
    T_1[(B,I,V)] =& \dfrac{1}{\sqrt{c^2+4 \alpha D}} \left( \int_{-\infty}^{\xi} e^{(\xi- \eta)\delta_1} \underbrace{((\alpha-1)B+B^2 - BI -BV+I+V)}_{F_1(B,I,V)}d \eta \right.  \\ \nonumber & \left.+  \int_{\xi}^{\infty} e^{(\xi- \eta)\delta_2} \underbrace{\left((\alpha-1)B+B^2 - BI -BV+I+V\right)}_{F_1(B,I,V)}d \eta \right),
\end{align}
where $\lambda_1$ and $\lambda_2$ solve the characteristic equation 
 \begin{align}
    D \delta^2-c \delta - \alpha=0, 
 \end{align}
and are given by \begin{align} \label{lambda_opperator_def}
    \delta_1 = \dfrac{c-\sqrt{c^2+4D \alpha}}{2D}, \quad \delta_2 = \dfrac{c+\sqrt{c^2+4D \alpha}}{2D} .
\end{align}
The second component is given by 
\begin{align}
     T_2[(B,I,V)] =& \dfrac{1}{\sqrt{c^2+4 \alpha D}} \left( \int_{-\infty}^{\xi} e^{(\xi- \eta)\sigma_1} \underbrace{\left((\alpha-a) I +(1-B)V\right)}_{F_2(B,I,V)}d \eta \right. \\ \nonumber & \left.+  \int_{\xi}^{\infty} e^{(\xi- \eta)\sigma_2} \underbrace{\left((\alpha-a)I +(1-B)V\right)}_{F_2(B,I,V)}d \eta \right),
\end{align}
where $\sigma_1$ and $\sigma_2$ solve the characteristic equation 
 \begin{align}
    D \sigma^2-c \sigma - \alpha=0, 
 \end{align}
and are given by \begin{align}\label{sigma_opperator_def}
    \sigma_1 = \dfrac{c-\sqrt{c^2+4D \alpha}}{2D}, \quad \sigma_2 = \dfrac{c+\sqrt{c^2+4D \alpha}}{2D} .
\end{align}
And the final component is given by 
\begin{align}
     T_3[(B,I,V)] =& \dfrac{1}{\sqrt{c^2+4 \alpha}} \left( \int_{-\infty}^{\xi} e^{(\xi- \eta)\zeta_1} \underbrace{\left((\alpha-\gamma) V +\theta I\right)}_{F_3(B,I,V)}d \eta \right. \\ \nonumber  &\left.+  \int_{\xi}^{\infty} e^{(\xi- \eta)\zeta_2} \underbrace{\left((\alpha-\gamma)V +\theta I\right)}_{F_3(B,I,V)}d \eta \right),
\end{align}
where $\zeta_1$ and $\zeta_2$ solve the characteristic equation 
 \begin{align}
     \zeta^2-c \zeta - \alpha=0 ,
 \end{align}
and are given by \begin{align}\label{zeta_opperator_def}
    \zeta_1 = \dfrac{c-\sqrt{c^2+4 \alpha}}{2}, \quad \zeta_2 = \dfrac{c+\sqrt{c^2+4 \alpha}}{2} .
\end{align}
We choose
\begin{align}\label{alpha_condition}
    \alpha = \max \left\{ \gamma,\ a, \, 2 + \frac{\theta}{\gamma} \right\},
\end{align}
which ensures that each component of $F$ is nond with respect to its own variable: $F_1(B,I,V)$ is nondecreasing in $B$, $F_2(B,I,V)$ is nondecreasing in $I$, and $F_3(B,I,V)$ is nondecreasing in $V$.
We then define the operator 
\[
T[(B,I,V)] = \big(T_1[(B,I,V)], \, T_2[(B,I,V)], \, T_3[(B,I,V)]\big),
\]
and we will demonstrate that the fixed points of $T$ correspond to travelling wave solutions of the system \eqref{eqn:mainPDE}. 
\subsection{Existence of travelling wave solution for 
\texorpdfstring{$c \ge \bar{c}$}{c >= c-bar}}
\label{sec:existance} We start with showing that the operator $T$ is invariant in $\Gamma$ in the following lemma. 
\begin{lemma}
Let $\bar{c}$ be defined as in \eqref{barc definiton}, and let $c > \bar{c}$. Then the operator 
\[
T : \Gamma \longrightarrow \Gamma
\] 
is invariant on $\Gamma$, i.e., it maps the set $\Gamma$ into itself.
\end{lemma}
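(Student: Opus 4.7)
The plan is to prove the invariance $T(\Gamma)\subseteq\Gamma$ by establishing the two-sided pointwise bound $(\underline B,\underline I,\underline V)\le T[(B,I,V)]\le(\bar B,\bar I,\bar V)$ for every $(B,I,V)\in\Gamma$. The argument rests on three ingredients: the positivity of the integral kernels defining $T_1,T_2,T_3$, the appropriate monotonicity of the modified nonlinearities $F_1,F_2,F_3$ secured by the choice of $\alpha$ in \eqref{alpha_condition}, and the super/subsolution inequalities already established in the two preceding lemmas.

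First I would record that $\delta_1,\sigma_1,\zeta_1<0<\delta_2,\sigma_2,\zeta_2$, so the Green-function kernels in $T_1,T_2,T_3$ are strictly positive, and that $F_1$ is nondecreasing in all three arguments on the range $0\le B\le 1$, $I\le 1$, $V\le\theta/\gamma$ (using $\alpha\ge 2+\theta/\gamma$), while $F_3$ is independent of $B$ and nondecreasing in $I,V$. Crucially, $\partial_B F_2=-V\le 0$, so $F_2$ is nonincreasing in $B$ and nondecreasing in $I,V$; this is the only non-cooperative direction and forces a mixed-monotone comparison below.

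Next I would establish the upper estimate $T_i[(B,I,V)]\le\bar U_i$ componentwise. For $T_1$ and $T_3$, the pointwise monotonicity of $F_1$ and $F_3$ yields $F_i(B,I,V)\le F_i(\bar B,\bar I,\bar V)$, and positivity of the kernel promotes this to $T_i[(B,I,V)]\le T_i[(\bar B,\bar I,\bar V)]$. The upper-solution lemma provides $\mathcal{D}\bar U''-c\bar U'-\alpha\bar U\le -F(\bar U)$ for these components, and substituting into the variation-of-parameters identity gives $T_i[(\bar B,\bar I,\bar V)]\le\bar U_i$, as is standard once the kernel is positive. For $T_2$ the monotonicity reversal in $B$ forces the upper envelope
\[
F_2(B,I,V)\le F_2(\underline B,\bar I,\bar V)=F_2(0,\bar I,\bar V)=(\alpha-a)\bar I+\bar V,
\]
and the stronger form of the upper-solution inequality $c\bar I'-D\bar I''\ge -a\bar I+\bar V$, which is precisely what is actually derived in Step~1 of the upper-solution lemma (since $\underline B=0$), yields $T_2[(0,\bar I,\bar V)]\le\bar I$.

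The lower estimate $T_i[(B,I,V)]\ge\underline U_i$ is symmetric. For $T_1$, $F_1(B,I,V)\ge F_1(0,\underline I,\underline V)=\underline I+\underline V\ge 0=\underline B$, so $T_1\ge 0$ by kernel positivity. For $T_3$, monotonicity and the lower-solution inequality $c\underline V'-\underline V''\le\theta\underline I-\gamma\underline V$ give $T_3\ge\underline V$. For $T_2$, the reversal in $B$ now forces comparison against $F_2(\bar B,\underline I,\underline V)=(\alpha-a)\underline I+(1-\bar B)\underline V$; the required subsolution inequality $c\underline I'-D\underline I''\le -a\underline I+(1-\bar B)\underline V$ follows from the interval-by-interval analysis of the lower-solution lemma together with $\bar B\le 1$ and $\underline V\ge 0$, which shows that the stated bound with $(1-\underline B)\underline V=\underline V$ in fact implies the stronger mixed form with $(1-\bar B)\underline V$ in every region. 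Membership $T[(B,I,V)]\in\chi$ is then automatic, since $T[(B,I,V)]$ is sandwiched between the uniformly bounded upper and lower solutions and is continuous as an integral of bounded data against an integrable kernel.

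The main obstacle I anticipate is the non-cooperative coupling $-\beta CV$: the ordinary monotone iteration does not apply to $T_2$, and a careful mixed-monotone pairing is needed in which the upper bound for $F_2$ is obtained by inserting $\underline B=0$ while the lower bound is obtained by inserting $\bar B$. The verification that the super- and subsolution inequalities in the two preceding lemmas are in fact strong enough to support this asymmetric comparison, rather than only the naïve diagonal comparison, is the delicate point.
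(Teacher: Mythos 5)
Your proposal is correct and follows the same overall skeleton as the paper's proof: positivity of the resolvent kernels, monotonicity of the shifted nonlinearities $F_i$ enforced by the choice of $\alpha$ in \eqref{alpha_condition}, insertion of the super-/subsolution inequalities, and the variation-of-parameters identity to conclude $T_i[\bar U]\le \bar U_i$ and $T_i[\underline U]\ge \underline U_i$. The genuine difference is your treatment of the non-cooperative entry $\partial_B F_2=-V\le 0$. The paper's Step 8 bounds $F_2$ from below by substituting $(\underline B,\underline I,\underline V)$ throughout, i.e.\ it claims $(1-B)V\ge(1-\underline B)\underline V=\underline V$; since $1-B\le 1$ this inequality can fail for $B$ bounded away from $0$ with $V=\underline V>0$, so the diagonal comparison is not justified there. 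Your mixed-monotone pairing $F_2(B,I,V)\ge F_2(\bar B,\underline I,\underline V)=(\alpha-a)\underline I+(1-\bar B)\underline V$ is the correct repair, and your verification that the subsolution inequality survives in this stronger form is sound: on each interval of the lower-solution construction either $\underline V\equiv 0$, or the computation in \eqref{lower_equationI_V_and B_zero} actually yields $c\underline I'-D\underline I''+a\underline I\le 0$, which dominates $-a\underline I+(1-\bar B)\underline V$ since $(1-\bar B)\underline V\ge 0$. Your direct use of the monotonicity of the full nonlinear $F_1$ (rather than first passing to the linearized bound $(\alpha-1)B+I+V$, which implicitly needs $B\le I+V$) is likewise cleaner. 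The one place where you are substantially less explicit than the paper is the step you call ``standard'': the upper and lower solutions are only piecewise $C^2$, and the conclusion $T_i[\bar U]\le\bar U_i$ requires integrating by parts across the corner points $\bar\xi_1,\bar\xi_2,\bar\xi_3$ (resp.\ $\underline\xi_2,\underline\xi_3$) and checking that the resulting boundary terms carry the correct sign, e.g.
\begin{align}
    T_1[\bar U]\;=\;\bar B(\xi)-\frac{e^{\delta_2(\xi-\bar\xi_1)}}{\sqrt{c^2+4\alpha D}}
    \bigl(\bar B'(\bar\xi_1-)-\bar B'(\bar\xi_1+)\bigr)\;\le\;\bar B(\xi),
\end{align}
which uses $\bar B'(\bar\xi_1-)>0=\bar B'(\bar\xi_1+)$. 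This sign condition is automatic for minima (resp.\ maxima) of smooth functions, but it is where the paper spends most of its effort, and a complete write-up of your argument would need to record it.
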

\begin{proof}
    Let $(B, I , V) \in \Gamma$. For the upper bound of $T_1$, using \eqref{linearization_inequality}, we estimate 
    \begin{align}
        T_1[(B,I,V)] \le \dfrac{1}{\sqrt{c^2+4 \alpha D}} \left( \int_{-\infty}^{\xi} e^{(\xi- \eta)\delta_1} {\left(\alpha B-B+I+V \right)}d \eta +  \int_{\xi}^{\infty} e^{(\xi- \eta)\delta_2} {\left((\alpha B-B+I+V\right)}d \eta \right).
    \end{align}
    With the choice of $\alpha$ \eqref{alpha_condition}, the above expression is non-decreasing in $B$, $I,$ and $V$, thus, we can replace $B$, $I$ and $V$ by their upper bounds $(\bar{B},\bar{I},\bar{V})$
       \begin{align}
        T_1[(B,I,V)] \le \dfrac{1}{\sqrt{c^2+D4 \alpha}} \left( \int_{-\infty}^{\xi} e^{(\xi- \eta)\delta_1} {\left(\alpha \bar B-\bar B+\bar I+\bar V \right)}d \eta +  \int_{\xi}^{\infty} e^{(\xi- \eta)\delta_2} {\left((\alpha \bar  B-\bar B+\bar I+\bar V\right)}d \eta \right).
    \end{align}

\textbf{Step 1:} 
For $\xi \le \bar{\xi}_1$  \begin{align}
        T_1[(B,I,V)] \le & \dfrac{1}{\sqrt{c^2+4D \alpha}} \left( \int_{-\infty}^{\xi} e^{(\xi- \eta)\delta_1} {\left(\alpha \bar B-\bar B+\bar I+\bar V \right)}d \eta \right. \\ & + \left. \int_{\xi}^{\bar{\xi}_1} e^{(\xi- \eta)\delta_2} {\left((\alpha \bar  B-\bar B+\bar I+\bar V\right)}d \eta \right. \\&+ \left. \int_{\bar\xi_1}^{\infty} e^{(\xi- \eta)\lambda_2} {\left((\alpha \bar  B-\bar B+\bar I+\bar V\right)}d \eta \right).
    \end{align}
Using the fact that $\bar{B}$ satisfies the inequality \eqref{uppersolutions}, we obtain 
\begin{align}
        T_1[(B,I,V)] \le & \dfrac{1}{\sqrt{c^2+4D \alpha}} \left( \int_{-\infty}^{\xi} e^{(\xi- \eta)\delta_1} {\left(\alpha \bar B+ c\bar B'-D\bar B'' \right)}d \eta \right. \\ & + \left. \int_{\xi}^{\bar{\xi}_1} e^{(\xi- \eta)\delta_2} {\left((\alpha \bar C+ c\bar B'-D\bar B'' \right)}d \eta \right. \\&+ \left. \int_{\bar\xi_1}^{\infty} e^{(\xi- \eta)\delta_2} {\left((\alpha \bar B+ c\bar B'-D\bar B'' \right)}d \eta \right) .\end{align}
Using the integration by parts on the terms $c \bar{B}'e^{\delta_1\eta}$, $c \bar{B}'e^{\delta_2\eta}$, $D \bar{B}''e^{\delta_1\eta}$, and $D \bar{B}''e^{\delta_2\eta}$, yields \begin{align}
        T_1[(B,I,V)] \le & \dfrac{1}{\sqrt{c^2+4D \alpha}} \left( \int_{-\infty}^{\xi} e^{(\xi- \eta)\delta_1} {\underbrace{\left(\alpha + \delta_1 c- \delta^2_1 \right)}_{=0}\bar{B}}d\eta + \left(c \bar{B}- D \bar B' - \delta_1 D \bar{B} \right) e^{\delta_1(\xi- \eta)}\bigg{|}^\xi_{- \infty}  \right. \\ & + \left. \int_{\xi}^{\bar{\xi}_1} e^{(\xi- \eta)\delta_2} {\underbrace{\left(\alpha + \delta_2 c-D \delta^2_2 \right)}_{=0}\bar{B}}d\eta + \left(c \bar{C}- D \bar B' - \delta_2 D \bar{B} \right) e^{\delta_2(\xi- \eta)}\bigg{|}^{\bar{\xi}_1}_{ \xi}  \right. \\&+ \left. \int_{\bar\xi_1}^{\infty} e^{(\xi- \eta)\delta_2} {\underbrace{\left(\alpha + \delta_2 c-D \delta^2_2 \right)}_{=0}\bar{B}}d\eta + \left(c \bar{B}- D \bar B ' - \delta_2 D \bar{B}\right) e^{\delta_2(\xi- \eta)}\bigg{|}^{\infty}_{ \bar\xi_1} \right), \end{align}
where we used the fact that $\delta_1$ and $\delta_2$ are solutions of \eqref{lambda_opperator_def}. Noting that $\delta_2-\delta_1=\dfrac{\sqrt{c^2+4D\alpha}}{D}$, further simplifying yields 
\begin{align}
     T_1[(B,I,V)] \le & \dfrac{1}{\sqrt{c^2+4 D\alpha}} \left( (\delta_2-\delta_1 )D \bar{B} -D \bar{B}'(\bar{\xi}_1-)e^{\delta_2(\xi - \bar{\xi}_1)}+ D \bar{B}'(\bar{\xi}_1+)e^{\delta_2(\xi - \bar{\xi}_1)} \right) \\ =& \bar{B}(\xi) - \dfrac{e^{\delta_2(\xi- \bar{\xi}_1)}}{\sqrt{c^2+4D \alpha}}\left(\underbrace{\bar{B}'(\bar{\xi}_1-)}_{>0} - \underbrace{\bar{B}'(\bar{\xi}_1+)}_{=0} \right) \le \bar{B}(\xi).
     \end{align}
        \textbf{Step 2:} For $\bar{\xi}_1< \xi$\begin{align}
        T_1[(B,I,V)] \le & \dfrac{1}{\sqrt{c^2+4D \alpha}} \left( \int_{-\infty}^{\bar{\xi}_1} e^{(\xi- \eta)\delta_1} {\left(\alpha \bar B-\bar B+\bar I+\bar V \right)}d \eta \right. \\ & + \left. \int_{\bar{\xi}_1}^{\xi} e^{(\xi- \eta)\delta_1} {\left((\alpha \bar  B-\bar B+\bar I+\bar V\right)}d \eta \right. \\&+ \left. \int_{\xi}^{\infty} e^{(\xi- \eta)\delta_2} {\left((\alpha \bar  B-\bar B+\bar I+\bar V\right)}d \eta \right).
    \end{align}
    Inserting the equality from \eqref{uppersolutions}, and applying similar calculations as the last step, yields
    \begin{align}
          T_1[(B,I,V)] \le & \dfrac{1}{\sqrt{c^2+4D \alpha}} \left( \int_{-\infty}^{\bar\xi_1} e^{(\xi- \eta)\delta_1} {\left(\alpha \bar B+ c\bar B'-D\bar B'' \right)}d \eta \right. \\ & + \left. \int_{\bar\xi_1}^{{\xi}} e^{(\xi- \eta)\delta_1} {\left((\alpha \bar B+ c\bar B'-D\bar B'' \right)}d \eta \right. \\&+ \left. \int_{\xi}^{\infty} e^{(\xi- \eta)\delta_2} {\left((\alpha \bar B+ c\bar B'-D\bar B'' \right)}d \eta \right)\\ =& \bar{B}(\xi) - \dfrac{e^{\delta_2(\xi- \bar{\xi}_1)}}{\sqrt{c^2+4D \alpha}}\left(\underbrace{\bar{B}'(\bar{\xi}_1-)}_{>0} - \underbrace{\bar{B}'(\bar{\xi}_1+)}_{=0} \right) \le \bar{B}(\xi).
    \end{align}
For the upper bound of $T_2$ we estimate 
    \begin{align}
        T_2[(B,I,V)] \le \dfrac{1}{\sqrt{c^2+4 \alpha}} \left( \int_{-\infty}^{\xi} e^{(\xi- \eta)\sigma_1} {\left(\alpha  I-a I+ V \right)}d \eta +  \int_{\xi}^{\infty} e^{(\xi- \eta)\sigma_2} {\left((\alpha  I-a I+ V \right)}d \eta \right).
    \end{align}
    The above expression is non-decreasing in $B$, $I,$ and $V$; thus, we can replace $B$, $I$ and $V$ by their upper bounds 
       \begin{align}
        T_2[(B,I,V)] \le \dfrac{1}{\sqrt{c^2+4 D\alpha}} \left( \int_{-\infty}^{\xi} e^{(\xi- \eta)\sigma_1} {\left(\alpha \bar I-a\bar I+\bar V \right)}d \eta +  \int_{\xi}^{\infty} e^{(\xi- \eta)\sigma_2} {\left((\alpha \bar I-a\bar I+\bar V \right)}d \eta \right).
    \end{align}

\textbf{Step 3:} 
For $\xi \le \bar{\xi}_2$  \begin{align}
        T_2[(B,I,V)] \le & \dfrac{1}{\sqrt{c^2+4D \alpha}} \left( \int_{-\infty}^{\xi} e^{(\xi- \eta)\sigma_1} {\left(\alpha \bar I-a\bar I+\bar V  \right)}d \eta \right. \\ & + \left. \int_{\xi}^{\bar{\xi}_2} e^{(\xi- \eta)\sigma_2} {\left((\alpha \bar I-a\bar I+\bar V \right)}d \eta \right. \\&+ \left. \int_{\bar\xi_2}^{\infty} e^{(\xi- \eta)\sigma_2} {\left((\alpha \bar I-a\bar I+\bar V \right)}d \eta \right).
    \end{align}
Using that $\bar{I}$ satisfies the inequality \eqref{uppersolutions} we have 
\begin{align}
        T_2[(B,I,V)] \le & \dfrac{1}{\sqrt{c^2+4D \alpha}} \left( \int_{-\infty}^{\xi} e^{(\xi- \eta)\sigma_1} {\left(\alpha \bar I+ c\bar I'-D\bar I'' \right)}d \eta \right. \\ & + \left. \int_{\xi}^{\bar{\xi}_2} e^{(\xi- \eta)\sigma_2} {\left((\alpha \bar I+ c\bar I'-D\bar I'' \right)}d \eta \right. \\&+ \left. \int_{\bar\xi_2}^{\infty} e^{(\xi- \eta)\sigma_2} {\left((\alpha \bar I+ c\bar I'-D\bar I'' \right)}d \eta \right). \end{align}
Using the integration by parts on the terms $c \bar{I}'e^{\sigma_1\eta}$, $c \bar{I}'e^{\sigma_2\eta}$, $D \bar{I}''e^{\sigma_1\eta}$, and $D \bar{I}''e^{\sigma_2\eta}$
 yields \begin{align}
        T_2[(B,I,V)] \le & \dfrac{1}{\sqrt{c^2+4 D\alpha}} \left( \int_{-\infty}^{\xi} e^{(\xi- \eta)\sigma_1} {\underbrace{\left(\alpha + \sigma_1 c-D \sigma^2_1 \right)}_{=0}\bar{I}}d\eta + \left(c \bar{I}- D \bar I ' - \lambda_1 D \bar{I} \right) e^{\sigma_1(\xi- \eta)}\bigg{|}^\xi_{- \infty}  \right. \\ & + \left. \int_{\xi}^{\bar{\xi}_2} e^{(\xi- \eta)\sigma_2} {\underbrace{\left(\alpha + \sigma_2 c-D \sigma^2_2 \right)}_{=0}\bar{I}}d\eta + \left(c \bar{I}- D \bar I ' - \sigma_2 D \bar{I} \right) e^{\sigma_2(\xi- \eta)}\bigg{|}^{\bar{\xi}_1}_{ \xi}  \right. \\&+ \left. \int_{\bar\xi_2}^{\infty} e^{(\xi- \eta)\sigma_2} {\underbrace{\left(\alpha + \sigma_2 c-D \sigma^2_2 \right)}_{=0}\bar{I}}d\eta + \left(c \bar{I}- D \bar I ' - \sigma_2 D \bar{I} \right) e^{\sigma_2(\xi- \eta)}\bigg{|}^{\infty}_{ \bar\xi_2} \right), \end{align} where we used that $\sigma_1$ and $\sigma_2$ are solutions to \eqref{sigma_opperator_def}. Noting that $\sigma_2-\sigma_1=\dfrac{\sqrt{c^2+4D \alpha}}{D}$ and further simplifying yields 
\begin{align}
     T_2[(B,I,V)] \le & \dfrac{1}{\sqrt{c^2+4 \alpha}} \left( (\sigma_2-\sigma_1 )D \bar{I} -D \bar{I}'(\bar{\xi}_2-)e^{\sigma_2(\xi - \bar{\xi}_2)}+ D \bar{I}'(\bar{\xi}_2+)e^{\sigma_2(\xi - \bar{\xi}_2)} \right) \\ =& \bar{I}(\xi) - \dfrac{e^{\sigma_2(\xi- \bar{\xi}_1)}}{\sqrt{c^2+4D \alpha}}\left(\underbrace{\bar{I}'(\bar{\xi}_2-)}_{>0} - \underbrace{\bar{I}'(\bar{\xi}_2+)}_{=0} \right) \le \bar{I}(\xi).
     \end{align}
           \textbf{Step 4:} For $\bar{\xi}_2< \xi$\begin{align}
        T_2[(B,I,V)] \le & \dfrac{1}{\sqrt{c^2+4D \alpha}} \left( \int_{-\infty}^{\bar{\xi}_2} e^{(\xi- \eta)\sigma_1} {\left(\alpha \bar I-a\bar I+\bar V \right)}d \eta \right. \\ & + \left. \int_{\bar{\xi}_2}^{\xi} e^{(\xi- \eta)\sigma_1} {\left((\alpha \bar I-a\bar I+\bar V \right)}d \eta \right. \\&+ \left. \int_{\xi}^{\infty} e^{(\xi- \eta)\sigma_2} {\left((\alpha \bar I-a\bar I+\bar V \right)}d \eta \right).
    \end{align}
    Inserting the equality from \eqref{uppersolutions} and applying similar calculations as the last step yields
    \begin{align}
          T_2[(B,I,V)] \le & \dfrac{1}{\sqrt{c^2+4D \alpha}} \left( \int_{-\infty}^{\bar\xi_2} e^{(\xi- \eta)\sigma_1} {\left(\alpha \bar I+ c\bar I'-D\bar I'' \right)}d \eta \right. \\ & + \left. \int_{\bar\xi_2}^{{\xi}} e^{(\xi- \eta)\sigma_1} {\left((\alpha \bar I+ c\bar I'-D\bar I'' \right)}d \eta \right. \\&+ \left. \int_{\xi}^{\infty} e^{(\xi- \eta)\sigma_2} {\left((\alpha \bar I+ c\bar I'-D\bar I'' \right)}d \eta \right)\\ =& \bar{I}(\xi) - \dfrac{e^{\sigma_2(\xi- \bar{\xi}_1)}}{\sqrt{I^2+4D \alpha}}\left(\underbrace{\bar{I}'(\bar{\xi}_2-)}_{>0} - \underbrace{\bar{I}'(\bar{\xi}_2+)}_{=0} \right) \le \bar{I}(\xi).
    \end{align}

     For the upper bound of $T_3$ we estimate 
    \begin{align}
        T_3[(B,I,V)] \le \dfrac{1}{\sqrt{c^2+4 \alpha}} \left( \int_{-\infty}^{\xi} e^{(\xi- \eta)\zeta_1} {\left(\alpha  I-\gamma V +\theta I \right)}d \eta +  \int_{\xi}^{\infty} e^{(\xi- \eta)\zeta_2} {\left((\alpha  I-\gamma V +\theta I \right)}d \eta \right).
    \end{align}
    The above expression is non-decreasing in $B$, $I,$ and $V$, thus we can replace $B$, $I$ and $V$ by their upper bounds 
       \begin{align}
        T_3[(B,I,V)] \le \dfrac{1}{\sqrt{c^2+4 \alpha}} \left( \int_{-\infty}^{\xi} e^{(\xi- \eta)\zeta_1} {\left(\alpha \bar I-\gamma\bar V +\theta\bar I  \right)}d \eta +  \int_{\xi}^{\infty} e^{(\xi- \eta)\zeta_2} {\left((\alpha \bar I-\gamma\bar V +\theta\bar I \right)}d \eta \right).
    \end{align}

\textbf{Step 5:} 
For $\xi \le \bar{\xi}_3$  \begin{align}
        T_3[(B,I,V)] \le & \dfrac{1}{\sqrt{c^2+4 \alpha}} \left( \int_{-\infty}^{\xi} e^{(\xi- \eta)\zeta_1} {\left(\alpha \bar I-\gamma\bar V +\theta\bar I \right)}d \eta \right. \\ & + \left. \int_{\xi}^{\bar{\xi}_3} e^{(\xi- \eta)\zeta_2} {\left((\alpha \bar I-\gamma\bar V +\theta\bar I \right)}d \eta \right. \\&+ \left. \int_{\bar\xi_3}^{\infty} e^{(\xi- \eta)\zeta_2} {\left((\alpha \bar I-\gamma\bar V +\theta\bar I  \right)}d \eta \right).
    \end{align}
Using that $\bar{V}$ satisfies the inequality \eqref{uppersolutions} we have 
\begin{align}
        T_3[(B,I,V)] \le & \dfrac{1}{\sqrt{c^2+4 \alpha}} \left( \int_{-\infty}^{\xi} e^{(\xi- \eta)\zeta_1} {\left(\alpha \bar V+ c\bar V'-\bar V'' \right)}d \eta \right. \\ & + \left. \int_{\xi}^{\bar{\xi}_3} e^{(\xi- \eta)\zeta_2} {\left((\alpha \bar V+ c\bar V'-\bar V'' \right)}d \eta \right. \\&+ \left. \int_{\bar\xi_3}^{\infty} e^{(\xi- \eta)\zeta_2} {\left((\alpha \bar V+ c\bar V'-\bar V'' \right)}d \eta \right). \end{align}
Using the integration by parts on the terms $c \bar{V}'e^{\zeta_1\eta}$, $c \bar{V}'e^{\lambda_2\eta}$, $\bar{V}''e^{\zeta_1\eta}$, and $\bar{V}''e^{\zeta_2\eta}$
 yields \begin{align}
        T_3[(B,I,V)] \le & \dfrac{1}{\sqrt{c^2+4 \alpha}} \left( \int_{-\infty}^{\xi} e^{(\xi- \eta)\zeta_1} {\underbrace{\left(\alpha + \zeta_1 c- \zeta^2_1 \right)}_{=0}\bar{V}}d\eta + \left(c \bar{C}-  \bar V ' - \zeta_1  \bar{V} \right) e^{\zeta_1(\xi- \eta)}\bigg{|}^\xi_{- \infty}  \right. \\ & + \left. \int_{\xi}^{\bar{\xi}_3} e^{(\xi- \eta)\zeta_2} {\underbrace{\left(\alpha + \zeta_2 c- \zeta^2_2 \right)}_{=0}\bar{V}}d\eta + \left(c \bar{V}-  \bar V ' - \zeta_2  \bar{V}\right) e^{\zeta_2(\xi- \eta)}\bigg{|}^{\bar{\xi}_3}_{ \xi}  \right. \\&+ \left. \int_{\bar\xi_3}^{\infty} e^{(\xi- \eta)\zeta_2} {\underbrace{\left(\alpha + \zeta_2 c- \zeta^2_2 \right)}_{=0}\bar{V}}d\eta + \left(c \bar{V}-  \bar V ' - \zeta_2  \bar{V} \right) e^{\zeta_2(\xi- \eta)}\bigg{|}^{\infty}_{ \bar\xi_3} \right) \end{align}

where we used that $\zeta_1$ and $\zeta_2$ are solutions to \eqref{zeta_opperator_def}. Nothing that $\zeta_2-\zeta_1=\dfrac{\sqrt{c^2+4\alpha}}{2}$, and further simplifying yields 
\begin{align}
     T_3[(B,I,V)] \le & \dfrac{1}{\sqrt{c^2+4 \alpha}} \left( (\zeta_2-\zeta_1 ) \bar{V} - \bar{V}'(\bar{\xi}_3-)e^{\zeta_2(\xi - \bar{\xi}_3)}+  \bar{V}'(\bar{\xi}_3+)e^{\zeta_2(\xi - \bar{\xi}_3)} \right) \\ =& \bar{V}(\xi) -\dfrac{e^{\zeta_2(\xi- \bar{\xi}_1)}}{\sqrt{V^2+4\alpha}}\left(\underbrace{\bar{V}'(\bar{\xi}_3-)}_{>0} - \underbrace{\bar{V}'(\bar{\xi}_3+)}_{=0} \right) \le \bar{V}(\xi).
     \end{align}
                \textbf{Step 6:} For $\bar{\xi}_3< \xi$\begin{align}
        T_3[(B,I,V)] \le & \dfrac{1}{\sqrt{c^2+4 \alpha}} \left( \int_{-\infty}^{\bar{\xi}_3} e^{(\xi- \eta)\zeta_1} {\left(\alpha \bar V+\theta\bar I-\gamma\bar V \right)}d \eta \right. \\ & + \left. \int_{\bar{\xi}_3}^{\xi} e^{(\xi- \eta)\zeta_1} {\left((\alpha \bar V+\theta\bar I-\gamma\bar V \right)}d \eta \right. \\&+ \left. \int_{\xi}^{\infty} e^{(\xi- \eta)\zeta_2} {\left((\alpha \bar V+\theta\bar I-\gamma\bar V \right)}d \eta \right).
    \end{align}
    Inserting the equality from \eqref{uppersolutions} and applying similar calculations as the last step, yields
    \begin{align}
          T_3[(B,I,V)] \le & \dfrac{1}{\sqrt{c^2+4 \alpha}} \left( \int_{-\infty}^{\bar\xi_3} e^{(\xi- \eta)\zeta_1} {\left(\alpha \bar V+ c\bar V'-\bar V'' \right)}d \eta \right. \\ & + \left. \int_{\bar\xi_3}^{{\xi}} e^{(\xi- \eta)\zeta_1} {\left((\alpha \bar V+ c\bar V' \bar V'' \right)}d \eta \right. \\&+ \left. \int_{\xi}^{\infty} e^{(\xi- \eta)\zeta_2} {\left((\alpha \bar V+ c\bar V'-\bar V'' \right)}d \eta \right)\\ =& \bar{V}(\xi) - \dfrac{e^{\zeta_2(\xi- \bar{\xi}_3)}}{\sqrt{V^2+4 \alpha}}\left(\underbrace{\bar{V}'(\bar{\xi}_3-)}_{>0} - \underbrace{\bar{V}'(\bar{\xi}_3+)}_{=0} \right) \le \bar{V}(\xi).
    \end{align}
\noindent\textbf{Step 7:} For the lower bound of $T_1$ we again note that the condition \eqref{alpha_condition} makes $T_1$ non-decreasing in $B$, thus  
 \begin{align}
        T_1[(B,I,V)] = &\dfrac{1}{\sqrt{c^2+4 \alpha D}} \left( \int_{-\infty}^{\xi} e^{(\xi- \eta)\delta_1} {\left(\alpha {B}-{B}+{B^2}-{BI}-{BV}+I+V \right)}d \eta \right.\\ & \left. +\int_{\xi}^{\infty} e^{(\xi- \eta)\delta_2} {\left((\alpha {B}-{B}+{B^2}-{BI}-{BV}+I+V\right)}d \eta \right)\\ 
      \geq &\dfrac{1}{\sqrt{c^2+D4 \alpha}} \left( \int_{-\infty}^{\xi} e^{(\xi- \eta)\delta_1} {\left(\alpha \underline{B}-\underline{B}+\underline{B}^2-\underline{BI}-\underline{BV} +\underline{I}+\underline{V}\right)}d \eta \right. \\ &\left.  +\int_{\xi}^{\infty} e^{(\xi- \eta)\delta_2} {\left((\alpha \underline{B}-\underline{B}+\underline{B}^2-\underline{BI}-\underline{BV} +\underline{I}+\underline{V}\right)}d \eta \right).
    \end{align}
Inserting the inequality \eqref{lowersolutions} yields 
  \begin{align}
          T_1[(B,I,V)] \geq & \dfrac{1}{\sqrt{c^2+4D \alpha}} \left( \int_{-\infty}^{\xi} e^{(\xi- \eta)\delta_1} {\left(\alpha \underline{B}+ c\underline{B}'-D\underline{B}'' \right)}d \eta \right. \\ & + \left. \int_{\xi}^{\infty} e^{(\xi- \eta)\delta_2} {\left((\alpha \underline{B}+ c\underline{B}'-D\underline{B}'' \right)}d \eta \right.\\ =& 0\ge\underline{B}(\xi).
    \end{align}

\noindent\textbf{Step 8:} For the lower bound of $T_2$, let $\xi < \underline{\xi_2}$, using the inequality \eqref{lower_lemma_inequalitoes} yields
\begin{align}
        T_2[(B,I,V)] = &\dfrac{1}{\sqrt{c^2+4 \alpha D}} \left( \int_{-\infty}^{\xi} e^{(\xi- \eta)\sigma_1} {\left(\alpha I-aI+(1-{B})V \right)}d \eta \right. \\ & \left.+  \int_{\xi}^{\infty} e^{(\xi- \eta)\sigma_2} {\left((\alpha I-aI+(1-{B})V \right)}d \eta \right)\\ 
      \geq& \dfrac{1}{\sqrt{c^2+D4 \alpha}} \left( \int_{-\infty}^{\xi} e^{(\xi- \eta)\sigma_1} {\left(\alpha \underline{I}-a\underline{I}+(1-\underline{B})\underline{V} \right)}d \eta \right. \\ &\left.+  \int_{\xi}^{\underline{\xi}_2} e^{(\xi- \eta)\sigma_2} {\left((\alpha \underline{I}-a\underline{I}+(1-\underline{B})\underline{V}\right)}d \eta \right)\\ 
      \geq &\dfrac{1}{\sqrt{c^2+D4 \alpha}} \left( \int_{-\infty}^{\xi} e^{(\xi- \eta)\sigma_1} {\left(\alpha \underline{I}+c\underline{I}'-D\underline{I}'' \right)}d \eta \right. \\ &\left. +  \int_{\xi}^{\underline{\xi}_2} e^{(\xi- \eta)\sigma_2} {\left((\alpha \underline{I}+c\underline{I}'-D\underline{I}''\right)}d \eta \right)  \\  =& \underline{I}+ \dfrac{e^{\sigma_2(\xi-\xi_2)}}{\sqrt{c^2+D4 \alpha}}\left(\underbrace{c\underline{I}(\underline{\xi}_2)}_{=0}-D \underbrace{\underline{I}'(\underline{\xi}_2-)}_{<0} - \sigma_2 \underbrace {D {\underline{I}(\underline{\xi}_2)}}_{=0} \right) \ge \underline{I}.
    \end{align}
\textbf{Step 9: }Now let $\xi \ge \underline{\xi_2}$ \begin{align}
      T_2[(B,I,V)] \geq& \dfrac{1}{\sqrt{c^2+D4 \alpha}} \left( \int_{-\infty}^{\xi} e^{(\xi- \eta)\sigma_1} {\left(\alpha \underline{I}-a\underline{I}+(1-\underline{B})\underline{V} \right)}d \eta \right. \\ & \left.+  \int_{\xi}^{\infty} e^{(\xi- \eta)\sigma_2} {\left((\alpha \underline{I}-a\underline{I}+(1-\underline{B})\underline{V}\right)}d \eta \right)\\ \geq&\dfrac{1}{\sqrt{c^2+D4 \alpha}} \left( \int_{-\infty}^{\underline{\xi}_1} e^{(\xi- \eta)\sigma_1} {\left(\alpha \underline{I}-a\underline{I}+(1-\underline{B})\underline{V} \right)}d \eta \right)\\ =& \dfrac{1}{\sqrt{c^2+D4 \alpha}}\left(c \underline{I}(\underbrace{\underline{\xi}_2)}_{=0}- D \underbrace{\underline{I}'(\underline{\xi}_2)}_{<0}-\sigma_1D\underbrace{\underline{I}(\underline{\xi}_2)}_{=0}\right) \geq 0 = \underline{I}(\xi).
\end{align}
\textbf{Step 10:} For the lower bound of $T_3$, let $\xi \le \underline{\xi_3}$, we can use the relation in \eqref{lowersolutions} and obtain
\begin{align}
       & T_3[(B,I,V)] = \dfrac{1}{\sqrt{c^2+4 \alpha }} \left( \int_{-\infty}^{\xi} e^{(\xi- \eta)\zeta_1} {\left(\alpha V+\theta I- \gamma V \right)}d \eta +  \int_{\xi}^{\infty} e^{(\xi- \eta)\zeta_2} {\left((\alpha V+\theta I- \gamma V\right)}d \eta \right)\\ 
     & \geq \dfrac{1}{\sqrt{c^2+4 \alpha}} \left( \int_{-\infty}^{\xi} e^{(\xi- \eta)\zeta_1} {\left(\alpha \underline{V}+ \theta \underline{I}- \gamma \underline{V} \right)}d \eta +  \int_{\xi}^{\underline{\xi}_3} e^{(\xi- \eta)\zeta_2} {\left((\alpha \underline{V}+ \theta \underline{I}- \gamma \underline{V}\right)}d \eta \right)\\ 
     & \geq \dfrac{1}{\sqrt{c^2+4 \alpha}} \left( \int_{-\infty}^{\xi} e^{(\xi- \eta)\zeta_1} {\left(\alpha \underline{V}+c\underline{V}'-\underline{V}'' \right)}d \eta +  \int_{\xi}^{\underline{\xi}_3} e^{(\xi- \eta)\zeta_2} {\left((\alpha \underline{V}+c\underline{V}'-\underline{V}''\right)}d \eta \right)  \\ & = \underline{V}+ \dfrac{e^{\zeta_2(\xi-\xi_3)}}{\sqrt{c^2+4 \alpha}}\left( \underbrace{c\underline{V}(\underline{\xi}_3)}_{=0}- \underbrace{\underline{V}'(\underline{\xi}_3-)}_{<0} - \zeta_2 \underbrace { {\underline{V}(\underline{\xi}_3)}}_{=0} \right) \ge \underline{V}(\xi).
    \end{align}
\textbf{Step 11: }Now let $\xi> \underline{\xi_3}$
\begin{align}
      T_3[(B,I,V)] &\geq \dfrac{1}{\sqrt{c^2+4 \alpha}} \left( \int_{-\infty}^{\xi} e^{(\xi- \eta)\zeta_1} {\left(\alpha \underline{V}+ \theta \underline{I}- \gamma \underline{V} \right)}d \eta +  \int_{\xi}^{\infty} e^{(\xi- \eta)\zeta_2} {\left((\alpha \underline{V}+ \theta \underline{I}- \gamma \underline{V}\right)}d \eta \right)\\& \geq \dfrac{1}{\sqrt{c^2+4 \alpha}} \left( \int_{-\infty}^{\underline{\xi}_3} e^{(\xi- \eta)\zeta_1} {\left(\alpha \underline{V}+c\underline{V}'-\underline{V}'' \right)}d \eta \right) \\ &\ge \dfrac{e^{\zeta_2(\xi-\xi_3)}}{\sqrt{c^2+4 \alpha}}\left( \underbrace{c\underline{V}(\underline{\xi}_3)}_{=0}- \underbrace{\underline{V}'(\underline{\xi}_3-)}_{<0} - \zeta_2 \underbrace { {\underline{V}(\underline{\xi}_3)}}_{=0} \right) \ge 0=\underline{V}.
\end{align}

Thus $\forall (B,I, V) \in \Gamma, \ T[(B,I,V)] \in \Gamma$.
\end{proof}
\color{black}
Next, we show that $T$ is continuous on $\Gamma$ with respect to the norm $\|.\|_\mu $ defined in \eqref{norm_definition}.
\begin{lemma}
    Suppose $c > \bar{c}$. Then for sufficiently small $\mu $, the operator $T: \Gamma  \to \Gamma$ is continuous with respect to the weighted norm $\|.\|_\mu$. 
    \end{lemma}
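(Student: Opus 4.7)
The plan is to prove the stronger statement that $T$ is in fact Lipschitz continuous on $\Gamma$ with respect to $\|\cdot\|_\mu$, which immediately implies continuity. The two main ingredients are (i) a uniform Lipschitz bound on the nonlinearities $F_1, F_2, F_3$ over the set $\Gamma$, and (ii) convergence of the resulting weighted integrals for sufficiently small $\mu>0$.

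\textbf{Lipschitz bounds on the $F_j$.} Every $(B,I,V)\in\Gamma$ satisfies the pointwise bounds $0\le B\le 1$, $0\le I\le 1$, and $0\le V\le \theta/\gamma$ inherited from the upper solutions in \eqref{upper_definitions}. Since each $F_j$ is a polynomial of degree at most two in $(B,I,V)$, its partial derivatives are uniformly bounded on the resulting compact box. Hence there exists $L>0$ such that for all $\Phi^{(1)},\Phi^{(2)}\in\Gamma$ and all $\eta\in\mathbb{R}$,
\begin{equation}
|F_j(\Phi^{(1)}(\eta))-F_j(\Phi^{(2)}(\eta))|\;\le\; L\sum_{k=1}^{3}|\phi_k^{(1)}(\eta)-\phi_k^{(2)}(\eta)|\;\le\; 3L\,e^{\mu|\eta|}\,\|\Phi^{(1)}-\Phi^{(2)}\|_\mu.
\end{equation}

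\textbf{Weighted integral estimates.} Substituting this bound into the integral formula for $T_1$ and using the reverse triangle inequality $|\eta|-|\xi|\le|\eta-\xi|$, I obtain
\begin{equation}
\bigl|T_1[\Phi^{(1)}](\xi)-T_1[\Phi^{(2)}](\xi)\bigr|\,e^{-\mu|\xi|}\;\le\;\frac{3L\,\|\Phi^{(1)}-\Phi^{(2)}\|_\mu}{\sqrt{c^2+4\alpha D}}\bigl(\mathcal{I}^{-}+\mathcal{I}^{+}\bigr),
\end{equation}
where $\mathcal{I}^{-}=\int_{-\infty}^{\xi}e^{(\xi-\eta)(\delta_1+\mu)}\,d\eta$ and $\mathcal{I}^{+}=\int_{\xi}^{\infty}e^{(\xi-\eta)(\delta_2-\mu)}\,d\eta$. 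Because $\delta_1<0<\delta_2$, both integrals converge provided $\mu<\min(|\delta_1|,\delta_2)$, in which case $\mathcal{I}^{-}+\mathcal{I}^{+}=(|\delta_1|-\mu)^{-1}+(\delta_2-\mu)^{-1}$, a finite constant independent of $\xi$. The identical argument applied to $T_2$ (with $\sigma_1=\delta_1$, $\sigma_2=\delta_2$) and to $T_3$ (with $\zeta_1,\zeta_2$) yields analogous estimates with the same structure but with the corresponding characteristic roots.

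\textbf{Conclusion and main obstacle.} Choosing $\mu>0$ small enough that $\mu<\min(|\delta_1|,\delta_2,|\zeta_1|,\zeta_2)$ and combining the three componentwise estimates, I conclude
\begin{equation}
\|T[\Phi^{(1)}]-T[\Phi^{(2)}]\|_\mu\;\le\;K(\mu)\,\|\Phi^{(1)}-\Phi^{(2)}\|_\mu
\end{equation}
for a finite constant $K(\mu)$, which establishes Lipschitz continuity (hence continuity) of $T$ on $\Gamma$. The main technical point, and the reason the smallness of $\mu$ is needed, is the competition between the exponential decay of the resolvent kernel ($e^{(\xi-\eta)\delta_1}$ for $\eta<\xi$ and $e^{(\xi-\eta)\delta_2}$ for $\eta>\xi$) and the exponential growth $e^{\mu|\eta-\xi|}$ introduced by the weight in the norm \eqref{norm_definition}; the smallness condition on $\mu$ is precisely what guarantees that the kernel strictly dominates, producing a uniformly bounded operator norm.
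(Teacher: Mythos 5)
Your proposal is correct and follows essentially the same route as the paper: both establish a Lipschitz estimate $\|T[\Phi]-T[\Psi]\|_\mu\le K\|\Phi-\Psi\|_\mu$ by first bounding $|F_j(\Phi)-F_j(\Psi)|$ via the uniform boundedness of components on $\Gamma$, and then controlling the weighted resolvent integrals under the smallness condition $\mu<\min(-\delta_1,\delta_2,-\zeta_1,\zeta_2)$. The only difference is cosmetic: your use of the reverse triangle inequality $|\eta|-|\xi|\le|\eta-\xi|$ yields a single uniform bound on the integrals, whereas the paper evaluates them separately for $\xi<0$ and $\xi\ge 0$; both give the same constant up to relabelling.
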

    \begin{proof}
        Assume $\Phi= (\phi_1, \phi_2, \phi_3)$, $\Psi=(\psi_1, \psi_2, \psi_3) \in \Gamma$ and $\Phi \neq \Psi$. Consider \begin{align}
            &| F_1(\Phi)- F_1(\Psi)| = |\alpha (\phi_1-\psi_1)- (\phi_1-\psi_1)+(\phi_1^2- \psi_1^2) - \phi_1\phi_2+ \psi_1\psi_2- \phi_1\phi_3+ \psi_1\psi_3| \\ & |(\phi_1-\psi_1)(\alpha-1+(\phi_1+\psi_1))- \phi_1\phi_2+ \phi_1\psi_2-\phi_1\psi_2+\psi_1\psi_2- \phi_1\phi_3+ \phi_1\psi_3-\phi_1\psi_3+ \psi_1\psi_3 | \\&= |(\phi_1-\psi_1)(\alpha-1+(\phi_1+\psi_1)- \psi_2-\psi_3)+(\phi_2-\psi_2)(- \phi_1)+ (\phi_3-\psi_3)(-\phi_1)| \\ & \le |\alpha-1+(\phi_1+\psi_1)-\psi_2-\psi_3||\phi_1-\psi_1|+|\phi_1||\phi_2-\psi_2|+|\phi_1||\phi_3-\psi_3| .
        \end{align} 
Since $\Phi$ and $\Psi$ belong to $\Gamma$, all of their components are bounded. Consequently, there exists a constant $\widehat C_1>0$ such that
$|F_1(\Phi)- F_1(\Psi)|e^{-\mu|\xi|} \le \widehat C_1\|\Phi-\Psi\|_\mu$. We have \begin{align}
    |T_1[\Phi]-T_1[\Psi]||e^{- \mu |\xi|}|=& \dfrac{e^{-\mu |\xi|}}{\sqrt{c^2+ 4 D \alpha}}\left|\int_{- \infty}^{\xi}e^{\delta_1(\xi-\eta)}\left(F_1(\Phi)- F_1(\Psi)\right) d \eta \right. \\ & \left. 
 + \int_{\xi}^{\infty }e^{\delta_2(\xi-\eta)}\left(F_1(\Phi)- F_1(\Psi)\right) d \eta  \right| \\ \le& \dfrac{e^{-\mu |\xi|}}{\sqrt{c^2+ 4 D \alpha}}\left(\int_{- \infty}^{\xi}e^{\delta_1(\xi-\eta)+ \mu|\eta|}\left|F_1(\Phi)- F_1(\Psi)\right|e^{-\mu |\eta|} d \eta \right. \\ & \left.
 + \int_{\xi}^{\infty }e^{\delta_2(\xi-\eta)+ \mu|\eta|}\left|F_1(\Phi)- F_1(\Psi)\right|e^{-\mu |\eta|} d \eta  \right) \\ \leq& \dfrac{\widehat{C}_1 e^{-\mu |\xi|}}{\sqrt{c^2+ 4 D \alpha}}\left(\int_{- \infty}^{\xi}e^{\delta_1(\xi-\eta)+ \mu|\eta|} d \eta 
 + \int_{\xi}^{\infty }e^{\delta_2(\xi-\eta)+ \mu|\eta|} d \eta  \right)\|\Phi-\psi\|_\mu.
\end{align}

The integrals are bounded for 
$0 < \mu < \min\{-\delta_1,\, \delta_2\},$
which we assume henceforth. We estimate the integrals separately over the regions 
$\xi < 0$ and $\xi \ge 0$. If $\xi < 0$, then 
\begin{align}
     &  |T_1[\Phi]-T_1[\Psi]||e^{- \mu |\xi|}| \\ &\le\dfrac{\widehat{C}_1 e^{\mu \xi}}{\sqrt{c^2+ 4 D \alpha}}\left( e^{\delta_1 \xi}\int_{- \infty}^{\xi}e^{-(\delta_1+ \mu)\eta} d \eta 
 + e^{\delta_2 \xi}\int_{\xi}^{0 }e^{-(\lambda_2+ \mu)\eta} d \eta + e^{\delta_2 \xi}\int_{0}^{\infty }e^{(\mu-\delta_2)\eta} d \eta\right)\|\Phi-\psi\|_\mu    \\ &= \dfrac{\widehat{C}_1}{\sqrt{c^2+4 D \alpha}} \left( - \dfrac{1}{\mu + \delta_1}+ \dfrac{1-e^{{(\delta_2+\mu)\xi}}}{\mu+ \delta_2}+ \dfrac{e^{(\delta_2+\mu)\xi}}{\delta_2- \mu} \right) \|\Phi-\Psi\|_\mu \\ &\le   \dfrac{\widehat{C}_1}{\sqrt{c^2+4 D \alpha}} \left(\underbrace{ - \dfrac{1}{\mu + \delta_1}+ \dfrac{1}{\mu+ \delta_2}+ \dfrac{1}{\delta_2- \mu}}_{\tilde C_{\xi <0}} \right) \|\Phi-\Psi\|_\mu.
\end{align}
Now, if $\xi \geq 0$, we have \begin{align}
     &  |T_1[\Phi]-T_1[\Psi]||e^{- \mu |\xi|}| \\&\le\dfrac{\widehat{C}_1 e^{-\mu \xi}}{\sqrt{c^2+ 4 D \alpha}}\left( e^{\delta_1 \xi}\int_{- \infty}^{0}e^{-(\delta_1+ \mu)\eta} d \eta 
 + e^{\delta_1 \xi}\int_{0}^{\xi }e^{(\mu-\delta_1)+ \eta} d \eta + e^{\delta_2 \xi}\int_{\xi}^{\infty }e^{(\mu-\delta_2)\eta} d \eta\right)\|\Phi-\psi\|_\mu    \\ &= \dfrac{\widehat{C}_1}{\sqrt{c^2+4 D \alpha}} \left( - \dfrac{e^{(\delta_1-\mu)\xi}}{\mu + \delta_1}+ \dfrac{1-e^{{(\delta_1-\mu)\xi}}}{\mu- \delta_1}+ \dfrac{1}{\delta_2- \mu} \right) \|\Phi-\Psi\|_\mu \\ &\le   \dfrac{\widehat{C}_1}{\sqrt{c^2+4 D \alpha}} \left(\underbrace{ - \dfrac{1}{\mu + \delta_1}+ \dfrac{1}{\mu- \delta_1}+ \dfrac{1}{\delta_2- \mu}}_{\tilde C_{\xi >0}} \right) \|\Phi-\Psi\|_\mu.
\end{align}

Let $C_1=\dfrac{\widehat{C}_1}{\sqrt{c^2+4 D \alpha}}\max\{\tilde C_{\xi>0}, \tilde C_{\xi<0}\}$, thus $|T_1[\Phi]-T_1[\Psi]||e^{- \mu |\xi|}| \le C_1 \|\Phi-\Psi\|_\mu$.

We now establish an analogous estimate for the second component $T_2$. To begin, we examine the integrand
\begin{align}
   \nonumber |F_2(\Phi)- F_2(\Psi)|&=|\alpha(\phi_1-\psi_2)-(\phi_1\phi_3-\psi_1\psi_3)-a(\phi_2-\psi_2)+(\phi_3-\psi_3)|\\ \nonumber & =|(\alpha-a)(\phi_2-\psi_2)-\phi_2\psi_3+\phi_1\psi_3-\phi_1\psi_3+\psi_1\psi_3+(\phi_3-\psi_3)|\\&=|(\phi_1-\psi_1)(-\psi_3)+(\phi_2-\psi_2)(\alpha-a)+(\phi_3-\psi_3)(1-\phi_1)|\\ &\le |\phi_1-\psi_1||\psi_3|+|\phi_2-\psi_2||\alpha-a|+|\phi_3-\psi_3||1-\phi_1|.
\end{align}
        Thus, there exists a constant $\widehat C_2$ such that
\[
\bigl|F_2(\Phi)-F_2(\Psi)\bigr|\, e^{-\mu|\xi|}
   \;\le\; \widehat C_2\,\|\Phi-\Psi\|_\mu .
\]
Turning now to the operator $T_2$, we obtain
\begin{align}
    |T_2[\Phi]-T_2[\Psi]||e^{- \mu |\xi|}|=& \dfrac{e^{-\mu |\xi|}}{\sqrt{c^2+ 4 D \alpha}}\left|\int_{- \infty}^{\xi}e^{\sigma_1(\xi-\eta)}\left(F_2(\Phi)- F_2(\Psi)\right) d \eta 
 \right. \\ & \left. + \int_{\xi}^{\infty }e^{\sigma_2(\xi-\eta)}\left(F_2(\Phi)- F_2(\Psi)\right) d \eta  \right| \\ \le &\dfrac{e^{-\mu |\xi|}}{\sqrt{c^2+ 4 D \alpha}}\left(\int_{- \infty}^{\xi}e^{\sigma_1(\xi-\eta)+ \mu|\eta|}\left|F_2(\Phi)- F_2(\Psi)\right|e^{-\mu |\eta|} d \eta 
 \right. \\ & \left.+ \int_{\xi}^{\infty }e^{\sigma_2(\xi-\eta)+ \mu|\eta|}\left|F_2(\Phi)- F_2(\Psi)\right|e^{-\mu |\eta|} d \eta  \right) \\ \leq &\dfrac{{\widehat C_2} e^{-\mu |\xi|}}{\sqrt{c^2+ 4 D \alpha}}\left(\int_{- \infty}^{\xi}e^{\sigma_1(\xi-\eta)+ \mu|\eta|} d \eta 
 + \int_{\xi}^{\infty }e^{\sigma_2(\xi-\eta)+ \mu|\eta|} d \eta  \right)\|\Phi-\psi\|_\mu.
\end{align}

The integrals are bounded for 
\[
0 < \mu < \min\{-\sigma_{1},\, \sigma_{2}\},
\]
which we assume from now on.  
We estimate the integrals separately on the regions $\xi < 0$ and $\xi \ge 0$.  
If $\xi < 0$, then we have
\begin{align}
     &  |T_2[\Phi]-T_2[\Psi]||e^{- \mu |\xi|}| \\ &\le\dfrac{\widehat{C}_2 e^{\mu \xi}}{\sqrt{c^2+ 4 D \alpha}}\left( e^{\sigma_1 \xi}\int_{- \infty}^{\xi}e^{-(\sigma_1+ \mu)\eta} d \eta 
 + e^{\sigma_2 \xi}\int_{\xi}^{0 }e^{-(\sigma_2+ \mu)\eta|} d \eta + e^{\sigma_2 \xi}\int_{0}^{\infty }e^{(\mu-\sigma_2)\eta} d \eta\right)\|\Phi-\psi\|_\mu    \\ &= \dfrac{\widehat{C}_2}{\sqrt{c^2+4 D \alpha}} \left( - \dfrac{1}{\mu + \sigma_1}+ \dfrac{1-e^{{(\lambda_2+\mu)\xi}}}{\mu+ \sigma_2}+ \dfrac{e^{(\sigma_2+\mu)\xi}}{\sigma_2- \mu} \right) \|\Phi-\Psi\|_\mu \\ &\le   \dfrac{\widehat{C}_2}{\sqrt{c^2+4 D \alpha}} \left(\underbrace{ - \dfrac{1}{\mu + \sigma_1}+ \dfrac{1}{\mu+ \sigma_2}+ \dfrac{1}{\sigma_2- \mu}}_{\tilde C_{\xi <0}} \right) \|\Phi-\Psi\|_\mu.
\end{align}
Now, for the other case $\xi \geq 0$, it gives \begin{align}
     &  |T_2[\Phi]-T_2[\Psi]||e^{- \mu |\xi|}| \\ &\le\dfrac{\widehat{C}_2 e^{-\mu \xi}}{\sqrt{c^2+ 4 D \alpha}}\left( e^{\sigma_1 \xi}\int_{- \infty}^{0}e^{-(\sigma_1+ \mu)\eta} d \eta 
 + e^{\sigma_1 \xi}\int_{0}^{\xi }e^{(\mu-\sigma_1)+ \eta} d \eta + e^{\sigma_2 \xi}\int_{\xi}^{\infty }e^{(\mu-\sigma_2)\eta} d \eta\right)\|\Phi-\psi\|_\mu    \\ &= \dfrac{\widehat{C}_2}{\sqrt{c^2+4 D \alpha}} \left( - \dfrac{e^{(\sigma_1-\mu)\xi}}{\mu + \sigma_1}+ \dfrac{1-e^{{(\sigma_1-\mu)\xi}}}{\mu- \sigma_1}+ \dfrac{1}{\sigma_2- \mu} \right) \|\Phi-\Psi\|_\mu \\ &\le   \dfrac{\widehat{C}_2}{\sqrt{c^2+4 D \alpha}} \left(\underbrace{ - \dfrac{1}{\mu + \sigma_1}+ \dfrac{1}{\mu- \sigma_1}+ \dfrac{1}{\sigma_2- \mu}}_{\tilde C_{\xi >0}} \right) \|\Phi-\Psi\|_\mu.
\end{align}
Let $C_2=\dfrac{\widehat{C}_2}{\sqrt{c^2+4 D \alpha}}\max\{\tilde C_{\xi>0}, \tilde C_{\xi<0}\}$, thus $|T_2[\Phi]-T_2[\Psi]||e^{- \mu |\xi|}| \le C_2 \|\Phi-\Psi\|_\mu$.

Again, we derive a corresponding estimate for the third component $T_{3}$.  
We begin by examining the integrand
\begin{align}
   \nonumber& |F_3(\Phi)- F_3(\Psi)|=|(\alpha+\theta)(\phi_2-\psi_2)-\gamma(\phi_3-\psi_3)|\le |\phi_1-\psi_1||0|+|\phi_2-\psi_2||\theta|+|\phi_3-\psi_3||\gamma|
\end{align}
        Thus, there exists a constant $\widehat C_3$ such that $|F_3(\Phi)-F_3(\Psi)|e^{-\mu|\xi|}\le \widehat{C}_3\|\Phi-\Psi\|_\mu$. Moving to the operator $T_3$ itself, we write
\begin{align}
    |T_3[\Phi]-T_3[\Psi]||e^{- \mu |\xi|}|= &\dfrac{e^{-\mu |\xi|}}{\sqrt{c^2+ 4  \alpha}}\left|\int_{- \infty}^{\xi}e^{\zeta_1(\xi-\eta)}\left(F_3(\Phi)- F_3(\Psi)\right) d \eta \right. \\   & \left.
 + \int_{\xi}^{\infty }e^{\zeta_2(\xi-\eta)}\left(F_3(\Phi)- F_3(\Psi)\right) d \eta  \right| \\ =& \dfrac{e^{-\mu |\xi|}}{\sqrt{c^2+ 4 \alpha}}\left(\int_{- \infty}^{\xi}e^{\zeta_1(\xi-\eta) + \mu|\eta|}\left|F_3(\Phi)- F_3(\Psi)\right|e^{-\mu |\eta|} d \eta 
  \right. + \\ &\left. \int_{\xi}^{\infty }e^{\zeta_2(\xi-\eta)+ \mu|\eta|}\left|F_3(\Phi)- F_3(\Psi)\right|e^{-\mu |\eta|} d \eta  \right) \\ \leq &\dfrac{{\widehat C_3} e^{-\mu |\xi|}}{\sqrt{c^2+ 4 \alpha}}\left(\int_{- \infty}^{\xi}e^{\zeta_1(\xi-\eta)+ \mu|\eta|} d \eta 
 + \int_{\xi}^{\infty }e^{\zeta_2(\xi-\eta)+ \mu|\eta|} d \eta  \right)\|\Phi-\psi\|_\mu.
\end{align}

The integrals are bounded for $0< \mu< \min\{-\zeta_1, \zeta_2\}$ which we assume forthwith. We estimate the integrals separately for $\xi<0$ and $\xi \ge 0$. If $\xi <0$, it follows 
\begin{align}
     &  |T_3[\Phi]-T_3[\Psi]||e^{- \mu |\xi|}| \\ &\le\dfrac{\widehat{C}_3 e^{\mu \xi}}{\sqrt{c^2+ 4  \alpha}}\left( e^{\zeta_1 \xi}\int_{- \infty}^{\xi}e^{-(\zeta_1+ \mu)\eta} d \eta 
 + e^{\zeta_2 \xi}\int_{\xi}^{0 }e^{-(\zeta_2+ \mu)\eta|} d \eta + e^{\zeta_2 \xi}\int_{0}^{\infty }e^{(\mu-\zeta_2)\eta} d \eta\right)\|\Phi-\psi\|_\mu    \\ &= \dfrac{\widehat{C}_3}{\sqrt{c^2+4  \alpha}} \left( - \dfrac{1}{\mu + \zeta_1}+ \dfrac{1-e^{{(\zeta_2+\mu)\xi}}}{\mu+ \zeta_2}+ \dfrac{e^{(\zeta_2+\mu)\xi}}{\zeta_2- \mu} \right) \|\Phi-\Psi\|_\mu \\ &\le   \dfrac{\widehat{C}_3}{\sqrt{c^2+4 \alpha}} \left(\underbrace{ - \dfrac{1}{\mu + \zeta_1}+ \dfrac{1}{\mu+ \zeta_2}+ \dfrac{1}{\zeta_2- \mu}}_{\tilde C_{\xi <0}} \right) \|\Phi-\Psi\|_\mu.
\end{align}
Now, for the other case $\xi \geq 0$, we obtain \begin{align}
     &  |T_3[\Phi]-T_3[\Psi]||e^{- \mu |\xi|}| \\ &\le\dfrac{\hat{C_3} e^{-\mu \xi}}{\sqrt{c^2+ 4  \alpha}}\left( e^{\zeta_1 \xi}\int_{- \infty}^{0}e^{-(\zeta_1+ \mu)\eta} d \eta 
 + e^{\zeta_1 \xi}\int_{0}^{\xi }e^{(\mu-\zeta_1)+ \eta} d \eta + e^{\zeta_2 \xi}\int_{\xi}^{\infty }e^{(\mu-\zeta_2)\eta} d \eta\right)\|\Phi-\psi\|_\mu    \\ &= \dfrac{\hat{C_3}}{\sqrt{c^2+4  \alpha}} \left( - \dfrac{e^{(\zeta_1-\mu)\xi}}{\mu + \zeta_1}+ \dfrac{1-e^{{(\zeta_1-\mu)\xi}}}{\mu- \zeta_1}+ \dfrac{1}{\zeta_2- \mu} \right) \|\Phi-\Psi\|_\mu \\ &\le   \dfrac{\hat{C_3}}{\sqrt{c^2+4  \alpha}} \left(\underbrace{ - \dfrac{1}{\mu + \zeta_1}+ \dfrac{1}{\mu- \zeta_1}+ \dfrac{1}{\zeta_2- \mu}}_{\tilde C_{\xi >0}} \right) \|\Phi-\Psi\|_\mu
\end{align}

Let $C_3=\dfrac{\widehat{C}_3}{\sqrt{c^2+4  \alpha}}\max\{\tilde C_{\xi>0}, \tilde C_{\xi<0}\}$, thus $|T_3[\Phi]-T_3[\Psi]||e^{- \mu |\xi|}| \le C_3 \|\Phi-\Psi\|_\mu$.

Therefore, there exists a constant $C^{*}>0$ such that
\[
\|T[\Phi]-T[\Psi]\|_{\mu}
   \;\le\; C^{*}\,\|\Phi-\Psi\|_{\mu},
\]
and hence $T$ is continuous with respect to the norm $\|\cdot\|_{\mu}$.
    \end{proof}
\begin{lemma}
If $c > \bar{c}$, then $T : \Gamma \to \Gamma$ is compact with respect to the norm $\|\cdot\|_{\mu}$.
\end{lemma}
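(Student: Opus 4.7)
The plan is to verify the hypotheses of the Arzelà--Ascoli theorem on compact sub-intervals and then convert local uniform convergence into convergence in the weighted norm $\|\cdot\|_{\mu}$ by exploiting the decay of the exponential weight $e^{-\mu|\xi|}$. The two ingredients I need are uniform boundedness of $T(\Gamma)$ (already given, since $T(\Gamma)\subset\Gamma$ and $\Gamma$ is sandwiched between the explicit upper and lower solutions) and equicontinuity of $T(\Gamma)$ on $\mathbb{R}$.

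For the equicontinuity step I would differentiate the integral representation of $T_{i}[\Phi]$ directly. Because the integrands agree at the matching point $\eta=\xi$ (both exponentials equal $1$), the boundary contributions cancel and one obtains, for $T_1$,
\[
(T_{1}[\Phi])'(\xi)=\frac{1}{\sqrt{c^{2}+4\alpha D}}\left(\delta_{1}\!\int_{-\infty}^{\xi}\!e^{(\xi-\eta)\delta_{1}}F_{1}(\Phi(\eta))\,d\eta+\delta_{2}\!\int_{\xi}^{\infty}\!e^{(\xi-\eta)\delta_{2}}F_{1}(\Phi(\eta))\,d\eta\right),
\]
and analogous formulas for $T_{2},T_{3}$ with $(\sigma_{1},\sigma_{2})$ and $(\zeta_{1},\zeta_{2})$ respectively. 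Since every $\Phi\in\Gamma$ is pointwise bounded by the explicit upper solution, each $|F_{i}(\Phi(\eta))|\le M$ for a constant $M$ independent of $\Phi$ and $\eta$. Using $\delta_{1}<0<\delta_{2}$ together with the elementary identities
\[
\int_{-\infty}^{\xi}|\delta_{1}|e^{(\xi-\eta)\delta_{1}}\,d\eta=1,\qquad \int_{\xi}^{\infty}\delta_{2}e^{(\xi-\eta)\delta_{2}}\,d\eta=1,
\]
I obtain a uniform pointwise bound $|(T_{i}[\Phi])'(\xi)|\le K$ for all $\xi\in\mathbb{R}$ and all $\Phi\in\Gamma$, with $K$ independent of $\xi$ and $\Phi$. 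The mean value theorem then makes $T(\Gamma)$ equicontinuous on all of $\mathbb{R}$.

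Given a sequence $\{T[\Phi_{n}]\}\subset T(\Gamma)$, I would next apply Arzelà--Ascoli on each compact interval $[-N,N]$ and extract a subsequence via a standard diagonal argument that converges uniformly on every bounded subset of $\mathbb{R}$ to some limit $\Psi\in C(\mathbb{R},\mathbb{R}^{3})$; passing to the limit in the componentwise bound yields $\|\Psi\|_{\infty}\le M$. To upgrade this local uniform convergence to convergence in $\|\cdot\|_{\mu}$, fix $\varepsilon>0$ and choose $N$ so large that $2Me^{-\mu N}<\varepsilon/2$. Then
\[
\sup_{|\xi|>N}|T_{i}[\Phi_{n}](\xi)-\Psi_{i}(\xi)|\,e^{-\mu|\xi|}\le 2Me^{-\mu N}<\varepsilon/2,
\]
while uniform convergence on $[-N,N]$ handles the complementary region once $n$ is large enough. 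Taking the maximum over the three components gives $\|T[\Phi_{n}]-\Psi\|_{\mu}\to 0$, establishing that $T(\Gamma)$ is relatively compact.

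The only genuinely delicate point is making the derivative bound $K$ truly uniform over $\Phi\in\Gamma$, which is where the boundedness of $\Gamma$ (inherited from the explicit upper solutions $\bar B,\bar I,\bar V$) is used in an essential way; the cancellation of boundary terms at $\eta=\xi$ and the elementary exponential integrals above then reduce everything to the single constant $K$. Once this is in place the remainder of the proof is a routine Arzelà--Ascoli plus weight-decay argument of the same flavour as in \cite{ThiessenRyan2026Twst}.
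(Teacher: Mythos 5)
Your proposal is correct and rests on exactly the same estimates as the paper's proof: the uniform bound on $F_i$ over $\Gamma$, the differentiated integral representation with cancelling boundary terms at $\eta=\xi$ giving a uniform derivative bound, Arzel\`a--Ascoli, and the decay of the weight $e^{-\mu|\xi|}$ to control the tails. The only difference is organizational --- you run a direct diagonal argument on $[-N,N]$ plus a tail estimate, whereas the paper packages the same idea as truncated operators $T^n$ (constant outside $[-n,n]$) that are each compact and converge to $T$ in operator norm at rate $e^{-\mu n}$ --- so the two arguments are interchangeable.
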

\begin{proof}
    Let $\Phi \in \Gamma$, then consider 
    \begin{align}
        |F_1(\Phi)|&= |(\alpha-1)\phi_1+\phi_1^2-\phi_1\phi_2-\phi_1\phi_3+\phi_2+\phi_3| \\& =|\phi_1(\alpha-1+\phi_1)+ (1-\phi_1)\phi_2+(1-\phi_1)\phi_3|\\& \le |\alpha+1-\phi_1||\phi_1|+|\phi_2||`1-\phi_1|+|\phi_3||1-\phi_1| \\& \le  |\alpha+1-c_1||\phi_1|+|\phi_2||c_1|+|\phi_3||c_1|  \le M_1,
    \end{align}
since $\phi_1, \phi_2$ and $\phi_3$ are bounded in $\Gamma$. Now consider \begin{align}
    |T_1[\Phi]|&=\dfrac{1}{\sqrt{c^2+ 4 D \alpha}}\left|\int_{- \infty}^{\xi}e^{\delta_1(\xi-\eta)}F_1[\Phi] d \eta 
 + \int_{\xi}^{\infty }e^{\delta_2(\xi-\eta)}F_1[\Phi] d \eta  \right|\\&  \le \dfrac{M_1}{\sqrt{c^2+ 4 D \alpha}}\left|\int_{- \infty}^{\xi}e^{\delta_1(\xi-\eta)} d \eta 
 + \int_{\xi}^{\infty }e^{\delta_2(\xi-\eta)} d \eta  \right|\\& =\dfrac{M_1}{\sqrt{c^2+ 4D \alpha}}\left|\dfrac{-1}{\delta_1}+\dfrac{1}{\delta_2}\right|= \dfrac{M_1}{\sqrt{c^2+ 4D \alpha}}\left|\dfrac{\sqrt{c^2+ 4D \alpha}}{\alpha}\right|= \frac{M_1}{\alpha},
\end{align} where we used the definition of $\delta_1$ and $\delta_2$ from \eqref{lambda_opperator_def}.

Now for the other compartment \begin{align}|F_2(\Phi)|&= |(\alpha-a)\phi_2+(1-\phi_1)\phi_3| \le |\alpha-a||\phi_2|+|1-\phi_1||\phi_3| \le  M_2,
    \end{align}
since $\phi_1, \phi_2$ and $\phi_3$ are bounded in $\Gamma$. Now consider \begin{align}
    |T_2[\Phi]|&=\dfrac{1}{\sqrt{c^2+ 4 D \alpha}}\left|\int_{- \infty}^{\xi}e^{\sigma_1(\xi-\eta)}F_2[\Phi] d \eta 
 + \int_{\xi}^{\infty }e^{\sigma_2(\xi-\eta)}F_2[\Phi] d \eta  \right|\\&  \le \dfrac{M_2}{\sqrt{c^2+ 4 D \alpha}}\left|\int_{- \infty}^{\xi}e^{\sigma_1(\xi-\eta)} d \eta 
 + \int_{\xi}^{\infty }e^{\sigma_2(\xi-\eta)} d \eta  \right|=  \frac{M_2}{\alpha}.
\end{align} 

Similarly, noting that $\phi_2$ and $\phi_3$ are bounded, for the third component we have \begin{align}|F_3(\Phi)|&= |(\alpha-\gamma)\phi_3+\theta\phi_2| \le  M_3.
    \end{align}
Thus, \begin{align}
    |T_3[\Phi]|&=\dfrac{1}{\sqrt{c^2+ 4  \alpha}}\left|\int_{- \infty}^{\xi}e^{\zeta_1(\xi-\eta)}F_3[\Phi] d \eta 
 + \int_{\xi}^{\infty }e^{\zeta_2(\xi-\eta)}F_3[\Phi] d \eta  \right|\\&  \le \dfrac{M_3}{\sqrt{c^2+ 4 \alpha}}\left|\int_{- \infty}^{\xi}e^{\zeta_1(\xi-\eta)} d \eta 
 + \int_{\xi}^{\infty }e^{\zeta_2(\xi-\eta)} d \eta  \right|=  \frac{M_3}{\alpha}.
\end{align}

Now consider 
\begin{align}
\left|\frac{d}{d \xi}T_1[\Phi]\right|&=\left|\dfrac{1}{\sqrt{c^2+ 4 D \alpha}}\left(\delta_1 \int_{- \infty}^{\xi}e^{\delta_1(\xi-\eta)}F_1[\Phi] d \eta 
 + \delta_2\int_{\xi}^{\infty }e^{\delta_2(\xi-\eta)}F_1[\Phi] d \eta \right) \right| \\ & \le \dfrac{M_1}{\sqrt{c^2+ 4 D \alpha}}\left(|\delta_1|\int_{- \infty}^{\xi}e^{\delta_1(\xi-\eta)} d \eta 
 + \delta_2\int_{\xi}^{\infty }e^{\delta_2(\xi-\eta)}  d \eta \right)  \\ & \le \dfrac{M_1}{\sqrt{c^2+ 4 D \alpha}} \left( \frac{|\delta_1|}{-\delta_1}+1\right)=  \dfrac{2M_1}{\sqrt{c^2+ 4 D \alpha}}.
\end{align}
Thus, $\left|\frac{d}{d \xi}T_1[\Phi]\right|\le \frac{2M_1}{\sqrt{c^2+ 4 D \alpha}}$. Similarly, we can find that $\left|\frac{d}{d \xi}T_2[\Phi]\right|\le \frac{2M_2}{\sqrt{c^2+ 4 D \alpha}}$, and $\left|\frac{d}{d \xi}T_3[\Phi]\right|\le \frac{2M_3}{\sqrt{c^2+ 4 \alpha}}$, hence
\begin{align}
    \|\frac{d}{d \xi}T[\phi]\|\le 2 \max \left\{\frac{2M_1}{\sqrt{c^2+ 4 D \alpha}}, \frac{2M_2}{\sqrt{c^2+ 4 D \alpha}}, \frac{2M_3}{\sqrt{c^2+ 4 \alpha}}\right\}.
\end{align}

To obtain a convergent subsequence, we introduce a continuous cut–off function as follows.  
We define the operator
 \begin{align}
    T^n [\Phi](\xi) =  \begin{cases}
       T[\Phi](\xi), & \quad \xi \in [-n,n] \\ 
       T[\Phi](-n), & \quad \xi \in [-\infty,-n] \\T[\Phi](n), & \quad \xi \in [n,\infty] 
    \end{cases}
\end{align}
    which equals with $T$ on the interval $[-n,n]$ and is constant outside this region.

Let $\{\Phi_i\}_{i \ge 0} \subset \Gamma$.  
Since $T$ has uniformly bounded derivatives, the family
\[
T^{n}[\Phi_i](\xi)
\]
is, for each fixed $n$, uniformly bounded and equicontinuous with respect to the norm $\|\cdot\|_{\mu}$.  
By the Arzelà--Ascoli theorem, the sequence $T^{n}[\Phi_i](\xi)$ therefore admits a convergent subsequence.  
Consequently, for each $n$, the operator $T^{n}$ is compact with respect to the norm $\|\cdot\|_{\mu}$.

What remains is to show that, as $n \to \infty$,
$
T^n \longrightarrow T
$
in the operator norm. We have
    \begin{align}
        \|T^n-T\|_{op}&= \underset{\Phi\in \Gamma;\|\Phi\|=1}{\sup} \underset{\xi \in \mathbb{R}}{ \sup } |T^n[\Phi]- T[\Phi]|e^{- \mu |\xi|} \\ &= \underset{\Phi\in \Gamma;\|\Phi\|=1}{\sup} \underset{\xi \in [-\infty,-n]\cup[n, \infty]}{ \sup } |T^n[\Phi]- T[\Phi]|e^{- \mu |\xi|}  \\ &\le \frac{2}{\alpha} \max \{ M_1,M_2,M_3\}e^{-\mu n} \to 0 \quad\text{as} \quad n \to \infty.
    \end{align}
    Thus, by Theorem 3.4.2 in \cite{Hillen_functional}, $T$ is also a compact operator.
\end{proof}
Combining the results of the previous lemmas, we apply Schauder's Fixed Point Theorem \cite{Hillen_functional} to establish the existence of a travelling wave solution for our system of equations \eqref{ODE_B}.
\begin{thm}
    If $c \ge \bar{c}$, then system \eqref{ODE_B} has a non-negative, non-trivial solution $(B(\xi),I(\xi),V(\xi))\in \Gamma $ with $\lim_{\xi \to - \infty}(B(\xi),I(\xi),V(\xi))=(0,0,0)$ and stays bounded.   
\end{thm}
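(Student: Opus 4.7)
The plan is to assemble the ingredients established in the previous three lemmas and invoke Schauder's fixed point theorem on the set $\Gamma \subset \chi$. First I would verify that $\Gamma$ is a non-empty, closed, bounded, convex subset of the Banach space $(\chi, \|\cdot\|_{\mu})$. Non-emptiness follows from the fact that $(\bar B,\bar I,\bar V)\in \Gamma$. Convexity is immediate because $\Gamma$ is defined by componentwise pointwise bounds, and any convex combination of functions lying between the same upper and lower envelopes also lies between them. Boundedness follows from the fact that the upper solutions $(\bar B,\bar I,\bar V)$ are uniformly bounded by $(1,1,\theta/\gamma)$, so $\|\Phi\|_\mu \le \max\{1,\theta/\gamma\}$ for every $\Phi \in \Gamma$. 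Closedness is obtained from the pointwise nature of the inequalities defining $\Gamma$: if $\Phi_n \to \Phi$ in $\|\cdot\|_\mu$, then $\Phi_n(\xi)\to \Phi(\xi)$ pointwise on any compact set, and the pointwise bounds pass to the limit.

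Next I would combine the three lemmas proved above: the invariance lemma gives $T(\Gamma)\subseteq \Gamma$, the continuity lemma gives continuity of $T$ in $\|\cdot\|_\mu$, and the compactness lemma gives that $T$ is compact on $\Gamma$. Schauder's fixed point theorem then yields a fixed point $(B,I,V)\in \Gamma$ of $T$. By construction of $T$ via variation of parameters applied to $\mathcal{D}U''-cU'-\alpha U = -F(U)$, any fixed point of $T$ is a classical solution of the travelling wave ODE system \eqref{ODE_B}. Standard elliptic regularity applied to the integral representation shows that $(B,I,V)\in C^2(\mathbb{R},\mathbb{R}^3)$.

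To conclude, I would verify the qualitative properties claimed in the theorem. Non-negativity of each component follows from $\underline{B}\equiv 0 \le B$, $\underline{I}\le I$, $\underline{V}\le V$ with the lower solutions being non-negative. Non-triviality follows from the fact that the lower solutions $\underline{I}$ and $\underline{V}$ are strictly positive on a non-empty interval (where $\kappa e^{\rho\xi}>e^{(\rho+\epsilon)\xi}$), forcing $I$ and $V$ to be strictly positive there. The left limit $\lim_{\xi\to-\infty}(B,I,V)=(0,0,0)$ is obtained by squeezing: both the upper solutions, which decay like $e^{\rho\xi}$, and the lower solutions, which are either $0$ or dominated by $\kappa e^{\rho\xi}$, tend to $0$ as $\xi\to-\infty$. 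Finally, boundedness as $\xi\to\infty$ follows directly from the uniform bound $B\le 1$, $I\le 1$, $V\le \theta/\gamma$ inherited from the upper solution.

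The main obstacle in this wrap-up is ensuring that the envelope-based bounds in $\Gamma$ genuinely translate into the asymptotic behaviour $(B,I,V)\to(0,0,0)$ at $-\infty$ and bounded behaviour at $+\infty$ in the original $(C,I,V)$ variables. Translating back via $C = 1-B$, the condition $B(-\infty)=0$ yields $C(-\infty)=1$, as required in Theorem~\ref{mainTHeorem}. The case $c=\bar c$ is handled by a limiting argument: for each $c_n \downarrow \bar c$ we obtain a travelling wave $(B_n,I_n,V_n)$; using the uniform bounds from $\Gamma$ and the derivative estimates established in the compactness lemma, a diagonal Arzel\`a--Ascoli argument produces a subsequence converging locally uniformly to a travelling wave at speed $\bar c$, whose limit inherits the required boundary behaviour from the pointwise envelope bounds.
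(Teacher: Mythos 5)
Your proposal is correct and follows essentially the same route as the paper: Schauder's theorem on $\Gamma$ via the invariance, continuity, and compactness lemmas for $c>\bar c$; a limiting Arzelà--Ascoli argument for $c=\bar c$; and squeezing between the exponentially decaying upper solutions and the lower solutions to obtain the boundary behaviour at $\pm\infty$. The only (harmless) divergence is in the non-triviality step, where you argue directly from strict positivity of $\underline{I},\underline{V}$ on an interval, whereas the paper instead rules out the degenerate case $B\equiv 0$ with $(I,V)\not\equiv(0,0)$ by substituting into the first equation of the system.
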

We chose  $\alpha$, defined in \eqref{alpha_condition}, large enough and $\mu
    $, from the definition of the norm \eqref{norm_definition}, small enough. We will prove the theorem in the following steps.
\begin{proof}
    \textbf{Step 1:} For $c > \bar{c}$ defined in \eqref{barc definiton}, the previous lemmas satisfy the requirements of Schauder's fixed point theorem; thus, the operator $T$ has at least one fixed point which we will denote as $(B,I,V) \in \Gamma$ and it satisfies the ODE system \eqref{ODE_B}. 

    \noindent \textbf{Step 2:}For $c=\bar{c}$, we employ the limiting argument of Fang and Zhao \cite{FangJian2009MWfP} (Theorem~3.1). 
Choose a sequence $\{c_n\}_{n\ge1}\subset(\bar{c},\infty)$ such that
\[
\lim_{n\to\infty} c_n=\bar{c}.
\]
For each $c_n$, there exists a travelling–wave solution $(B^n,I^n,V^n)$ of the ODE system \eqref{ODE_B}. 
From the previous lemma, these solutions satisfy the uniform derivative bound
\begin{align*}
\left\|\frac{d}{d\xi}(B^n,I^n,V^n)\right\|
&= \left\|\frac{d}{d\xi}T[(B^n,I^n,V^n)]\right\| \\
&\le 2 \max \left\{
\frac{2M_1}{\sqrt{c_n^2+4D\alpha}},
\frac{2M_2}{\sqrt{c_n^2+4D\alpha}},
\frac{2M_3}{\sqrt{c_n^2+4\alpha}}
\right\} \\
&\le 2 \max \left\{
\frac{2M_1}{\sqrt{\bar c^2+4D\alpha}},
\frac{2M_2}{\sqrt{\bar c^2+4D\alpha}},
\frac{2M_3}{\sqrt{\bar c^2+4\alpha}}
\right\}.
\end{align*}
Moreover, the bounds
\[
0\le B^n\le1,\qquad
0\le I^n\le1,\qquad
0\le V^n\le\frac{\theta}{\gamma}
\]
imply that the sequence $\{(B^n,I^n,V^n)\}$ is uniformly bounded and equicontinuous.
By the Arzelà--Ascoli theorem, there exists a subsequence
$\{(B^{n_k},I^{n_k},V^{n_k})\}$ that converges pointwise to some
$(\tilde B,\tilde I,\tilde V)\in\chi$.
 Consider 
\begin{align}\label{B_k_convergence}
    \lim_{k \to \infty} B^{nk}& = \lim_{k \to \infty} T_1[(B^{nk},I^{nk},V^{nk})] \\ &= \lim_{k \to \infty}\dfrac{1}{\sqrt{c_{nk}^2+ 4 D \alpha}}\left( \int_{- \infty}^{\xi}e^{\sigma_1^{nk}(\xi-\eta)}F_1[(B^{nk},I^{nk},V^{nk})] d \eta \right. \\ & \quad \left.
 + \int_{\xi}^{\infty }e^{\sigma_2^{nk}(\xi-\eta)}F_1[(B^{nk},I^{nk},V^{nk})] d \eta \right) 
\end{align}
where \begin{align}
    \sigma_1^{nk}=\dfrac{c_nk- \sqrt{c_{nk}+ 4D\alpha}}{2 D}, \quad \sigma_2^{nk}=\dfrac{c_nk+ \sqrt{c_{nk}+ 4D\alpha}}{2 D}.
\end{align}
Applying the Lebesgue Dominated Convergence Theorem to the above limit, we obtain
\[
\tilde B = T_{1}\bigl[(\tilde B,\tilde I,\tilde V)\bigr].
\]
Analogous arguments yield
\[
\tilde I = T_{2}\bigl[(\tilde B,\tilde I,\tilde V)\bigr],
\qquad
\tilde V = T_{3}\bigl[(\tilde B,\tilde I,\tilde V)\bigr].
\]
Consequently,
\[
(\tilde B,\tilde I,\tilde V)
= T\bigl[(\tilde B,\tilde I,\tilde V)\bigr],
\]
and hence $(\tilde B,\tilde I,\tilde V)$ is a nonnegative solution of the ODE
system~\eqref{ODE_B}.

\noindent \textbf{Step 3:}
Note that for every $c>\bar c$ $(B,I,V)$ are bounded above by their corresponding upper
solutions. 
\begin{align}
    0 &\le B(\xi) \le \bar{B}(\xi)
      = e^{\xi \rho}\,
        \frac{c\rho + a + 1 - D\rho^{2}}
        {(c\rho + a - D\rho^{2})(c\rho + 1 - D\rho^{2})}, \\
    0 &\le I(\xi) \le \bar{I}(\xi)
      = e^{\xi \rho}\,
        \frac{1}{c\rho + a - D\rho^{2}}, \\
    0 &\le V(\xi) \le \bar{V}(\xi)
      = e^{\xi \rho}.
\end{align}
In particular when $\xi \to - \infty$  we have 
\begin{align}
    0 \le \lim_{\xi\to -\infty} B(\xi)
      &\le \lim_{\xi\to -\infty} \bar{B}(\xi) = 0,\\
    0 \le \lim_{\xi\to -\infty} I(\xi)
      &\le \lim_{\xi\to -\infty} \bar{I}(\xi) = 0,\\
    0 \le \lim_{\xi\to -\infty} V(\xi)
      &\le \lim_{\xi\to -\infty} \bar{V}(\xi) = 0.
\end{align}

\medskip

Now consider the critical case $c = \bar c$.  
Let $\{c_n\}_{n\ge1 } \subset (\bar{c}, c_N]$ for some fixed $N$ be any decreasing sequence with
\[
c_n > \bar c, \qquad \lim_{n\to\infty} c_n = \bar c.
\]
For each $c_n$, let $(B^n, I^n, V^n)$ denote the corresponding travelling–wave
solution. Each component is uniformly bounded above by its upper solution:
\[
0 \le B^n(\xi) \le \bar B^n(\xi), \qquad
0 \le I^n(\xi) \le \bar I^n(\xi), \qquad
0 \le V^n(\xi) \le \bar V^n(\xi).
\]

Since $\bar B^n(\xi) \to 0$ as $\xi\to -\infty$ for each fixed $n$. Now we consider the
pointwise supremum
\[
\overline{\overline{B}}(\xi)
    := \limsup_{n\to\infty} \bar B^n(\xi).
\]
If 
\[
\lim_{\xi\to -\infty} \overline{\overline{B}}(\xi) > 0,
\]
then for each index $n>0$ there would be a $k>n$ such that 
\[
\lim_{\xi\to -\infty} \bar B^{k}(\xi)> 0,
\]
contradicting the fact that $e^{\xi\rho}\to 0$ as $\xi\to -\infty$.  
Hence,
\[
\lim_{\xi\to -\infty} \overline{\overline{B}}(\xi)=0.
\]

Therefore,
\[
0 \le \lim_{\xi\to -\infty} B^{n}(\xi) = 0.
\]
An identical argument applied to $I^n$ and $V^n$ shows that
\[
\lim_{\xi\to -\infty} (B^{n}(\xi), I^{n}(\xi), V^{n}(\xi))
    = (0,0,0).
\]
Thus the limiting travelling wave at $c=\bar c$ connects to $(0,0,0)$ as
$\xi\to -\infty$.

\noindent {\bf Step 4:} Now we consider the other end of the interval i.e. $\xi \to \infty$. For all $c>\bar{c}$,
the corresponding solutions are uniformly bounded between their upper and lower
solutions. To treat the critical case $c=\bar{c}$, let
$\{c_n\}_{n\ge 1}\subset(\bar{c},c_N]$, for some fixed $N$, be a decreasing sequence
such that
\[
c_n>\bar{c}, \qquad \lim_{n\to\infty} c_n=\bar{c}.
\]
For each $c_n$, let $(B^n,I^n,V^n)$ denote the associated travelling–wave solution.
Each component is uniformly bounded by its corresponding upper and lower
solutions, namely
\[
0 \le B^n(\xi) \le \bar B^n(\xi) \le 1, \quad
0 \le \underline{I}_n(\xi) \le I^n(\xi) \le \bar I^n(\xi) \le 1, \quad
0 \le \underline{V}_n(\xi) \le V^n(\xi) \le \bar V^n(\xi) \le \frac{\theta}{\gamma}.
\]

By the convergence result in~\eqref{B_k_convergence}, the sequence
$\{(B^n,I^n,V^n)\}_{n\ge1}$ admits a limit
\[
\lim_{n\to\infty}(B^n,I^n,V^n)=(\tilde B,\tilde I,\tilde V),
\]
and this limit inherits the same bounds. Consequently,
$(\tilde B,\tilde I,\tilde V)$ is a travelling–wave solution of~\eqref{ODE_B} with
wave speed $\bar{c}$.




\noindent {\bf Step 5:} In the last step, we need to show there are no nontrivial solutions with \(B\equiv 0\) and \((I,V)\not\equiv(0,0)\).
Assume, for contradiction, that \(B(\xi)\equiv 0\) for all \(\xi\) while \((I,V)\) is not identically zero.  
Substituting \(B\equiv 0\) into the first equation of \eqref{eqn:mainPDE_B} yields
\[
0 = I(\xi)+V(\xi)\qquad\text{for all }\xi.
\]
In our biological model \(I\) and \(V\) are nonnegative functions. Hence \(I(\xi)\ge 0\) and \(V(\xi)\ge 0\) for all \(\xi\), and the identity \(I+V\equiv 0\) forces \(I\equiv 0\) and \(V\equiv 0\). This contradicts the assumption that \((I,V)\) is not identically zero. Therefore \(B\) cannot be identically zero.

\end{proof}
\section{Numerics}\label{sec:numerics}
In this section we show how our theoretical minimum wave speeds compare to the numerically observed wave speeds. To address this, in Figure \ref{thetawaves_steady}, we present travelling–wave simulations of the model, carried out in \textsc{Matlab} using the \texttt{pdepe} solver.
On the left column of Figure \eqref{thetawaves_steady}, we run the simulations using the parameter values used previously by \cite{arwa}. In that case the parameter $a$ was greater than one and for $\gamma$ they used the value $\frac{40}{3}$. We compare the difference in wave formation for smaller $a$ value in the right column of Figure \ref{thetawaves_steady}. 
\begin{figure}
    \centering
    \begin{subfigure}{0.49\textwidth}
        \centering
        \includegraphics[width=\textwidth]{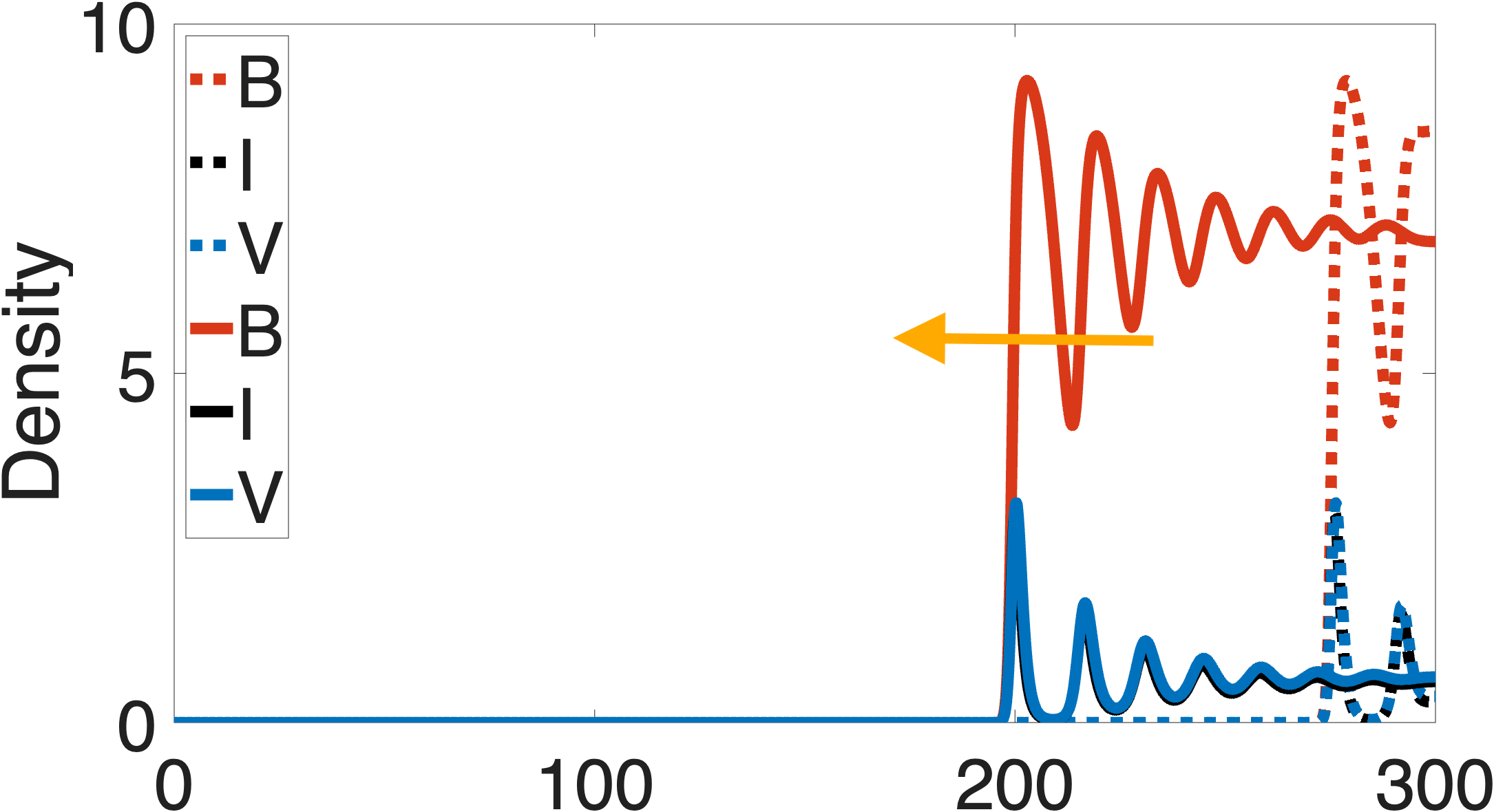}
        \caption{}
        \label{theta25}
    \end{subfigure}
    \begin{subfigure}{0.48\textwidth}
        \centering
\includegraphics[width=\textwidth]{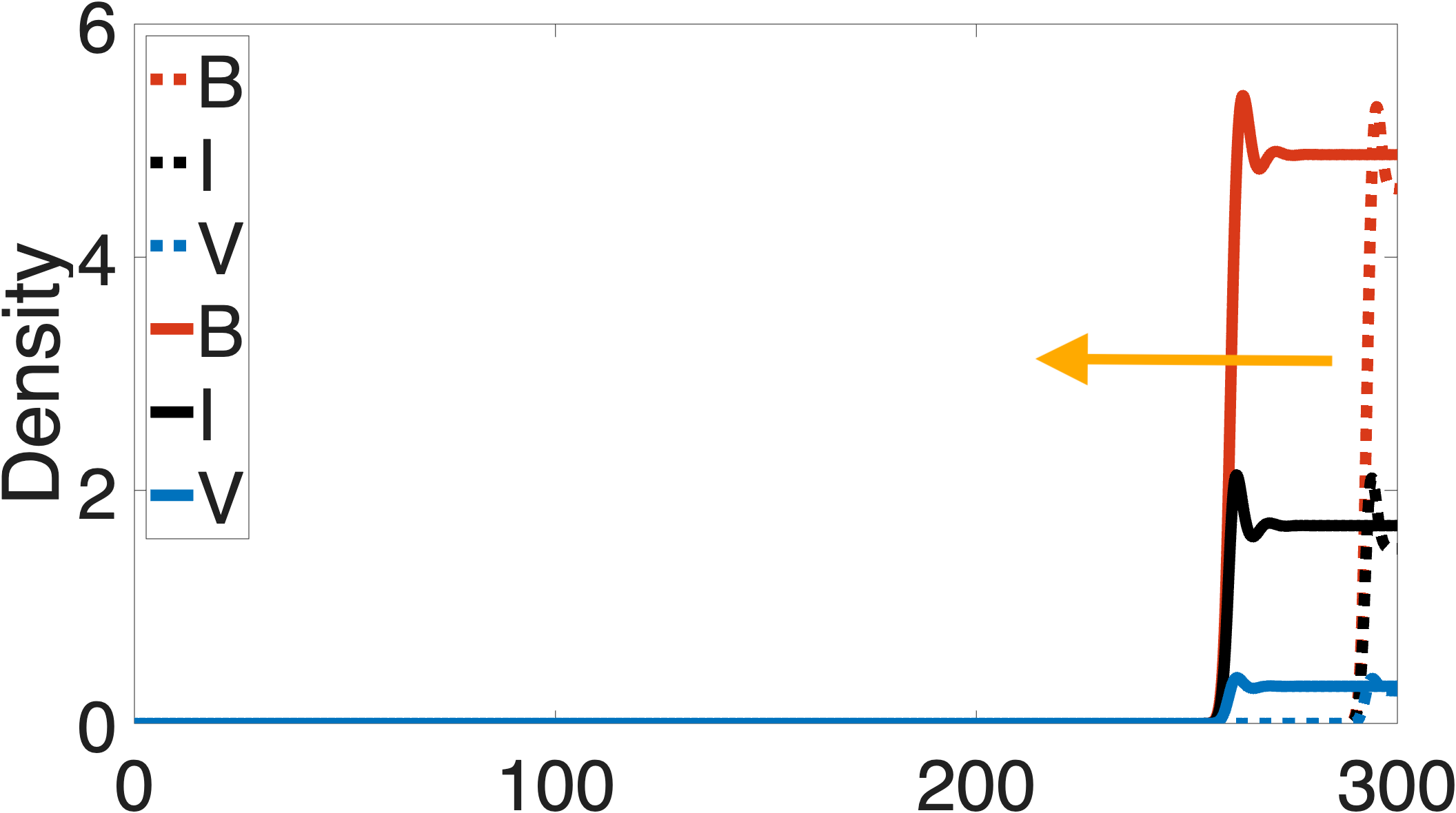}
        \caption{}
        \label{theta25large}
    \end{subfigure}
     \begin{subfigure}{0.49\textwidth}
        \centering
        \includegraphics[width=\textwidth]{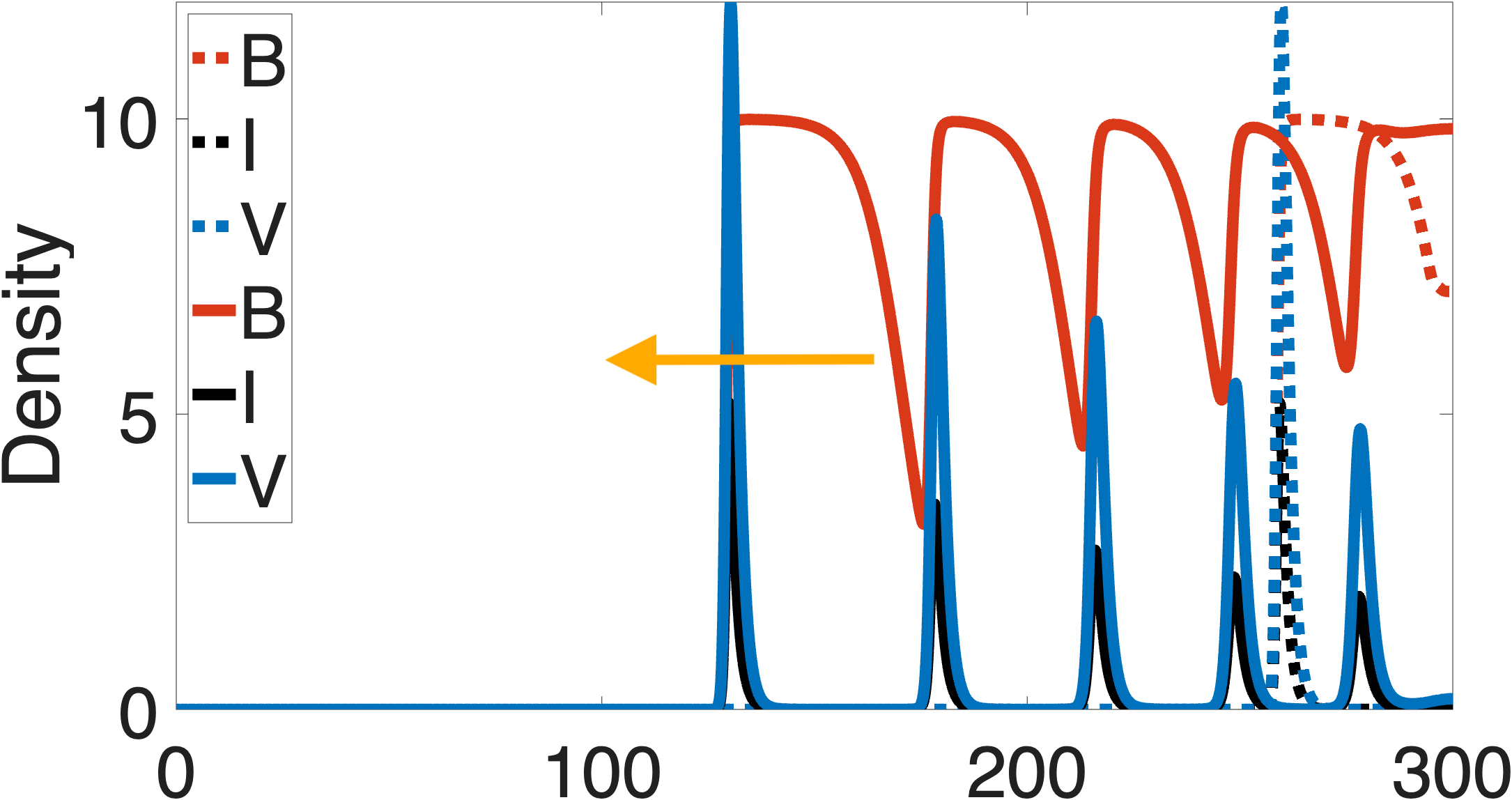}
        \caption{}
        \label{theta150}
    \end{subfigure}
    \begin{subfigure}{0.49\textwidth}
        \centering
        \includegraphics[width=\textwidth]{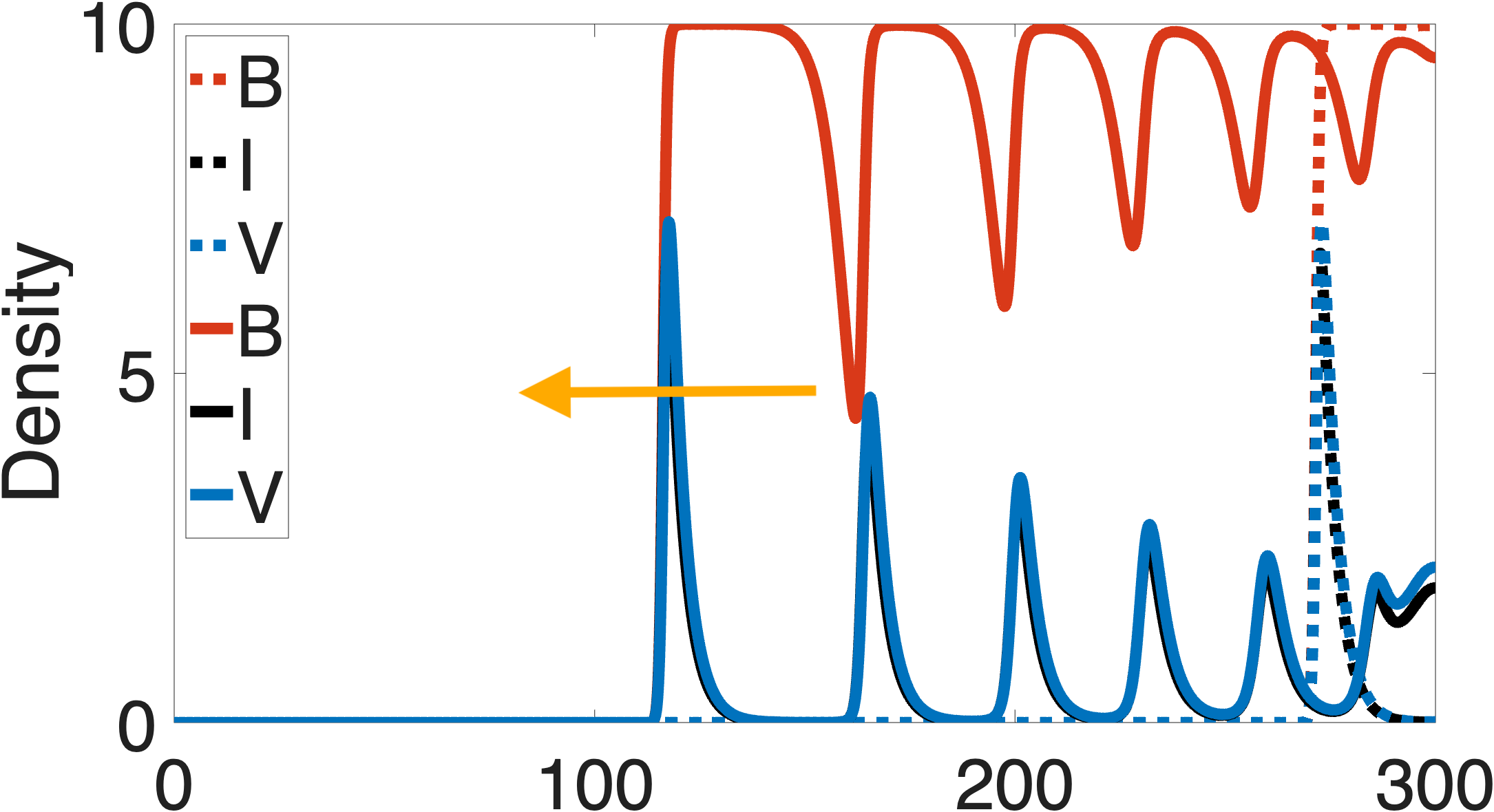}
        \caption{}
        \label{theta150large}
    \end{subfigure}
     \begin{subfigure}{0.49\textwidth}
        \centering
        \includegraphics[width=\textwidth]{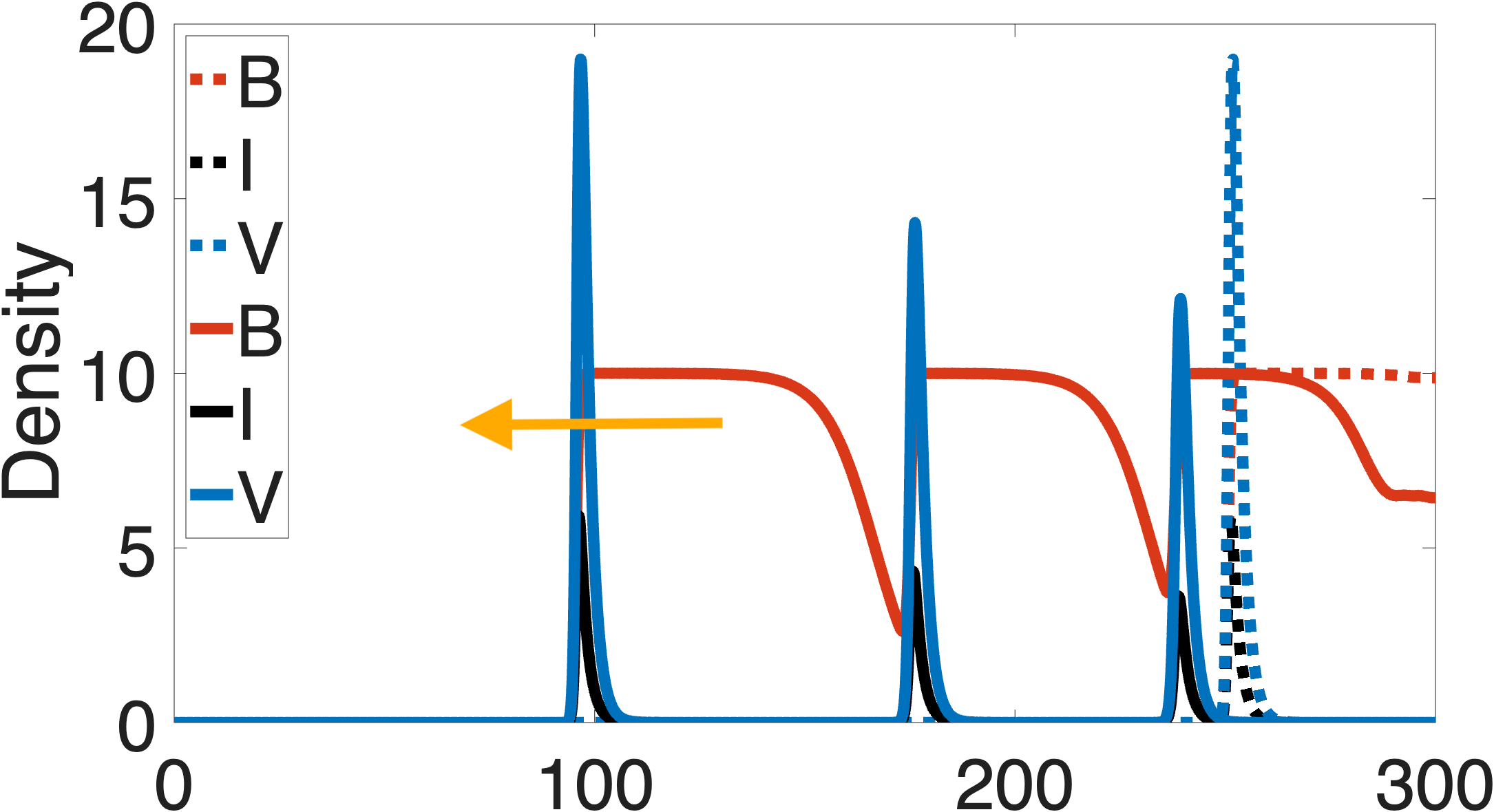}
        \caption{}
        \label{theta250}
    \end{subfigure}
    \begin{subfigure}{0.49\textwidth}
        \centering
        \includegraphics[width=\textwidth]{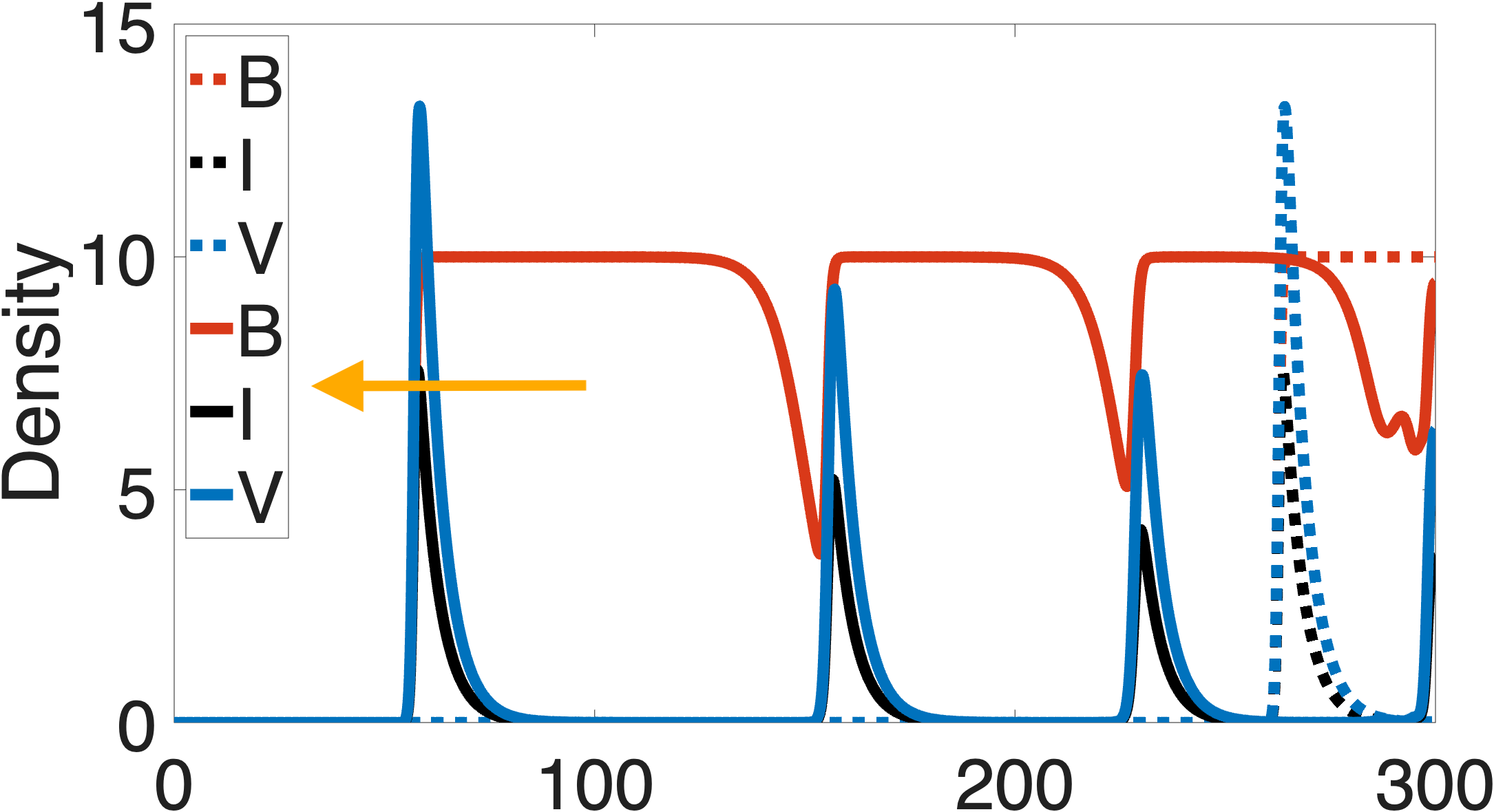}
        \caption{}
        \label{theta250large}
    \end{subfigure}
  \caption{ 
Solutions of the full model \eqref{eqn:main_PDE} for two different values of the parameter $a$. We are showing the solutions as $(B,I,V)$ where $B=1-C$.    
The left column displays simulations for $a=\frac{10}{3}>1$ at times $t = 8$ (dotted lines) and $t = 34$ (solid lines).  
Panel~(a) ($\theta = 150$) exhibits no oscillations, while Panel~(e) ($\theta = 350$) shows oscillatory behaviour has started  
Panel~(c) ($\theta = 500$) illustrates the dynamics in a oscillatory regime with longer oscillations. The right column shows numerical solutions for the case $a<1$ at times $t = 8$ (dotted lines) and $t = 54$ (solid lines), illustrating the formation of oscillatory dynamics behind the travelling wave.  
For visual clarity, the variables $B$ and $I$ have been multiplied by a factor of $10$. 
Panel~(b) ($\theta = 25$) exhibits no oscillations, whereas Panel~(d) ($\theta = 150$) shows oscillations converging toward the coexistence equilibrium.  
Panel~(f) ($\theta = 250$) corresponds to a regime beyond the Hopf bifurcation, where oscillatory patterns arise.
}
    \label{thetawaves_steady}
\end{figure}

We compare the numerical wave speeds $c_N$ with the theoretical one $\bar{c}$. As shown in Table~\ref{tab:speed_comparison}, the numerical speed agrees more closely with the minimum $c_m$  of the theoretical characteristic curve $S(\rho)$ in most cases, for both $a>1$ and $a<1$. In some parameter regimes, the agreement is less precise, but this is expected: the assumptions required for the theorem and for constructing the upper and lower solutions prevent us from fully resolving the interval between $c_m$ and $\tilde{c}$ and a foggy regime exists between the two. This uncertainty contributes to the discrepancy between the numerically observed wave speeds and the theoretical minimum speed $\bar{c}$.

\begin{table}[h!]
\footnotesize
\centering
\begin{tabular}{|c|c|c|c|c|}
\hline
\textbf{Case} & $\theta$ & \textbf{Numerical $c$} & minimum of $S(\rho)$ \textbf{$c_m$} & theoretical speed \textbf{$\bar{c}$} \\
\hline

\multirow{3}{*}{$a<1$}
    & $\theta=25$  & $c_N=0.705$  & $c_m=0.72$   & $\bar c=0.9687$ \\ 
    & $\theta=150$ & $c_N=3.362$  & $c_m=3.2457$ & $\bar c=3.225$  \\
    & $\theta=250$ & $c_N=4.4422$ & $c_m=4.2324$ & $\bar c=4.2324$ \\
\hline

\multirow{3}{*}{$a>1$}
    & $\theta=150$ & $c_N=2.851$  & $c_m=2.8472$ & $\bar c=2.853$  \\
    & $\theta=350$ & $c_N=4.982$  & $c_m=4.6961$ & $\bar c=4.6961$ \\
    & $\theta=500$ & $c_N=4.4422$ & $c_m=5.5403$ & $\bar c=5.5403$ \\
\hline

\end{tabular}
\caption{Comparison of theoretical and numerical wave speeds for $a>1$, and $a<1$.}
\label{tab:speed_comparison}
\end{table}

\section{Conclusion}
In this paper, we study a reaction–diffusion model designed to capture the spatiotemporal dynamics of oncolytic virotherapy (OVT) and tumor cell populations. The model, originally proposed by Baabdulla et al.~\cite{arwa}, describes the interactions between uninfected cancer cells, infected cancer cells, and free virions. A central open question in OVT is whether the virus, when administered as a monotherapy, can spread through and eradicate the tumor before the tumor itself invades the surrounding tissue. Mathematical modelling provides a natural framework to explore this question by comparing the invasion speed of the virus with that of the cancer cells, and by identifying which viral parameters could be enhanced to tip the competition in favour of successful tumor clearance.
In this work, we rigorously establish the existence of traveling–wave solutions for this non-cooperative three-component system, proving that such waves exist for all wave speeds $c\ge \bar{c}$
 . While traveling waves have been extensively studied for both cooperative and non-cooperative systems, the present model poses additional challenges due to its three interacting variables and non-monotone structure, which impose technical restrictions in the construction of upper and lower solutions. As a result, the proof naturally leads to parameter regimes where existence can be established and others where the status of travelling waves remains unclear. These ‘‘foggy’’ regions highlight interesting mathematical questions for future investigation, particularly regarding whether the remaining gaps can be closed by refining the analytical framework.
Finally, to complement the theoretical analysis, we conduct numerical simulations that illustrate the full PDE dynamics, confirm the formation of travelling invasion fronts, and demonstrate how wave propagation depends on the viral parameters. Together, the analytical and numerical results provide a deeper understanding of the mechanisms that influence the success of OVT as a monotherapy and point toward avenues for optimizing viral design.}

\vspace{1.3cm}
\noindent {\bf Acknowledgements.} NM acknowledges the funding support from the University of Alberta. TH is supported through a Discovery Grant of the Natural Science and Engineering Research Council of Canada (NSERC), RGPIN-2023-04269.\\
\\
\noindent
\textbf{Data Availability.} No data were generated or analyzed during the course of this study, so data sharing is not
applicable.
\\
\\
\noindent {\bf Conflict of Interest.}
The authors declare that they have no competing interests.

\bibliographystyle{plainnat}  
\bibliography{ref}  

\end{document}